\documentclass[a4paper,10pt]{article}

\usepackage{graphicx}
\usepackage{float}
\usepackage{hyperref}

\usepackage{amsmath,amsthm,amssymb}
\usepackage{enumerate}
\usepackage{color}
\usepackage{mathrsfs}
\usepackage{underscore}
\allowdisplaybreaks[4]

\renewcommand{\theequation}{\thesection.\arabic{equation}}

\newcommand{\Rmnum}[1]{\expandafter\@slowromancap\romannumeral #1@}

\makeatother

\title{Global boundedness and asymptotic behavior of the chemotaxis system for Alopecia Areata with weakly singular sensitivity}

\author {Pengxue Xiang$^{a}$,\quad Yuebo Cao$^{a}$,\quad Hongying Yang$^{a,}$\thanks {Corresponding author: yanghongying2000@shzu.edu.cn.}\\
\small  $^a$ School of Sciences, Shihezi University, Shihezi 832003, P.R. China
 }
\date{}
\newtheorem{theorem}{Theorem}[section]

\newtheorem{lemma}{Lemma}[section]

\theoremstyle{definition}
\newtheorem{remark}{Remark}[section]
\begin{document}
\baselineskip14pt \maketitle
\renewcommand{\theequation}{\arabic{section}.\arabic{equation}}
\catcode`@=11 \@addtoreset{equation}{section} \catcode`@=12
\noindent{\bf Abstract} 

This paper considers the homogeneous Neumann initial-boundary value problem for Alopecia Areata chemotaxis model with weakly singular sensitivity. For any appropriately regular initial conditions, it is shown that the problem admits a global boundedness of classical solutions in two spatial dimensions. Moreover, through the explicit construction of Lyapunov functions, we establish that the globally bounded solution converges exponentially to a constant steady state. The paper concludes with numerical experiments that serve to visually illustrate and corroborate some of the theoretically derived findings.

\vspace{\baselineskip} 
\noindent{\bf MSC}: 92C17; 92C30; 35K65; 35B45; 35B40

\vspace{\baselineskip} 
\noindent{\bf Keywords}: Chemotaxis; Alopecia areata; Weakly singular sensitivity; Boundedness; Asymptotic behavior

\section{ Introduction}
\hspace*{\parindent}
Alopecia areata (AA) is a common non-scarring hair loss disorder, typically characterized by sudden patchy hair loss on the scalp in round or oval shapes. In severe cases, hair loss may involve the entire scalp or even the whole body. Although the disease is not life-threatening, it can easily lead to psychological issues such as image-related anxiety and depression. Its pathogenesis is complex, primarily involving the erroneous attack of hair follicles by T lymphocytes, which forces hair follicles into the telogen phase. This process involves the interaction among CD4\(^+\) T cells, CD8\(^+\) T cells and interferon-gamma (IFN-\(\gamma\)). IFN-\(\gamma\) is secreted by both CD8\(^+\) and CD4\(^+\) T cells, diffuses and gradually attenuates in the body, while T cells exhibit both random and directed migration characteristics\cite{gilhar2012alopecia,luster1987biochemical}.
In recent years, research on simulating the occurrence and development process of such types of alopecia using mathematical modeling methods has gradually drawn attention. Dobreva et al. \cite{dobreva2020toward} proposed 
\begin{eqnarray}{\label{eq:1.1}}
\left\{
\begin{array}{llll}
u_t = \Delta u -  \chi_1\nabla \cdot (u \nabla w) +w - \mu_1 u^2, & x \in \Omega, t > 0, \\
v_t = \Delta v - \chi_2\nabla \cdot (v \nabla w) +w+ruv -\mu_2v^2, & x \in \Omega,  t > 0,\\
w_t=\Delta w+u+v-w,& x \in \Omega,  t > 0
\end{array}
\right.
\end{eqnarray}
for the unknown functions \(u=u(x,t)\), \(v=v(x,t)\) and \(w=w(x,t)\) denote the densities of CD4\(^+\) T cells, CD8\(^+\) T cells and the concentration of IFN-\(\gamma\), respectively, where \(\Omega\subset \mathbb{R}^n\) \((n\geq1)\) is a bounded domain with smooth boundary, and \(\chi_i\), \(\mu_i\)(\(i=1\), \(2\)), \(r\) are positive parameters. Currently, there have been some studies on the system \eqref{eq:1.1}. For instance, when \(n = 2\), Lou and Tao \cite{lou2021role} demonstrated that the system (\ref{eq:1.1}) admits a globally bounded classical solution for any \(\mu_1\), \(\mu_2>0\), and when \(n = 3\), they established the global boundedness of solutions under the conditions that \(\mu_1 > 8\chi_1^2 + \frac{r}{2} + 16\), \(\mu_2 > 8\chi_2^2 + \frac{r}{2} + 16\) and \(\mu_1\mu_2^2 > \frac{4}{27}r^3\).  Besides, they proved that when
\(\mu_1<\mu_2<3\mu_1\) and \(n=2\) or \(\mu_1<\mu_2<3\mu_1-32\) with \(\mu_1>16\), \(\mu_2>16\) and \(n=3\), if there exists \(\chi_0>0\) such that \(\sqrt{\chi_1^2+\chi_2^2}\leq \chi_0\), then the solutions of system \eqref{eq:1.1} converge to  \( ( \frac{2}{\mu_1}\), \(\frac{2}{\mu_1}\), \(\frac{4}{\mu_1} ) \) as \( t \to \infty \). Subsequently, Zhang et al. \cite{zhang2023global} extended the findings of \cite{lou2021role} to \(n=4\) and $n=5$, assuming sufficiently large values for \(\mu_i(i = 1, 2)\). When the sensitivity function satisfies \(\chi_i(w) \leq K_i(1 + \alpha_1 w)^{-k_i}\)(\( k_i > 1 \),\(i = 1, 2 \)) and \(n \geq 1\), Qiu et al. \cite{qiu2024boundedness} demonstrated that the system \eqref{eq:1.1} possesses a globally bounded classical solution and this solution converges to a constant steady state. When there are singularities in the chemotactic sensitivity function \(\chi(w)=\frac{\chi_i}{w}\)\((i=1,2)\) and \(n=2\), Gao and Xu \cite{gao2024global} established that the system (\ref{eq:1.1}) admits a globally bounded classical solution as long as \( \chi_1\), \(\chi_2 < \frac{\sqrt{5}}{2} \) and this solution convergs to a constant steady state. Later, Tu et al.\cite{tu2024boundedness} showed under the condition \( \mu_1 \geq 2r\), \(\mu_2 \geq 2r\), \(\text{max}\{\chi_1\), \(\chi_2\} < \sqrt\frac{2}{n} \), the system \eqref{eq:1.1} possesses a globally bounded classical solution, which extends the previous boundedness result from \( n = 2 \) to \( n \geq 2 \). When the system \eqref{eq:1.1} takes into account quasilinear diffusion \(D_1(u)\geq u^m\), \(D_2(v)\geq v^l\)(\(m,l\in \mathbb{R}\)) and \(n\geq 1\), Xu et al. \cite{xu2023boundedness}  obtained that the system \eqref{eq:1.1} possesses a globally bounded classical solution under specific conditions \(\mu_2 \geq r\) and \(m\), \(l \geq 1-\frac{2}{n}\). When  \(D_i(s)\geq(s+1)^{\alpha_i}\), nonlinear chemotaxis \(S_i(s)\geq s(s+1)^{\beta_i-1}\), nonlinear logistic source and \(n\geq 1\), Zhang et al.\cite{zhang2024boundedness} asserted that the system \eqref{eq:1.1} admits a global bounded classical solutions without imposing any further restrictions on parameters. When the diffusion rate \(d_i(i=1,2,3)\) are constants, \(\chi_i=\frac{1}{(1+w)^2}\) and $n=1$ or $2$, Zhang and Fu \cite{zhang2025global} showed that the system \eqref{eq:1.1} has a globally bounded classical solution and this solution converges exponentially to the steady state if \(\mu_1 < \mu_2 < 3\mu_1-2\), \(r = \mu_2 - \mu_1\) and \(\frac{1}{16d_3}(\frac{\chi^2_1(0)}{d_1}+\frac{\chi^2_2(0)}{d_2})<\frac{1}{L^2_w}\). Subsequently, when \(\chi_i=\frac{1}{(1+w)^2}\), \(n=2\) and \(r<\text{min}\{3\mu_1\), \(\frac{5}{4}\mu_2\}\), they obtained that the system \eqref{eq:1.1} has a globally bounded classic solution if  transformed the diffusion rates into a function  \(d_i(w)\)\((i=1,2)\), while \(d_3\) remains a constant. Also, they verified the asymptotic behavior of the solution through numerical simulation\cite{zhang2025globalexp}. When the chemotactic sensitivity function possesses singularities \(\chi_i(w)=\frac{1}{w}\),  \(D_i(s)\geq(s+1)^{\alpha_i}\), \(-\mu_1 u^{\gamma_1}\),\(-\mu_2 v^{\gamma_2}\) (\(\gamma_i>2\)) (\(i=1,2\)) and \(n=2\), Zhou et al.\cite{zhou2024boundedness} proved that the system \eqref{eq:1.1} has a globally bounded classical solution and this solution converges to a constant steady state.

For the corresponding parabolic-parabolic-elliptic variant of \eqref{eq:1.1}, Tao and Xu \cite{tao2022combined}  proved that the system \eqref{eq:1.1} has a global and bounded classical solution under the conditions \(\mu_1 - \frac{r}{2} > \frac{(n-2)_+}{n} \left(2\chi_1 + \frac{\chi_2}{2}\right)\) and \(\mu_2 - r > \frac{(n-2)_+}{n} \left(2\chi_2 + \frac{\chi_1}{2}\right)\) for \(n \geq 1\). Moreover, under the conditions \(\mu_1 < \mu_2 < 3\mu_1\), \(r = \mu_2 - \mu_1\) and \(\chi_1^2 + \chi_2^2 < 2\mu_1(3\mu_1 - \mu_2)\), they demonstrated that any global solution converges asymptotically to a constant equilibrium. Later, when \(n\geq 1\), Shan and Zheng \cite{shan2023boundedness} explored the same variant with \(D_i(s)\geq (s+1)^{\alpha_i}\), \(0\leq S(s)\leq s^{\beta_i}\), \(-\mu_1 u^{\eta_i}\), \(-\mu_2 v^{\eta_i}\) (\(\alpha_i \in \mathbb{R}\), \(\beta_i\geq 0\), \(i=1,2\)), the system \eqref{eq:1.1} exists globally bounded classical solutions and this solution converges exponentially to the steady state if  \(\chi\) is appropriately mild. For more the global solutions and large time behavior of the system \eqref{eq:1.1}, the reader can refer to \cite{shan2023stability,song2023spatiotemporal,zhang2024uniform,shan2025global}.

Currently, no research has been conducted on the system \eqref{eq:1.1} with weakly singularity. To gain a better understanding of weakly singularity, we first review the classical Keller-Segel model with weakly singularity
that is, \(\chi(v)=\frac{\chi}{v^k}\), \(k\in(0,1)\), \(\chi>0\), as exemplified by the following chemotaxis model
\begin{eqnarray}{\label{eq:1.2}}
\left\{
\begin{array}{llll}
u_t = \Delta u - \chi\nabla \cdot (\frac{u}{v^k} \nabla v) +ru-\mu u^2, & x \in \Omega, t > 0, \\
\tau v_t = \Delta v -\alpha v+\beta u , & x \in \Omega,  t > 0.
\end{array}
\right.
\end{eqnarray}
When \(\tau=0\) and \(n=2\), Zhao \cite{zhao2023boundedness} proved that the system \eqref{eq:1.2} has a globally bounded classical solution if \(\mu\) is suitably large, without establishing a uniformly positive lower bound for \(v\). when \(n\geq3\),  Kurt \cite{kurt2024large} showed that the \eqref{eq:1.2} has a globally bounded classical solution and this solution exponentially converges to the constant steady state \((\frac{r}{\mu},\frac{\beta}{\alpha},\frac{r}{\mu})\). When\(n\geq 2\), he proved that the system \eqref{eq:1.2} admits a globally uniformly bounded classical solution under the additional assumption \(k<\frac{1}{2}+\frac{1}{n}\)   \cite{kurt2025boundedness}. Later, Le and Kurt\cite{le2025global} established that the system possesses a globally bounded classical solution provided that \(\mu\) is sufficiently large, with the required threshold depending on the value of \(n\). When we consider that the system \eqref{eq:1.2} is fully parabolic, that is, \(\tau=1\) and \(n=2\), Le \cite{le2025absence} proved that, by considering an energy function that allows the singularity of \(v\) near zero to be incorporated into the diffusion effect, the system \eqref{eq:1.2} admits uniformly bounded solutions. Subsequently, when \(\mu\) is sufficiently large, he demonstrated that the system \eqref{eq:1.2} admits a unique nonnegative classical solution, which extends the previous boundedness result from \(n = 2\) to \(n\geq 3\)\cite{le2025globalweak}.

In view of the aforementioned analysis of the classical Keller- Segel model with weakly singularity, in this paper, we consider the following model \eqref{eq:1.3}
\begin{eqnarray}{\label{eq:1.3}}
\left\{
\begin{array}{llll}
u_t = \Delta u - \chi_1\nabla \cdot (\frac{u}{w^k} \nabla w) +w - \mu_1 u^2, & x \in \Omega, t > 0, \\
v_t = \Delta v - \chi_2\nabla \cdot (\frac{v}{w^k} \nabla w) +w+ruv -\mu_2v^2, & x \in \Omega,  t > 0,\\
w_t=\Delta w+u+v-w,& x \in \Omega,  t > 0,\\
\displaystyle \frac{\partial u}{\partial \nu} = \frac{\partial v}{\partial \nu} = \frac{\partial w}{\partial \nu} = 0, & x \in \partial \Omega, \ t > 0, \\
u(x, 0) = u_0(x), \ v(x, 0) = v_0(x), \ w(x, 0) = w_0(x), & x \in \Omega
\end{array}
\right.
\end{eqnarray}
in a bounded domain \(\Omega \subset \mathbb{R}^2\) with a smooth boundary \(\partial\Omega\), where \(\nu\) denotes the outward unit normal vector to \(\partial\Omega\). The parameters \(\chi_i, \, \mu_i \, (i = 1, 2)\), \(r\) are positive and \(k \in (0,1)\). And the initial data \((u_0, v_0, w_0)\) satisfies
\begin{eqnarray}\label{eq:1.4}
\left\{
\begin{array}{llll}
u_0 \in C^0(\bar{\Omega}), \text{ with } u_0 \geq 0 \text{ and } u_0 \neq 0 \text{ in } \bar{\Omega}, \\ 
v_0 \in C^0(\bar{\Omega}), \text{ with } v_0 \geq 0 \text{ in } \bar{\Omega}, \\ 
w_0 \in W^{1,\infty}(\Omega), \text{ with } w_0 \geq 0 \text{ in } \bar{\Omega}.   
\end{array}
\right.
\end{eqnarray} 

We mainly study the globally bounded classical solutions of the system \eqref{eq:1.3} and their asymptotic behavior. To this end, we present the proof outline of this article:

\noindent\textbf{Main ideas.}  We adopt the analytical framework in \cite{le2025absence}, introducing an energy functional 
\begin{align*}
y(t) = \int_{\Omega} u \ln u+\int_{\Omega} v \ln v - \lambda \int_{\Omega} u \ln w - \lambda \int_{\Omega} v \ln w + \frac{1}{2} \int_{\Omega} |\nabla w|^2.    
\end{align*}
Using this, we obtain an \( L \ln L \) bound for both  \( u \) and \( v \) without relying on a lower bound for \( w \).
However, transitioning from this bound to \( L^p \) bounds for \( p > 1 \) presents another challenge: \( w \) can still be arbitrarily close to 0. To address this, we direct our attention towards the energy function for further analysis
\begin{align*}
z(t) = \int_{\Omega} u^p w^{-q}+\int_{\Omega} v^p w^{-q} + \int_{\Omega} u^p +\int_{\Omega} v^p+ \int_{\Omega} |\nabla w|^{2p},    
\end{align*}
where \( 0 < q < p - 1 \), which allows us to absorb the singularity of \( w \) near 0 into the diffusion.
On the other hand, the asymptotic behavior of the solution to \eqref{eq:1.3} can be derived by an analogous estimate in \cite{lou2021role}. The key to achieving this is to establish the following functional
\begin{align*}
\mathcal{F}(t) &:= \int_{\Omega} \left\{ u(\cdot, t) - u_* - u_* \ln \frac{u(\cdot, t)}{u_*} \right\} + \int_{\Omega} \left\{ v(\cdot, t) - v_* - v_* \ln \frac{v(\cdot, t)}{v_*} \right\}\\
&\quad+ 2 \int_{\Omega} \left\{ w(\cdot, t) - w_* - w_* \ln \frac{w(\cdot, t)}{w_*} \right\} \quad \text{for all } t > 0,    
\end{align*}
which satisfies an energy estimate 
\[\frac{d}{dt}\mathcal{F}(t) \leq -\varepsilon_1 \mathcal{E}(t)\]
with \(\mathcal{E}(t) \) defined in Lemma \ref{lem:4.1}. Moreover, we draw on the idea from \cite{2019Jinhaiyang}. We  introduce a function \(\varphi(\omega) := \omega- u_* \ln \omega\) for \(\omega > 0\) and apply L'Hôpital's rule to obtain 
\begin{align*}
\lim_{\omega \to u_*} \frac{\varphi(\omega) - \varphi(u_*)}{(\omega - u_*)^2} = \frac{1}{2u_*}.  
\end{align*}
Through simple analysis and calculation, we have
\begin{align*}
\frac{d}{dt} \mathcal{F}(t) \leq -\varepsilon_2\mathcal{F}(t) \quad \text{for all} \quad t > t_3.   \end{align*}
where \(t_3\) defined in Lemma \eqref{lem:4.4}. Then, using the Gronwall's inequality and Gagliardo–Nirenberg inequality, we can show that the solution of the system \eqref{eq:1.3} converges exponentially to the steady state.
Then, we present the following two theorems below:
\begin{theorem}\label{thm:1.1}
Let \( \Omega \subset \mathbb{R}^2 \) be a bounded domain with smooth boundary and \( \mu_1, \mu_2 , r > 0 \). Assume that \( \chi_i>0 \) (\( i = 1, 2 \)), \(k \in (0,1)\). Assume \(u_0\), \(v_0\) and \(w_0\) satisfy \eqref{eq:1.4}. The system \eqref{eq:1.3} possesses a unique global classical solution \( (u, v, w) \) which is bounded in \(\Omega\times(0,\infty)\), that is, there exists a constant \( C > 0 \) such that  
\[
\|u(\cdot, t)\|_{L^\infty(\Omega)} + \|v(\cdot, t)\|_{L^\infty(\Omega)} + \|w(\cdot, t)\|_{W^{1,\infty}(\Omega)} \leq C \quad \text{for all} \quad t > 0.
\]
\end{theorem}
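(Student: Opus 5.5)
Following the outline given in the main ideas, I would start from a standard local existence result: fixed-point arguments together with parabolic regularity give a unique nonnegative classical solution on $\Omega\times(0,T_{\max})$. If $T_{\max}<\infty$, then $\|u\|_{L^\infty}+\|v\|_{L^\infty}+\|w\|_{W^{1,\infty}}\to\infty$. Because $u_0\not\equiv0$ and $w_t-\Delta w+w=u+v\ge0$, the strong maximum principle gives $w>0$ for $t>0$, so the sensitivity $w^{-k}$ makes sense. This only yields a local-in-time lower bound for $w$, not a uniform one.

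Next I would collect the elementary bounds. Integrating the $u$-equation and using $\int u^2\ge|\Omega|^{-1}(\int u)^2$ gives
\[
\frac{d}{dt}\int u\le\int w-\frac{\mu_1}{|\Omega|}\Bigl(\int u\Bigr)^2 .
\]
For the $v$-equation, Young's inequality gives $ruv\le \frac{\mu_2}{2}v^2+\frac{r^2}{2\mu_2}u^2$. So for a weighted sum $A\int u+\int v+\int w$ with $A$ large, the quadratic terms absorb the linear ones. This yields $\sup_t(\|u\|_{L^1}+\|v\|_{L^1}+\|w\|_{L^1})\le C$ and $\int_t^{t+1}\int(u^2+v^2)\le C$. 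Then, exactly as in \cite{le2025absence}, I would differentiate
\[
y(t)=\int u\ln u+\int v\ln v-\lambda\int (u+v)\ln w+\tfrac12\int|\nabla w|^2 .
\]
The key point is the cross term. The term $-\lambda\int u\ln w$ produces $\lambda\int \nabla u\cdot\nabla w/w$ and $-\lambda\int u|\nabla w|^2/w^2$. Together with $\int|\nabla u|^2/u$ and the chemotactic term $\chi_1\int w^{-k}\nabla u\cdot\nabla w$, completing squares absorbs these. Where $w\le1$ one uses $w^{-k}\le w^{-1}$; where $w\ge1$ one uses $w^{-k}\le1$ and controls the cross term via $\int u|\nabla w|^2$ and the Gagliardo--Nirenberg/Brezis--Gallouet-type estimate in 2D. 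Here $-\lambda\int u\ln w\ge -\lambda\int u\ln_+ w$ is bounded using the $L^1$ bounds. The term $-\lambda\int(u+v)w_t/w$ contains $-\lambda\int (u+v)^2/w$, which has a good sign. The logistic terms dominate the remaining terms such as $\int w\ln u$ and $ruv\ln v$. This produces an ODI $y'+y\le C$, and hence $\sup_t\int (u\ln u+v\ln v)\le C$ and $\sup_t\|\nabla w\|_{L^2}\le C$.

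Then I would pass to $L^p$ with
\[
z(t)=\int (u^p+v^p)w^{-q}+\int(u^p+v^p)+\int|\nabla w|^{2p},\qquad 0<q<p-1 .
\]
Differentiating $\int u^pw^{-q}$ gives the diffusion quantity $\int u^{p-2}w^{-q}|\nabla u|^2$ and a favourable term $-q(q+1)\int u^p w^{-q-2}|\nabla w|^2$ (up to cross terms). The chemotaxis term, bounded by $\int u^{p-1}w^{-q-k}|\nabla u||\nabla w|$, is split into $w\le1$ and $w>1$. On $\{w\le 1\}$ it is absorbed into these two quadratic forms, which is possible since $q<p-1$ makes the matrix positive definite for suitable $q$ relative to $\chi_1$ after a rescaling trick. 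On $\{w>1\}$ the singularity disappears, and the term is handled by the unweighted $\int u^p$ part via $\int u^p|\nabla w|^2\le\varepsilon\int|\nabla u^{p/2}|^2+\ldots$. The term $w_t$ contributes $-q\int u^pw^{-q-1}(u+v)\le0$ plus a harmless $+q\int u^pw^{-q}$. For $\int|\nabla w|^{2p}$ one uses the standard identity with $\Delta|\nabla w|^2$ and the boundary term estimate. The 2D Gagliardo--Nirenberg inequality with the $L\ln L$ bound then closes the estimate, giving $\sup_t(\|u\|_{L^p}+\|v\|_{L^p})\le C$ for some $p>2$ (in fact any $p$). Finally, $w\in W^{1,\infty}$ follows from semigroup estimates, and a Moser iteration or Neumann heat semigroup argument gives $L^\infty$ bounds for $u$ and $v$. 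The term $w^{-k}\nabla w$ needs a lower bound for $w$; using $w\ge e^{-t}\min w_0$ plus the $L^1$-mass of $u$ gives $\inf w\ge\delta>0$ for $t\ge1$ by the heat-kernel lower bound, because $\int u$ is bounded below from the ODE for mass. Extensibility then gives global boundedness.

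The main obstacle is the $z(t)$ step, namely absorbing the singular cross term near $w=0$ without a lower bound on $w$. I would first try choosing $q$ close to $p-1$ and Young with a parameter, and then check the positivity of the resulting quadratic form in $(\nabla u/u,\nabla w/w)$.
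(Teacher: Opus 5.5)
Up to the $L^p$ bounds your plan is the paper's. It uses the $-\lambda\int_\Omega(u+v)\ln w$ energy, then the functional with weight $w^{-q}$, $0<q<p-1$, the $|\nabla w|^{2p}$ estimate, and the $L\ln L$-refined Gagliardo--Nirenberg inequality. The final step, however, has a genuine gap. You claim the flux $uw^{-k}\nabla w$ needs a uniform positive lower bound for $w$, and you get it from a uniform lower bound on $\int_\Omega u$ ``from the ODE for mass''. The identity $\frac{d}{dt}\int_\Omega u=\int_\Omega w-\mu_1\int_\Omega u^2$ bounds the mass from below only if $\int_\Omega u^2$ is controlled superlinearly by $\int_\Omega u$. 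Your $L^p$ bounds give only $\int_\Omega u^2\le C(\int_\Omega u)^{\frac{p-2}{p-1}}$. The resulting inequalities $\frac{d}{dt}\int_\Omega u\ge\int_\Omega w-C(\int_\Omega u)^{\frac{p-2}{p-1}}$ and $\frac{d}{dt}\int_\Omega w\ge\int_\Omega u-\int_\Omega w$ are consistent with all masses tending to $0$. Anything stronger amounts to Harnack-type control of $u$, essentially the $L^\infty$ bound you are trying to prove, so the argument is circular. The term $e^{-t}\min w_0$ does not help either, since \eqref{eq:1.4} allows $\min w_0=0$.

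Avoiding this lower bound is the whole point of the weighted functional, and your own $z$-estimate already suffices. It bounds $\sup_t\int_\Omega u^pw^{-q}$ for every $0<q<p-1$, so take $p>\max\{2,\frac1{1-k}\}$ and $q=kp$, which is admissible because $kp<p-1$. Lemma~\ref{lem:2.2} then gives $\sup_t\|\nabla w\|_{L^\infty}<\infty$, hence
\[
\sup_{t}\Bigl\|\frac{u\nabla w}{w^{k}}\Bigr\|_{L^p(\Omega)}\le\sup_{t}\|\nabla w\|_{L^\infty(\Omega)}\Bigl(\sup_{t}\int_\Omega u^pw^{-kp}\Bigr)^{1/p}<\infty,
\]
and likewise for $v$. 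This is exactly what the Neumann-semigroup estimate needs (Lemma~\ref{lem:3.8} and the proof of the theorem). The bound for $v$ then follows because $ruv-\mu_2v^2$ is bounded above once $u$ is bounded.

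A second, fixable problem concerns absorbing the singular cross terms. Splitting at $w=1$ with $w^{-k}\le w^{-1}$ fails for large $\chi_i$. In the $z$-step, put $s:=w^{1-k}\le 1$. The quadratic form in $\bigl(u^{\frac p2-1}w^{-\frac q2}|\nabla u|,\,u^{\frac p2}w^{-\frac q2-1}|\nabla w|\bigr)$ from the diffusion, weight and chemotaxis terms is nonpositive if and only if $\chi_1^2s^2\le\frac{4q(p-1-q)}{p(p-1)^2}$. The right-hand side is at most $\frac1p$ and tends to $0$ as $q\to p-1$, so taking $q$ close to $p-1$ makes things worse. The $L\ln L$ step likewise requires $\chi_1^2\le4\lambda(1-\lambda)\le1$. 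The remedy is to split at a small level $\delta=\delta(\chi_1,\chi_2,p,q)$. Equivalently, rescale $w\mapsto w/\lambda$, which turns $\chi_i$ into $\chi_i\lambda^{1-k}$; this is the paper's Young inequality $w^{-2k}\le\eta w^{-2}+C_\eta$. Because $k<1$, the chemotactic cross term becomes an arbitrarily small multiple of $\int_\Omega u^pw^{-q-2}|\nabla w|^2$ plus the nonsingular term $\int_\Omega u^p|\nabla w|^2$. The condition $q<p-1$ is then needed only for the weight's own cross term $2pq\int_\Omega u^{p-1}w^{-q-1}\nabla u\cdot\nabla w$. Finally, in the $L\ln L$ step, handling $\int_\Omega u|\nabla w|^2$ by Gagliardo--Nirenberg needs $\sup_t\|\nabla w\|_{L^2}$ beforehand (Lemma~\ref{lem:3.3}, via the uniform Gronwall Lemma~\ref{lem:2.3}). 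Without it you get $y'\le h(t)y+g(t)$, not $y'+y\le C$.
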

\begin{remark}
\text We extend the method previously developed for single-species, single-chemical signaling (see \cite{le2025absence}) to two-species, single-chemical signaling systems, which requires additional handling of the coupling term \(ruv\).
\end{remark}
\begin{remark}
Compared with \cite{gao2024global}, this paper deals with the weakly singular sensitivity case, which is more challenging.
\end{remark}
The next theorem is devoted to showing the large time behavior of solutions, the solution of (\ref{eq:1.3}) converges to a steady state \((u_*, v_*, w_*)\) as \(t \to \infty\), which is given by
\begin{align}\label{eq:1.5}
u_* := \frac{1 + a}{\mu_1}, \quad v_* := a u_* \quad \text{and} \quad w_* := \mu_1 u_*^2,    
\end{align}
where
\begin{align}\label{eq:1.6}
a := \frac{r + \sqrt{r^2 + 4\mu_1 \mu_2}}{2\mu_2}.    
\end{align}
Then, we can obtain the following result based on Theorem \ref{thm:1.1}.
\begin{theorem}\label{thm:1.2} 
Let ($u$, $v$, $w$) be the solution obtained in 
Theorem \ref{thm:1.1}. Then we have the following results: 

\noindent
(1) If \( \mu_1 < \mu_2 < 3\mu_1 \) and \( r := \mu_2 - \mu_1 \), then the global bounded solution \( (u, v, w) \) of \eqref{eq:1.3} converges to the steady state, that is,
\begin{align*}
\|u(\cdot, t) - u_*\|_{L^\infty(\Omega)} + \|v(\cdot, t) - v_*\|_{L^\infty(\Omega)} + \|w(\cdot, t) - w_*\|_{L^\infty(\Omega)} \to 0 \quad \text{as} \quad t \to \infty,    
\end{align*}
where \( u_* = v_* = \frac{2}{\mu_1} \) and \( w_* = \frac{4}{\mu_1} \). 

\noindent
(2)Assume the conditions in Theorem \ref{thm:1.2}(1) hold. And \(\chi_i(w)=\frac{\chi_i}{w^k} \) (\( i = 1, 2 \)) satisfy the following assumptions:
\hspace*{\parindent}\item[\bf(H1)]
\begin{itemize}
 \( \chi_i(w) \in C^2(0, \infty) \) with \(  \chi_i(w) > 0 \) for all \( w > 0 \);
\end{itemize}
\item[\bf(H2)]  
\begin{itemize}
\( \lim_{w \to +\infty} \chi_i(w)\) exist.
\end{itemize}
\noindent Then there exist positive constants \(C\) and \(\lambda\) independent of \(t\) such that any positive solutions of system \eqref{eq:1.3} satisfy
\begin{align*}
\|u(\cdot, t) - u_*\|_{L^\infty(\Omega)} + \|v(\cdot, t) - v_*\|_{L^\infty(\Omega)} + \|w(\cdot, t) - w_*\|_{L^\infty(\Omega)} \leq C e^{-\lambda t}, \quad t > 0
\end{align*}
with \( u_* = v_* = \frac{2}{\mu_1} \) and \( w_* = \frac{4}{\mu_1} \).    
 
\end{theorem}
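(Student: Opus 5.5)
The plan is to work with the Lyapunov functional $\mathcal{F}$ from the introduction, taking $u_*=v_*=\frac{2}{\mu_1}$ and $w_*=\frac{4}{\mu_1}$. First I check that these values are forced. If $r=\mu_2-\mu_1$, then $r^2+4\mu_1\mu_2=(\mu_1+\mu_2)^2$, so \eqref{eq:1.6} gives $a=\frac{r+\mu_1+\mu_2}{2\mu_2}=1$. Then \eqref{eq:1.5} yields $u_*=v_*=\frac{2}{\mu_1}$ and $w_*=\mu_1u_*^2=\frac{4}{\mu_1}$. One checks directly that $w_*=u_*+v_*$, that $w_*-\mu_1u_*^2=0$, and that $w_*+ru_*v_*-\mu_2v_*^2=\frac{4}{\mu_1}+\frac{4(r-\mu_2)}{\mu_1^2}=0$.

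\textbf{Strict positivity of $w$.} Since $u_0\not\equiv0$, we have $w_t\ge\Delta w-w+u$. Comparison, together with a positive lower bound for the heat semigroup applied to the (positive) mass of $u$, gives $w\ge\delta>0$ on $\Omega\times(1,\infty)$. Lower bounds for $u$ and $v$ follow similarly, because the sources contain $+w\geq\delta$. This positivity makes $\ln u$, $\ln v$, $\ln w$ meaningful. Combined with Theorem \ref{thm:1.1}, it also keeps $\chi_i(w)=\chi_i w^{-k}$ bounded by $\chi_i\delta^{-k}$; (H1)–(H2) are exactly what is needed to handle $\chi_i(w)$ as a bounded $C^2$ coefficient.

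\textbf{Energy dissipation.} I then differentiate $\mathcal F$ along the flow. The diffusion terms produce $-u_*\int\frac{|\nabla u|^2}{u^2}$, $-v_*\int\frac{|\nabla v|^2}{v^2}$ and $-2w_*\int\frac{|\nabla w|^2}{w^2}$. The cross terms are $u_*\chi_1\int\frac{\nabla u\cdot\nabla w}{uw^k}$ and its analogue for $v$. By Young's inequality each is bounded by
\[
\tfrac{u_*}{2}\int\tfrac{|\nabla u|^2}{u^2}+\tfrac{u_*\chi_1^2}{2}\delta^{-2k}\|w\|_\infty^2\int\tfrac{|\nabla w|^2}{w^2}.
\]
For the reaction parts I substitute the equilibrium relations. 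The resulting integrand is a quadratic form in $(u-u_*,v-v_*,w-w_*)$ with weights $\frac{1}{u},\frac{1}{v},\frac{1}{w}$. This is the computation of \cite{lou2021role}, and its negative definiteness is where $\mu_1<\mu_2<3\mu_1$ enters. The outcome is
\[
\frac{d}{dt}\mathcal F\le-\varepsilon_1\mathcal E(t),\qquad \mathcal E=\int(u-u_*)^2+(v-v_*)^2+(w-w_*)^2 .
\]
The gradient terms need care: with $\chi_i$ unrestricted they cannot be absorbed by the diffusion. My plan is to instead absorb them into the negative gradient dissipation of $w$, increasing the weight on the $w$-term in $\mathcal F$ beyond $2$ if necessary. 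This may require a smallness condition on $\chi_i$ that the statement suppresses.

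\textbf{From dissipation to decay.} I first get convergence. Integrating the energy inequality gives $\int_1^\infty\mathcal E<\infty$. Theorem \ref{thm:1.1} and parabolic Hölder estimates make $\mathcal E$ uniformly continuous in $t$, so $\mathcal E(t)\to0$. Gagliardo–Nirenberg, $\|f\|_\infty\le C\|f\|_{W^{1,\infty}}^{2/3}\|f\|_{2}^{1/3}$ in two dimensions, with the $W^{1,\infty}$ bound coming from Hölder regularity, then upgrades this to $L^\infty$ convergence, which is part (1). For part (2), I use $\varphi(\omega)=\omega-u_*\ln\omega$ and L'Hôpital's rule. Since $u\to u_*$ uniformly, for $t>t_3$ the integrand of $\mathcal F$ is comparable to $(u-u_*)^2$, and likewise for $v$ and $w$. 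Hence $\mathcal F\le C\mathcal E$, so $\mathcal F'\le-\varepsilon_2\mathcal F$ and $\mathcal F\le Ce^{-\varepsilon_2t}$. This gives $\|u-u_*\|_2\le Ce^{-\varepsilon_2t/2}$, and Gagliardo–Nirenberg once more yields the $L^\infty$ exponential rate. Small times are absorbed into the constant.

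\textbf{Main obstacle.} The hardest step is the quadratic-form and gradient-absorption step in the energy dissipation: keeping the chemotactic cross terms $\chi_i w^{-k}$ under control with only $\delta\le w\le C$, while still using the reaction structure to get a negative definite form.
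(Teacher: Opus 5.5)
Your overall architecture matches the paper's: the same $\mathcal F$ with weight $2$ on the $w$-part, Young's inequality on the cross terms, $\int_1^\infty\mathcal E<\infty$ plus uniform continuity for (1), then L'H\^opital comparability, Gronwall and Gagliardo--Nirenberg for (2). Your suspicion about $\chi_i$ is correct. The paper's proof goes through Lemma~\ref{lem:4.1}, which assumes $\chi_1^2+\chi_2^2\le\chi_0=\frac{16}{1+K^2}$ with $K\ge\sup_t\|w(\cdot,t)\|_{L^\infty(\Omega)}$, and this hypothesis is missing from the statement. Raising the weight $B$ of the $w$-part will not remove it. At the equilibrium the reaction form becomes indefinite for large $B$: along $u-u_*=v-v_*$, after optimizing in $w-w_*$, negativity requires $(2+B)^2<16B$. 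So this buys at most a bounded factor in the threshold.

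Where you deviate from the paper, one step is unsupported and one is an improvement.

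\emph{The unsupported step.} The uniform bound $w\ge\delta$ on $\Omega\times(1,\infty)$ does not follow as sketched. The heat-kernel comparison needs $\inf_t\int_\Omega u(\cdot,t)>0$, and $\frac{d}{dt}\int_\Omega u=\int_\Omega w-\mu_1\int_\Omega u^2$ does not give this without further work. You do not actually need $\delta$. Since $2-2k>0$ and $w\le K$, one has $w^{-2k}=w^{2-2k}w^{-2}\le K^{2-2k}w^{-2}$; the paper instead uses $w^{-2k}\le 1+w^{-2}$ in \eqref{eq:4.11}. Either way the cross terms are absorbed by $-2w_*\int_\Omega\frac{|\nabla w|^2}{w^2}$, with a threshold depending only on $K$. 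Similarly, H\"older regularity gives $C^\theta$, not $W^{1,\infty}$. For (1), interpolate $C^\theta$ against $L^2$, or use the paper's uniform-continuity argument. Reserve higher regularity such as Lemma~\ref{lem:4.3} for (2), where positivity for large $t$ is supplied by (1).

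\emph{The improvement.} Putting $\int_\Omega(w-w_*)^2$ into $\mathcal E$ repairs a genuine gap in the paper. Lemma~\ref{lem:4.4} asserts $\mathcal F\le c_1\mathcal E$ from \eqref{eq:4.25}--\eqref{eq:4.26} alone, and these say nothing about the $w$-part of $\mathcal F$. Your justification for the extra term is too vague, though. The precise fact is as follows. With $U=u-u_*$, $V=v-v_*$, $W=w-w_*$, the integrand of the reaction part of $\frac{d}{dt}\mathcal F$ equals, pointwise,
\[
-\mu_1U^2-\mu_2V^2+rUV-\frac{u_*(W-2U)^2}{uw}-\frac{v_*(W-2V)^2}{vw}.
\]
The paper discards the last two terms through the AM--GM steps \eqref{eq:4.13}--\eqref{eq:4.14}. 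Keep them instead and bound $u,v,w$ above via Theorem~\ref{thm:1.1}. They are then $\le -c\,[(W-2U)^2+(W-2V)^2]$. The form $-\mu_1U^2-\mu_2V^2+rUV$ is negative definite in $(U,V)$, since $r^2<4\mu_1\mu_2$. The two forms have only $0$ as a common zero, so together they give $\le -c\,(U^2+V^2+W^2)$. This is exactly what makes $\mathcal F\le C\mathcal E$ legitimate.
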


\begin{remark}
Due to the presence of weakly singularity in our model compared to the one in \cite{lou2021role}, the scaling process becomes more complex.    
\end{remark}
\begin{remark}
We conducted numerical simulations to verify the conclusions of Theorem \ref{thm:1.2}.    
\end{remark}

Without confusion, we will use \(c_i\) \((i=1,2,...)\) to denote generic positive constants which may vary in the context, and the integration variables \(x\) and \(t\) will be omitted, for instance, \(\int_{0}^{t}\int_{\Omega} f(x,s)dxds\) will be abbreviated as \(\int_{0}^{t}\int_{\Omega} f\).
\section{Preliminaries}
\hspace*{\parindent}
In this section, we present the local existence and uniqueness of solutions as well as some basic estimates. The first lemma, which states that there is the local existence and uniqueness of this classical solution, has a proof based on the standard contraction mapping argument and the fixed-point argument in Banach spaces.
\begin{lemma}\textup{\cite{winkler2010boundedness,horstmann2005boundedness}}\label{lem:2.1}
Let \(\Omega \subset \mathbb{R}^2 \) be a smoothly bounded domain. Assume that the functions \(\chi_i > 0\) \((i = 1, 2)\), and the nonnegative initial data \((u_0, v_0, w_0)\) satisfies \eqref{eq:1.4}. Then there exist \(T_{\text{max}} \in (0, \infty]\) and a uniquely determined triple \((u, v, w)\) of positive functions from \(C^0(\bar{\Omega} \times [0, T_{\text{max}})) \cap C^{2,1}(\bar{\Omega} \times (0, T_{\text{max}}))\) solving \eqref{eq:1.3} in \(\Omega \times (0, T_{\text{max}})\). In addition,
\begin{align*}
\text{either} \quad T_{\text{max}} = \infty, \quad \text{or} \quad \lim_{t \nearrow T_{\text{max}}} \sup \left\{ \|u(\cdot, t)\|_{L^\infty(\Omega)} + \|v(\cdot, t)\|_{L^\infty(\Omega)} \right\} = \infty.     
\end{align*}
\end{lemma}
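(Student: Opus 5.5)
Local existence will follow the standard fixed-point scheme. The key difficulty is the singular factor $w^{-k}$ in the taxis terms, so the first task is to secure positivity of $w$ for $t>0$.

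Since $u_0\geq0$ and $u_0\not\equiv0$, the comparison principle for $w_t=\Delta w - w + (u+v)$ with $u,v\geq0$ gives $w\geq e^{-t}e^{t\Delta}w_0$ plus a strictly positive contribution from $u$. Concretely, on a short interval $[0,T]$ we can compare with the solution of $z_t=\Delta z-z+u$. The strong maximum principle, applied to the Neumann heat semigroup on a connected domain, then yields $\inf_{\Omega}w(\cdot,t)>0$ for every $t\in(0,T]$, uniformly on compact subintervals of $(0,T]$.

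To avoid circularity, set up the fixed point in the closed set
\[
X_T=\Bigl\{(u,v)\in C^0(\bar\Omega\times[0,T])^2:\ 0\le u,v,\ \|u\|_{\infty}+\|v\|_{\infty}\le R\Bigr\}.
\]
For given $(\bar u,\bar v)\in X_T$, first solve the linear $w$-equation with source $\bar u+\bar v$. Then solve the $u$- and $v$-equations in mild (variation-of-constants) form, with drift $\chi_i \bar u\, w^{-k}\nabla w$. The point to check is that $w^{-k}\nabla w$ stays integrable near $t=0$, where $w_0$ may vanish.

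I would handle this by a regularization. Replace $w^{-k}$ by $(w+\varepsilon)^{-k}$ and obtain local solutions by the standard contraction mapping argument, with the estimates $\|\nabla e^{t\Delta}\varphi\|_\infty\le c(1+t^{-1/2})\|\varphi\|_\infty$ and $\|e^{t\Delta}\nabla\cdot\varphi\|_\infty\le c(1+t^{-1/2-1/(2q)})\|\varphi\|_{L^q}$. Then pass $\varepsilon\to0$ using $\varepsilon$-independent bounds. These bounds come from the explicit lower bound on $w$ given by the semigroup estimate, namely $w(x,t)\ge c(t)>0$ for $t>0$, with $c(t)$ depending only on $\int u_0$ and $T$. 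Together with $k<1$, this is enough to make the drift coefficient bounded on $[\tau,T]$ for every $\tau>0$.

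Alternatively, and more simply, I would note that the cited results \cite{winkler2010boundedness,horstmann2005boundedness} treat sensitivities $\chi(w)$ that are smooth on $(0,\infty)$. One can therefore start the construction from any small time $t_0>0$, at which $w(\cdot,t_0)$ is strictly positive, and patch this with a short-time solution near $0$.

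Regularity then comes from parabolic Schauder theory, bootstrapping from Hölder continuity of $(u,v,\nabla w)$ to $C^{2,1}$ on $\bar\Omega\times(0,T)$. Nonnegativity of $u$ and $v$ follows from the maximum principle, since the reaction terms $w-\mu_1u^2$ and $w+ruv-\mu_2v^2$ are nonnegative when $u=0$, respectively $v=0$. Strict positivity of $u$, $v$, $w$ then follows from the strong maximum principle, using $w>0$ as a positive source.

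Uniqueness is proved by a Gronwall argument on the difference of two solutions in $L^2$. On $[\tau,T]$ this uses the positive lower bound of $w$ together with the Lipschitz property of $s\mapsto s^{-k}$ on $[c,\infty)$. Near $t=0$ it relies on the mild formulation.

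For the extensibility criterion, suppose $T_{\max}<\infty$ while $\|u\|_\infty+\|v\|_\infty$ stays bounded. Then the $w$-equation gives $\nabla w$ bounded, by the semigroup estimates, and $w\ge c(T_{\max})>0$ on $[T_{\max}/2,T_{\max})$. So the local existence time from a restart depends only on these bounds, which contradicts maximality.

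I expect the main obstacle to be controlling the singular factor $w^{-k}$ near $t=0$ when $w_0$ may vanish somewhere. For this step I would rely on the quantitative lower bound for $w$ derived from $u_0\not\equiv0$, together with $k<1$.
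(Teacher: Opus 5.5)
Your scheme (fixed point in $(u,v)$, $w$ from the linear equation, Schauder bootstrap, maximum principle, restart argument for extensibility) is the standard one. The restart step is fine, because $w(\cdot,t_1)>0$ on $\bar\Omega$ for every $t_1\in(0,T_{\max})$. The gap is exactly at the point you flag as the main obstacle, and the mechanism you propose does not remove it.

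\textbf{The lower bound on $w$ degenerates at $t=0$.} Any pointwise bound $w(\cdot,t)\ge c(t)$ satisfies $c(t)\le\min_{\bar\Omega}w(\cdot,t)\to\min_{\bar\Omega}w_0$ as $t\searrow0$, and this limit may be $0$. So, as you say yourself, it controls the drift only on $[\tau,T]$ with $\tau>0$. There any $k>0$ would do, so the hypothesis $k<1$ does no work in your argument. Nothing in the proposal controls $\chi_i u\,w^{-k}\nabla w$ on $(0,\tau)$, and every later step needs this:
\begin{itemize}
\item The contraction time of the $\varepsilon$-regularized problem is governed by $\sup(w+\varepsilon)^{-k}\le\varepsilon^{-k}$ and the Lipschitz constant $k\varepsilon^{-k-1}$. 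It may therefore shrink to $0$ as $\varepsilon\to0$.
\item You have no $\varepsilon$-independent bound for $(u_\varepsilon,v_\varepsilon)$ on a fixed interval $[0,T]$. Without it you get neither a limit solution near $t=0$ nor its continuity at $t=0$.
\item The claim that $c(t)$ depends only on $\int_\Omega u_0$ and $T$ is also not right. Keeping $\int_\Omega u$ away from $0$ uses $\frac{d}{dt}\int_\Omega u\ge-\mu_1\int_\Omega u^2$, i.e.\ an upper bound on $u$. That is precisely what you are trying to obtain uniformly in $\varepsilon$, so the argument is circular.
\item Uniqueness ``near $t=0$ via the mild formulation'' is unsupported. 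The map $s\mapsto s^{-k}$ is not Lipschitz at $0$, and you have no bound on the difference of two drifts there.
\item The alternative of starting at some $t_0>0$ with $w(\cdot,t_0)>0$ is circular, since that positivity presupposes a solution on $[0,t_0]$, which is the problematic part.
\end{itemize}

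\textbf{What is missing.} You need an estimate exploiting that $\nabla w$ is small where $w$ is small; this is where $k<1$ can genuinely enter, via $w^{-k}\nabla w=\frac{1}{1-k}\nabla(w^{1-k})$. It should bound $\|w^{-k}\nabla w(\cdot,s)\|_{L^q(\Omega)}$ for some $q>2$, uniformly in $\varepsilon$ and integrably in $s$ down to $s=0$. Then $\int_0^t(t-s)^{-\frac12-\frac1q}\|u\,w^{-k}\nabla w(\cdot,s)\|_{L^q(\Omega)}\,ds$ is finite and small for small $t$.

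Alternatively, strengthen the hypothesis to $w_0>0$ on $\bar\Omega$. Then $w\ge e^{-t}\min_{\bar\Omega}w_0$ by comparison, and the sensitivity is bounded and Lipschitz along the iteration. Your contraction argument then goes through directly.

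The paper offers nothing more for this lemma than a citation of results for regular sensitivities. Those likewise apply only once $w$ is bounded away from zero, so the step you are missing is not supplied there either.
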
  

From this point onward, we shall refer to \((u,v,w)\) as the distinct solution to the system \eqref{eq:1.3} within the domain \(\Omega \times (0, T_{\text{max}})\), where \(T_{\text{max}} \in (0, \infty]\), as confirmed by Lemma \ref{lem:2.1}. The upcoming lemma, which constitutes a parabolic regularity theorem within Sobolev spaces, will be utilized subsequently in Lemma \ref{lem:3.2} and during the demonstration of Theorem \ref{thm:1.1}. The proof is grounded in the standard \( L^p-L^q \) estimates pertaining to the Neumann heat kernel; for an in-depth demonstration.

\begin{lemma}\textup{\cite{horstmann2005boundedness}[Lemma 4.1]}\label{lem:2.2}
Let \(\Omega \subset \mathbb{R}^2\) be an open bounded domain with smooth boundary. Assume that \(p \geq 1\) and \(q \geq 1\) satisfy
\[
\begin{cases} 
q < \frac{2p}{2-p}, & \text{when } p < 2, \\ 
q < \infty, & \text{when } p = 2, \\ 
q = \infty, & \text{when } p > 2.
\end{cases}
\]
Assuming \(W_0 \in W^{1,q}(\Omega)\) and \(W\) is a classical solution to the following system
\[
\begin{cases} 
W_t = \Delta W - W + f & \text{in } \Omega \times (0, T), \\ 
\dfrac{\partial W}{\partial \nu} = 0 & \text{on } \partial\Omega \times (0, T), \\ 
W(\cdot, 0) = W_0 & \text{in } \Omega,
\end{cases} 
\]
where \(T \in (0, \infty]\). If \(f \in L^\infty((0, T)\); \(L^p(\Omega))\), then \(W \in L^\infty((0, T)\); \(W^{1,q}(\Omega))\).
\end{lemma}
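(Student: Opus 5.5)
This is the standard maximal Sobolev regularity estimate for the inhomogeneous Neumann heat equation. I would prove it from the variation-of-constants representation
\[
W(\cdot,t)=e^{t(\Delta-1)}W_0+\int_0^t e^{(t-s)(\Delta-1)}f(\cdot,s)\,ds ,\qquad t\in(0,T),
\]
which holds for the classical solution by uniqueness of classical solutions of the linear Neumann problem. The two contributions are then estimated separately in $W^{1,q}(\Omega)$ using the well-known $L^p$--$L^q$ smoothing estimates for the Neumann heat semigroup $(e^{t\Delta})_{t\ge0}$ on a smooth bounded domain of $\mathbb{R}^2$.

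For the initial-data term I would use two facts. First, $\|e^{t\Delta}W_0\|_{L^q}\le \|W_0\|_{L^q}$. Second, $\|\nabla e^{t\Delta}W_0\|_{L^q}\le c_1\|\nabla W_0\|_{L^q}$ for all $t>0$; this holds for $q<\infty$, and in the case $q=\infty$ it follows from the gradient bound $\|\nabla e^{t\Delta}\varphi\|_{L^\infty}\le c\|\nabla\varphi\|_{L^\infty}$. Both bounds are then multiplied by the factor $e^{-t}\le1$. This gives a bound for this term in $L^\infty((0,T);W^{1,q}(\Omega))$ that does not depend on $T$.

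For the Duhamel term I would invoke the gradient smoothing estimate
\[
\|\nabla e^{\tau\Delta}\varphi\|_{L^q}\le c_2\Bigl(1+\tau^{-\frac12-\frac{2}{2}\left(\frac1p-\frac1q\right)}\Bigr)e^{-\lambda_1\tau}\|\varphi\|_{L^p},
\]
where $\lambda_1>0$ is the first nonzero Neumann eigenvalue. It applies to $\varphi=f(\cdot,s)-\bar f(s)$, which has zero mean. The mean part $\bar f$ is spatially constant, so it contributes nothing to the gradient, and its $L^q$ contribution is bounded by $\int_0^t e^{-(t-s)}|\bar f(s)|\,ds\le |\Omega|^{-1/p}\sup_s\|f(s)\|_{L^p}$. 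An analogous $L^p$--$L^q$ estimate controls the $L^q$ norm of the oscillating part. This leaves
\[
\|\nabla W(\cdot,t)\|_{L^q}\le c_1\|\nabla W_0\|_{L^q}+c_3\sup_{s\in(0,T)}\|f(s)\|_{L^p}\int_0^\infty\bigl(1+\tau^{-\frac12-\frac1p+\frac1q}\bigr)e^{-(1+\lambda_1)\tau}\,d\tau .
\]

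The main step, and the only place where the hypotheses on $p,q$ enter, is the finiteness of this last integral. Integrability at infinity is automatic because of the exponential factor. At $\tau=0$ the integral converges exactly when $\frac12+\frac1p-\frac1q<1$, that is, $\frac1q>\frac1p-\frac12$. For $p<2$ this is precisely $q<\frac{2p}{2-p}$. For $p=2$ it allows any $q<\infty$. For $p>2$ it holds even with $q=\infty$, since then the exponent is $\frac12+\frac1p<1$. These are the three cases in the statement. Combining the bounds yields $\sup_{t\in(0,T)}\|W(\cdot,t)\|_{W^{1,q}(\Omega)}<\infty$, so $W\in L^\infty((0,T);W^{1,q}(\Omega))$.
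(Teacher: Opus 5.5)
Your argument is correct and follows the standard proof behind the result, which the paper only cites (Horstmann--Winkler, Lemma 4.1): variation of constants plus the Neumann heat semigroup $L^p$--$L^q$ and gradient smoothing estimates, with the hypotheses on $(p,q)$ entering only through integrability of $\tau^{-\frac12-\frac1p+\frac1q}$ near $\tau=0$. One small imprecision is that your displayed gradient estimate holds only for $p\le q$; when $q<p$ (which the hypotheses allow, e.g.\ $p=2$, $q=1$), first replace $p$ by $q$ using $\|f\|_{L^q}\le C\|f\|_{L^p}$ on the bounded domain $\Omega$, which keeps the hypotheses satisfied and leaves only the integrable singularity $\tau^{-1/2}$.
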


The subsequent lemma presents a valuable differential inequality that will be subsequently utilized in Lemma \ref{lem:3.3}.

\begin{lemma}\textup{\cite{le2025absence}[Lemma 2.3]}\label{lem:2.3}
Assume that nonnegative function \( y \in C^1([0, T]) \) \begin{align*}
\int_t^{t+\tau} y(s) \, ds \leq L_1 \quad \text{for all } t \in (0, T-\tau),    
\end{align*}
where \( \tau = \min \{1, \frac{T}{2}\} \), \( L_1 > 0 \). And
\begin{align*}
y'(t) \leq h(t)y(t) + g(t) \quad \text{for all } t \in (0, T),    
\end{align*}
where \( h \) and \( g \) are nonnegative continuous functions in \([0, T)\) such that
\begin{align*}
\int_t^{t+\tau} h(s) \, ds \leq L_2 \quad \text{and} \quad \int_t^{t+\tau} g(s) \, ds \leq L_3 \quad \text{for all } t \in (0, T-\tau),   
\end{align*}
where \( L_2 > 0 \) and \( L_3 > 0 \). 
Then there exists \( C > 0 \) such that \( y(t) \leq C \) for all \( t \in (0, T) \).  
\end{lemma}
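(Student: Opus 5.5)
The plan is to split the time interval into windows of length $\tau$ and, on each window, bound $y(t)$ by combining an averaged value of $y$ with the growth allowed by the differential inequality. The two ingredients are a mean value argument and a Gronwall integration.

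\textbf{Step 1 (pick a good starting time).} Fix $t\in(0,T)$. First suppose $t\geq \tau$, so that $t-\tau\in(0,T-\tau)$, possibly up to an endpoint issue handled by continuity. By hypothesis $\int_{t-\tau}^{t}y(s)\,ds\le L_1$. Since $y$ is nonnegative and continuous, the mean value theorem gives some $t_0\in[t-\tau,t]$ with
\begin{align*}
y(t_0)\le \frac{L_1}{\tau}.
\end{align*}

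\textbf{Step 2 (integrate from $t_0$ to $t$).} Multiply the inequality $y'\le hy+g$ by the integrating factor $e^{-\int_{t_0}^s h}$ and integrate from $t_0$ to $t$. This gives
\begin{align*}
y(t)\le y(t_0)e^{\int_{t_0}^{t}h}+\int_{t_0}^{t}e^{\int_{s}^{t}h}g(s)\,ds .
\end{align*}
Because $h,g\ge0$ and $[t_0,t]\subset[t-\tau,t]$, both exponentials are at most $e^{L_2}$. Hence
\begin{align*}
y(t)\le \Big(\frac{L_1}{\tau}+L_3\Big)e^{L_2}.
\end{align*}

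\textbf{Step 3 (initial window).} For $t\in(0,\tau)$ I would integrate from $0$ instead, which gives $y(t)\le (y(0)+L_3)e^{L_2}$. This needs the bounds on $\int h$ and $\int g$ over $(0,\tau)$. These follow by letting $t\searrow0$ in the hypotheses, since the integrands are continuous on $[0,T)$. Also $y(0)$ is finite because $y\in C^1([0,T])$.

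\textbf{Step 4 (constant).} Take
\begin{align*}
C=\max\Big\{(y(0)+L_3)e^{L_2},\ \Big(\frac{L_1}{\tau}+L_3\Big)e^{L_2}\Big\}.
\end{align*}

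The main obstacle is bookkeeping rather than substance. The hypotheses are stated only for $t\in(0,T-\tau)$, so the windows $[t-\tau,t]$ used in Steps 1 and 2 must stay inside the allowed range. This holds because $\tau\le T/2$, and the endpoint cases follow by continuity and limits of the integrals. If $T=\infty$, then $\tau=1$ and nothing changes.
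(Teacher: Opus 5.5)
Your argument is correct. You pick $t_0\in[t-\tau,t]$ with $y(t_0)\le L_1/\tau$, apply Gronwall on $[t_0,t]$, and integrate from $0$ on the initial window, with the bounds for $h,g$ on $(0,\tau)$ obtained by letting $t\searrow 0$; this gives $y\le \max\{y(0)+L_3,\,L_1/\tau+L_3\}e^{L_2}$. The paper gives no proof of its own and simply imports the lemma from Lemma 2.3 of Le's work, and yours is the standard argument for this kind of statement.
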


The following lemma, directly deduced from \cite{winkler2024logarithmically} [Corollary 1.2], presents a refined variant of the Gagliardo–Nirenberg interpolation inequality. This refined inequality will subsequently serve as a crucial tool for deriving 
 \( L^p \) bounds for both \( u \) and \( v \)
in Lemma \ref{lem:3.7}. For the proof of this lemma, we refer to \cite{le2025boundedness} [Lemma 2.6] and [Lemma 2.7].
\begin{lemma}\label{lem:2.4}
Let \(\Omega \subset \mathbb{R}^2\) be a bounded domain with a smooth boundary, and suppose \(m > 0\) and \(\gamma > \xi \geq 0\). Then, for every \(\varepsilon > 0\), there exists a constant \(C  > 0\) such that the inequality
\begin{align*}
\int_{\Omega} \phi^{m+1} \ln^{\xi}(\phi + e) \, dx \leq &\varepsilon \left( \int_{\Omega} \phi \ln^{\gamma}(\phi + e) \, dx \right) \left( \int_{\Omega} |\nabla \phi^{\frac{m}{2}}|^2 \, dx \right)\\
&+ \varepsilon \left( \int_{\Omega} \phi \, dx \right)^m \left( \int_{\Omega} \phi \ln^{\gamma}(\phi + e) \, dx \right) + C   
\end{align*}
holds for any nonnegative function \(\phi \in C^1(\Omega)\).   
\end{lemma}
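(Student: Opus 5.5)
The inequality is a logarithmic refinement of the two-dimensional Gagliardo--Nirenberg inequality, and the natural plan is to derive it from \cite{winkler2024logarithmically}[Corollary 1.2], following \cite{le2025boundedness}[Lemma 2.6, Lemma 2.7]. Set $\psi:=\phi^{\frac m2}$, so that $\int_\Omega|\nabla\phi^{\frac m2}|^2=\|\nabla\psi\|_{L^2(\Omega)}^2$ and $\phi^{m+1}=\psi^{2+\frac2m}$. The logarithmic factor is handled by a cut-off at a large level $M$, chosen at the end in terms of $\varepsilon$; the surplus power $\ln^{\gamma-\xi}$ is what produces the small factor $\varepsilon$.

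\textbf{Step 1: the low region.} On $\{\phi\le M\}$ I bound the integrand trivially:
\begin{align*}
\int_{\{\phi\le M\}}\phi^{m+1}\ln^{\xi}(\phi+e)\le M^{m+1}\ln^{\xi}(M+e)\,|\Omega| .
\end{align*}
This is absorbed into $C=C(\varepsilon)$.

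\textbf{Step 2: the high region.} On $\{\phi>M\}$ I use
\begin{align*}
\ln^{\xi}(\phi+e)\le \ln^{-(\gamma-\xi)}(M+e)\,\ln^{\gamma}(\phi+e),
\end{align*}
and apply the 2D Gagliardo--Nirenberg inequality to the truncation $\rho:=(\phi-M)_+$ (or to a smooth cut-off of it):
\begin{align*}
\int_\Omega\rho^{m+1}\le C_{GN}\Big(\int_\Omega|\nabla\rho^{\frac m2}|^2\Big)\Big(\int_\Omega\rho\Big)+C_{GN}\Big(\int_\Omega\rho\Big)^{m+1}.
\end{align*}
Two facts make this useful.
\begin{itemize}
\item[(i)] Since $\phi\ge\rho$, on the set $\{\phi>M\}$ the gradient of $\rho^{\frac m2}$ is controlled by that of $\phi^{\frac m2}$; for $m<2$ this needs a short pointwise check.
\item[(ii)] The $L^1$ factor satisfies
\begin{align*}
\int_\Omega\rho\le\int_{\{\phi>M\}}\phi\le \ln^{-\gamma}(M+e)\int_\Omega\phi\ln^\gamma(\phi+e).
\end{align*}
\end{itemize}
For the last term $\big(\int_\Omega\rho\big)^{m+1}$, I split one factor as $\int_\Omega\rho\le \ln^{-\gamma}(M+e)\int_\Omega\phi\ln^{\gamma}(\phi+e)$ and bound the remaining $m$ factors by $\big(\int_\Omega\phi\big)^m$. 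This matches the second term on the right-hand side of the claim.

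\textbf{Main obstacle.} The left-hand side carries the weight $\ln^{\xi}(\phi+e)$ inside the superlinear integral. The plain Gagliardo--Nirenberg inequality only places the logarithm on the $L^1$ factor, so Step 2 alone does not give the claim. To deal with this, I would decompose dyadically in the logarithm, using the levels $M_j:=\exp(2^j)$. On $\{M_j<\phi\le M_{j+1}\}$ the weight $\ln^\xi(\phi+e)$ is comparable to $2^{j\xi}$. On the same set, the $L^1$ mass of $(\phi-M_j)_+$ is bounded by $2^{-j\gamma}\int_\Omega\phi\ln^\gamma(\phi+e)$. Summing the resulting bounds over $j\ge j_0$ produces the geometric series $\sum_j 2^{-j(\gamma-\xi)}$. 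This is exactly where $\gamma>\xi$ is used: the series converges, and its tail beyond $j_0$ is as small as we like. Choosing $j_0$ (equivalently $M$) so that this tail times $C_{GN}$ is at most $\varepsilon$ yields both $\varepsilon$-terms, and the region $\phi\le M_{j_0}$ is then absorbed into $C$ by Step 1.

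If the pieces do not recombine cleanly, for instance because the gradient terms of the different truncations overlap, I would instead invoke \cite{winkler2024logarithmically}[Corollary 1.2] directly with the Young-type function $\phi\ln^\gamma(\phi+e)$. That result is precisely this weighted Orlicz version of Gagliardo--Nirenberg, and it reduces the lemma to checking that its structural hypotheses hold for $m>0$ and $\gamma>\xi\ge0$.
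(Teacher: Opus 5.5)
\textbf{There is a gap.} The dyadic-in-the-logarithm idea can be made to work, but two of its steps fail as you state them.

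First, claim (i) is false for every $m\in(0,2)$. On $\{\phi>M\}$,
\[
|\nabla(\phi-M)_+^{m/2}|=\tfrac m2(\phi-M)^{\frac m2-1}|\nabla\phi|\ \ge\ \tfrac m2\phi^{\frac m2-1}|\nabla\phi|=|\nabla\phi^{m/2}|,
\]
so the pointwise comparison runs the wrong way. Near a piece of the level set $\{\phi=M\}$ where $\nabla\phi\neq0$, the integrand of $\int_\Omega|\nabla(\phi-M)_+^{m/2}|^2$ behaves like $s^{m-2}$ in the normal distance $s$. The integral is therefore infinite for $m\le1$. For $1<m<2$, a profile of slope $\delta$ through the level $M$ makes its ratio to $\int_\Omega|\nabla\phi^{m/2}|^2$ grow like $\delta^{m-2}$, so no uniform bound holds.

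Second, on the shell $\{M_j<\phi\le M_{j+1}\}$ you cannot bound $\phi^{m+1}$ by a multiple of $(\phi-M_j)_+^{m+1}$, because the latter vanishes at the bottom of the shell.

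Both problems disappear if you truncate $\phi^{m/2}$ instead of $\phi$, and one dyadic level lower. Set $\sigma_j:=(\phi^{m/2}-M_{j-1}^{m/2})_+$. Then $|\nabla\sigma_j|\le|\nabla\phi^{m/2}|$ and $\sigma_j^{2/m}\le\phi\,\mathbf{1}_{\{\phi>M_{j-1}\}}$. Since $M_{j-1}/M_j=e^{-2^{j-1}}\le e^{-1/2}$, you also get $\phi^{m+1}\le c_m\sigma_j^{2+\frac2m}$ on the $j$-th shell. Now apply $\int_\Omega\sigma^{2+\frac2m}\le C\int_\Omega|\nabla\sigma|^2\int_\Omega\sigma^{\frac2m}+C\bigl(\int_\Omega\sigma^{\frac2m}\bigr)^{m+1}$ to $\sigma_j$. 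Each shell then carries a prefactor of order $2^{j\xi}2^{-j\gamma}$. The overlap of the gradient terms is harmless, since each is bounded by the full $\int_\Omega|\nabla\phi^{m/2}|^2$ and $\sum_j2^{-j(\gamma-\xi)}<\infty$. Your fallback to Winkler's corollary would be acceptable only as a citation; it supplies no argument.

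\textbf{The paper's route.} The paper avoids truncation entirely. It first uses the pointwise inequality $a^{m+1}\ln^\xi(a+e)\le\delta a^{m+1}\ln^\gamma(a+e)+c_\delta$, which is the only place $\gamma>\xi$ enters. It then applies the two-dimensional embedding $W^{1,1}(\Omega)\hookrightarrow L^2(\Omega)$ to $f=\phi^{\frac{m+1}2}\ln^{\frac\gamma2}(\phi+e)$. This puts the logarithm inside the Sobolev function rather than on the $L^1$ factor, which removes exactly the obstacle you identified. The pointwise bound $|\nabla f|\le c\,\phi^{1/2}\ln^{\gamma/2}(\phi+e)|\nabla\phi^{m/2}|$ together with Cauchy--Schwarz yields precisely $\int_\Omega\phi\ln^\gamma(\phi+e)\cdot\int_\Omega|\nabla\phi^{m/2}|^2$. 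H\"older converts the lower-order term $\bigl(\int_\Omega\phi\ln^{\frac\gamma{m+1}}(\phi+e)\bigr)^{m+1}$ into $\bigl(\int_\Omega\phi\bigr)^m\int_\Omega\phi\ln^\gamma(\phi+e)$, and choosing $\delta=\varepsilon/C$ finishes. Your repaired route reaches the same inequality using only the log-free Gagliardo--Nirenberg inequality, at the cost of the level-set bookkeeping.
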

\begin{proof} 
Since \( \gamma > \xi \geq 0 \), one can verify that for any \( \delta > 0 \), there exists \( c_1 > 0 \) such that for any \( a \geq 0 \) we have
\[
a^{m+1} \ln^\xi(a + e) \leq \delta a^{m+1} \ln^\gamma(a + e) + c_1. 
\]
This entails that
\begin{align*}
\int_\Omega \phi^{m+1} \ln^\xi(\phi + e) \leq \delta \int_\Omega \phi^{m+1} \ln^\gamma(\phi + e) + c_1 |\Omega|.    
\end{align*}
By applying Sobolev's inequality when \( n = 2 \), there exists a positive constant \( c_2 \) such that
\begin{align}\label{eq:2.1}
\int_{\Omega} \phi^{m+1} \ln^\gamma (\phi + e) \leq c_2 \left( \int_{\Omega} \left| \nabla \left( \phi^{\frac{m+1}{2}} \ln^\frac{\gamma}{2} (\phi + e) \right)  \right|  \right) ^2 + c_2 \left( \int_{\Omega} \phi \ln^\frac{\gamma}{m+1} (\phi + e) \right)^{m+1}.    
\end{align}
By using elementary inequalities, one can verify that
\begin{align*}
\left| \nabla \left( \phi^{\frac{m+1}{2}} \ln^\frac{\gamma}{2} (\phi + e) \right) \right| \leq c_3 \phi^{\frac{1}{2}} \ln^\frac{\gamma}{2} (\phi + e) |\nabla \phi^{\frac{m}{2}}| ,   
\end{align*}
where \( c_3 = C(m, \gamma) > 0 \). This, together with Hölder's inequality leads to
\begin{align}\label{eq:2.2}
c_2 \left( \int_{\Omega} \left| \nabla\left( \phi^{\frac{m+1}{2}} \ln^\frac{\gamma}{2} (\phi + e) \right)\right|\right)^2 \leq c_4 \int_{\Omega} \left| \nabla \phi^{\frac{m}{2}}  \right|^2\cdot \int_{\Omega}\phi \ln^\gamma (\phi + e),    
\end{align}
where \( c_4 = c_2 c_3 \). By Hölder's inequality, we deduce that
\begin{align}\label{eq:2.3}
c_2 \left( \int_{\Omega} \phi \ln^\frac{\gamma}{m+1} (\phi + e) \right)^{m+1} \leq c_2 \left( \int_{\Omega} \phi \right)^m \left( \int_{\Omega} \phi \ln^\gamma (\phi + e) \right).     
\end{align}
Collecting \eqref{eq:2.1} - \eqref{eq:2.3} implies that
\begin{equation}\label{eq:2.4}
 \int_{\Omega} \phi^{m+1} \ln^\gamma (\phi + e)\leq C \int_{\Omega} \left| \nabla \phi^{\frac{m}{2}}  \right|^2\cdot \int_{\Omega}\phi \ln^\gamma (\phi + e)+ C\left( \int_{\Omega} \phi \right)^m \left( \int_{\Omega} \phi \ln^\gamma (\phi + e) \right).   
\end{equation}
Now for any fixed \( \varepsilon \), we choose \( \delta = \frac{\varepsilon }{C} \)  and apply \eqref{eq:2.4} to have the desire inequality in this lemma. 
\end{proof}
\section{Global boundedness}
\hspace*{\parindent}
In this section, we present several significant estimates for solutions, particularly deriving the  \( L \ln L \) boundedness and \( L^p \) boundedness for  \( u \) and \( v \). And on this basis, we will prove Theorem 1.1. Let's commence with an 
\( L^1 \)  bound as outlined in the following lemma:
\begin{lemma}\textup{\cite{lou2021role}[Lemma 2.2]}{\label{lem:3.1}}
There exist \( M> 0 \) and \( L > 0 \) such that the solution of \eqref{eq:1.3} satisfies
\begin{align}{\label{eq:3.1}}
\int_{\Omega} u(\cdot, t) \leq M  \quad \int_{\Omega} v(\cdot, t) \leq M \quad \text{and} \quad \int_{\Omega} w(\cdot, t) \leq M \quad \text{for all } t \in (0, T_{\max}),    
\end{align}  
as well as
\begin{align}{\label{eq:3.2}}
\int_{t}^{t+\tau} \int_{\Omega} u^2 \leq L \quad \text{and} \quad \int_{t}^{t+\tau} \int_{\Omega} v^2 \leq L \quad \text{for all } t \in (0, T_{\max} - \tau),   
\end{align}    
where \[\tau := \min \left\{ 1, \frac{1}{4} T_{\max} \right\}.\]  
\end{lemma}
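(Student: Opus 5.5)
We integrate the three equations over $\Omega$. The Neumann boundary conditions make the diffusion and cross-diffusion terms vanish, and the singular factor $w^{-k}$ plays no role here. The difficulty is the growth term $r\int_\Omega uv$ in the $v$-equation. We absorb it by taking a suitable combination of the masses and using the quadratic degradations. Write $U(t)=\int_\Omega u$, $V(t)=\int_\Omega v$ and $W(t)=\int_\Omega w$. Integration yields
\begin{align*}
U'&=W-\mu_1\int_\Omega u^2,\qquad
V'=W+r\int_\Omega uv-\mu_2\int_\Omega v^2,\qquad
W'=U+V-W.
\end{align*}

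\textbf{Step 1: the combined functional.} Set $y:=U+V+W$. Young's inequality gives $ruv\le \frac{\mu_2}{2}v^2+\frac{r^2}{2\mu_2}u^2$. Since $r\int uv$ may dominate $\mu_1\int u^2$ when $\mu_1$ is small relative to $r^2/\mu_2$, we weight the functional as $y:=AU+V+W$ with $A\ge 1$ chosen so that $A\mu_1\ge \frac{r^2}{2\mu_2}+\frac{A\mu_1}{2}$, that is, $A\mu_1\ge r^2/\mu_2$. Then
\[
y'+y\le (A+2)W+(A+1)U+2V-\frac{A\mu_1}{2}\int_\Omega u^2-\frac{\mu_2}{2}\int_\Omega v^2 .
\]
The $W$-contribution is handled as follows. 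We note that $W$ is controlled by $y$, and we use a small multiple of $W$ instead: take $y:=AU+V+\eta W$ with $\eta$ small. Then $W'=U+V-W$ contributes $\eta(U+V)-\eta W$. The linear terms $(A+1+\eta)W$ remain from the $u$- and $v$-equations. To absorb them we need $W\lesssim$ something dissipated, so we compare with the ODE for $W$ directly.

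\textbf{Step 2: closing via Cauchy--Schwarz and an ODE comparison.} A cleaner route is the one in \cite{lou2021role}. By Cauchy--Schwarz, $\int_\Omega u^2\ge U^2/|\Omega|$, and similarly for $v$. Set $Y:=AU+V$. The estimates above give
\[
Y'\le (A+1)W+c_1-c_2(U^2+V^2),\qquad W'=U+V-W .
\]
Now consider $Z:=Y+\beta W$ with $\beta:=2(A+1)$. Then
\[
Z'\le -(A+1)W+\beta(U+V)-c_2(U^2+V^2)+c_1 .
\]
Using $\beta s-c_2s^2\le -s+c_3$ for $s\ge0$, we obtain $Z'+\delta Z\le c_4$ for some $\delta>0$. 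An ODE comparison then gives $Z(t)\le \max\{Z(0),c_4/\delta\}$, which proves \eqref{eq:3.1}.

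\textbf{Step 3: the space-time bound.} We integrate the differential inequality before discarding the quadratic terms. Retaining $-\frac{c}{2}\int_\Omega(u^2+v^2)$ on the right-hand side gives
\[
Z'+\frac{c}{2}\int_\Omega (u^2+v^2)\le c_5 .
\]
Integrating over $(t,t+\tau)$ with $\tau\le1$ and using the uniform bound on $Z$ from Step 2 yields \eqref{eq:3.2}.

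The main obstacle is the bookkeeping of the coupling term $r\int uv$. It must be split by Young's inequality, with the weight $A$ on $U$ chosen so that $A\mu_1$ dominates $r^2/(2\mu_2)$. This choice is what allows the argument to work for arbitrary positive $\mu_1,\mu_2,r$.
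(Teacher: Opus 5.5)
Your proof is correct. With $A\ge\max\{1,r^2/(\mu_1\mu_2)\}$, the weight on $\int_\Omega u$ absorbs $r\int_\Omega uv$, and the choice $\beta=2(A+1)$ turns the $w$-equation into genuine dissipation $-(A+1)\int_\Omega w$. Cauchy--Schwarz, the ODE comparison (e.g.\ with $\delta=\min\{1/A,1/2\}$) and the retained quadratic terms then give \eqref{eq:3.1} and \eqref{eq:3.2}.

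The paper gives no argument of its own and simply imports the lemma from \cite{lou2021role}, so your weighted-mass ODE argument supplies the self-contained justification that citation stands for. You can delete the exploratory part of Step 1, since Step 2 alone carries the proof; incidentally, the coefficient of $W$ in $y'+y$ there should be $A+1$, not $A+2$.
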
 

With an established \( L^1 \) bound for both \( u \) and \( v \) , we can proceed to derive \( L^p \) bounds for \( w \) for any \( p \geq 1 \). This is made possible by the parabolic regularity result in Sobolev spaces, as demonstrated in Lemma \ref{lem:2.2}.
\begin{lemma} \label{lem:3.2}
For any \( p \geq 1 \), there exists \( C  > 0 \) such that  
\[
\int_\Omega w^p(\cdot, t) \leq C \quad \text{for all } t \in (0, T_{\text{max}}).
\]    
\end{lemma}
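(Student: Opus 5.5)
We apply Lemma \ref{lem:2.2} to the $w$-equation of \eqref{eq:1.3}. That equation has exactly the form $W_t=\Delta W-W+f$ with $W=w$, $W_0=w_0$ and source $f:=u+v$, under homogeneous Neumann boundary conditions. By Lemma \ref{lem:2.1}, $u$ and $v$ are positive, and by \eqref{eq:3.1} of Lemma \ref{lem:3.1} we have $\|f(\cdot,t)\|_{L^1(\Omega)}\le 2M$ for all $t\in(0,T_{\max})$. Hence $f\in L^\infty((0,T_{\max});L^1(\Omega))$, which is the hypothesis of Lemma \ref{lem:2.2} with exponent $p=1$ there.

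For $p=1<2$, Lemma \ref{lem:2.2} permits any $q$ with $1\le q<\frac{2\cdot 1}{2-1}=2$; we fix $q:=\frac32$, say. The initial datum is admissible: by \eqref{eq:1.4}, $w_0\in W^{1,\infty}(\Omega)\subset W^{1,q}(\Omega)$ because $\Omega$ is bounded. Lemma \ref{lem:2.2} then gives
\[
\|w(\cdot,t)\|_{W^{1,3/2}(\Omega)}\le c_1 \quad\text{for all } t\in(0,T_{\max}),
\]
with $c_1$ independent of $t$.

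Since $n=2$ and $q=\frac32<2$, the Sobolev embedding gives $W^{1,3/2}(\Omega)\hookrightarrow L^{s}(\Omega)$ for $s\le\frac{2q}{2-q}=6$. The bound on $\int_\Omega w^p$ is therefore immediate for $p\le 6$. For arbitrary $p\ge1$ we instead take $q\in[1,2)$ close enough to $2$ that $\frac{2q}{2-q}\ge p$. This is possible since $\frac{2q}{2-q}\to\infty$ as $q\nearrow 2$; explicitly, $q:=\max\{1,\frac{2p}{p+2}\}<2$ works. We then obtain
\[
\int_\Omega w^p(\cdot,t)\le c_2\|w(\cdot,t)\|^p_{W^{1,q}(\Omega)}\le C \quad\text{for all } t\in(0,T_{\max}),
\]
using $w\ge0$.

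The argument has no real difficulty; the only point to check is that Lemma \ref{lem:2.2} applies with source exponent $p=1$, so that only the mass bound of Lemma \ref{lem:3.1} is needed and no higher integrability of $u$ and $v$. The constant produced by Lemma \ref{lem:2.2} is uniform in time even when $T_{\max}=\infty$, because the damping term $-W$ yields exponentially decaying heat-semigroup estimates.
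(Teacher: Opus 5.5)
Your proposal is correct and follows essentially the same route as the paper. Both arguments apply Lemma \ref{lem:2.2} to the $w$-equation with source $u+v\in L^\infty((0,T_{\max});L^1(\Omega))$ from Lemma \ref{lem:3.1}, obtain a time-uniform $W^{1,q}(\Omega)$ bound for $q<2$, and conclude via the two-dimensional Sobolev embedding with $q=\frac{2p}{p+2}$, the same exponent that appears in the paper's inequality. The only minor difference is that the paper first derives a separate $L^1$ bound for $w$ by integrating the equation, then bounds $\int_\Omega w^p$ by $\|\nabla w\|_{L^{2p/(p+2)}}$ and $\int_\Omega w$; you instead use the full $W^{1,q}$ norm supplied by Lemma \ref{lem:2.2}, which makes that extra step unnecessary.
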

\begin{proof}
Integrating the third equation of  \eqref{eq:1.3} over \( \Omega \) and applying Lemma \ref{lem:3.1} yields  
\begin{align*}
\frac{d}{dt} \int_\Omega w + \int_\Omega w &= \int_\Omega \Delta w+ \int_\Omega u +\int_\Omega v\\&=-\int_\Omega |\nabla w|^2+ \int_\Omega u +\int_\Omega v\leq 2 M.     
\end{align*} 
Therefore, applying  Gronwall's inequality to this implies that  
\begin{align*}
 \sup_{t \in (0, T_{\text{max}})} \int_\Omega w(\cdot, t) \leq \max \left\{ \int_\Omega w_0, 2M\ \right\}.   
\end{align*} 
Since \( u,v \in L^\infty ( (0, T_{\text{max}})\); \(L^1(\Omega))  \), we apply Lemma \ref{lem:2.2}  to  obtain  that \( w \in L^\infty ( (0, T_{\text{max}})\); \(W^{1, q}(\Omega) \) for any \( q \in [1, 2) \). Now, applying Sobolev's inequality deduces that  
\[
\int_\Omega w^p \leq c_1 \left( \int_\Omega |\nabla w|^{\frac{2p}{2 + p}} \right)^{\frac{p + 2}{2}} + c_1 \left( \int_\Omega w \right)^p \leq c_2 \quad \text{for all } t \in (0, T_{\text{max}}),  
\]  
where \( c_1 > 0 \) and \( c_2 > 0 \). The proof is now complete.  
\end{proof}  
The lemma below offers an \( L^2 \) bound for the gradient of 
 \( w \), a result that will be subsequently employed in Lemma \ref{lem:3.5}.
\begin{lemma}\label{lem:3.3} 
There exists \( C > 0 \) such that  
\[
\int_{\Omega} |\nabla w(\cdot, t)|^2 \leq C  \quad \text{for all } t \in (0, T_{\text{max}}).
\]
\end{lemma}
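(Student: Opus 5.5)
We will test the third equation of \eqref{eq:1.3} against $-\Delta w$ and then close the resulting differential inequality with Lemma \ref{lem:2.3}, using the space-time $L^2$ bounds \eqref{eq:3.2} for $u$ and $v$.

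\textbf{Differential inequality.} Put $y(t):=\int_\Omega|\nabla w(\cdot,t)|^2$. Multiply $w_t=\Delta w+u+v-w$ by $-\Delta w$ and integrate by parts, using $\partial_\nu w=0$. This gives
\begin{align*}
\frac12\frac{d}{dt}\int_\Omega|\nabla w|^2+\int_\Omega|\Delta w|^2+\int_\Omega|\nabla w|^2=-\int_\Omega(u+v)\Delta w .
\end{align*}
By Young's inequality,
\begin{align*}
-\int_\Omega(u+v)\Delta w\le \frac12\int_\Omega|\Delta w|^2+\int_\Omega u^2+\int_\Omega v^2 .
\end{align*}
Therefore
\begin{align*}
y'(t)+2y(t)\le 2\int_\Omega u^2+2\int_\Omega v^2=:g(t)\quad\text{for all } t\in(0,T_{\max}).
\end{align*}

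\textbf{Closing the estimate.} By \eqref{eq:3.2}, $\int_t^{t+\tau}g\le 4L$. We can now proceed in either of two ways.

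\emph{Via Lemma \ref{lem:2.3}.} Take $h\equiv 0$ (or a small positive constant, if positivity of $L_2$ is required). For this we also need the bound $\int_t^{t+\tau}y\le L_1$. Integrating the inequality $y'+2y\le g$ over $(t,t+\tau)$ gives
\begin{align*}
2\int_t^{t+\tau}y\le y(t)+4L ,
\end{align*}
which still contains the unknown $y(t)$, so it does not close by itself. Instead, we obtain the $L_1$ bound from the $L$-bound of the proof of Lemma \ref{lem:3.2}: there $w\in L^\infty((0,T_{\max});W^{1,q}(\Omega))$ for $q<2$, but that does not control $\int_\Omega|\nabla w|^2$. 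So we use a second energy identity. Test the $w$-equation with $w$:
\begin{align*}
\frac12\frac{d}{dt}\int_\Omega w^2+\int_\Omega|\nabla w|^2+\int_\Omega w^2=\int_\Omega(u+v)w\le \int_\Omega u^2+\int_\Omega v^2+\frac12\int_\Omega w^2 .
\end{align*}
Integrate over $(t,t+\tau)$. Since $\int_\Omega w^2\le C$ by Lemma \ref{lem:3.2}, and using \eqref{eq:3.2}, this yields $\int_t^{t+\tau}y\le C+2L=:L_1$. Lemma \ref{lem:2.3} then gives $y\le C$ on $(0,T_{\max})$.

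\emph{Via variation of constants.} Alternatively, and more directly, apply variation of constants to $y'+2y\le g$:
\begin{align*}
y(t)\le y(0)e^{-2t}+\int_0^t e^{-2(t-s)}g(s)\,ds .
\end{align*}
Split $(0,t)$ into intervals of length $\tau$. On each such interval $\int g\le 4L$, so the sum is dominated by a geometric series $\sum_j 4L e^{-2j\tau}$, which is finite. This gives $y(t)\le \|\nabla w_0\|_{L^2}^2+4L/(1-e^{-2\tau})$, where $\|\nabla w_0\|_{L^2}$ is finite because $w_0\in W^{1,\infty}(\Omega)$.

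\textbf{Expected obstacle.} The only delicate point is verifying the hypotheses of Lemma \ref{lem:2.3}, in particular the averaged integrability of $y$. The $w$-tested identity above handles that. The computations themselves are standard, and $y\in C^1$ on $(0,T_{\max})$ by classical regularity.
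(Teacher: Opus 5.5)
Your proof is correct, and your first closing route is the paper's argument. The paper also tests the $w$-equation with $w$, using \eqref{eq:3.2} and Lemma \ref{lem:3.2}, to get $\int_t^{t+\tau}\int_\Omega|\nabla w|^2\le C$; it then tests with $-\Delta w$ to get $y'+2y\le C\bigl(\int_\Omega u^2+\int_\Omega v^2\bigr)$ and closes with Lemma \ref{lem:2.3}.

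Your variation-of-constants alternative is also valid and slightly more economical. Because of the damping term $2y$, the geometric series closes the estimate without the averaged bound on $y$. Like the paper's route, it needs only the standard fact that $\int_\Omega|\nabla w(\cdot,t)|^2$ stays bounded as $t\to 0$.
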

\begin{proof}
Multiplying the third equation of \eqref{eq:1.3} by \( w \) and applying Young's inequality yields  
\begin{align}{\label{eq:3.3}}
\frac{1}{2} \frac{d}{dt} \int_{\Omega} w^2 &= -\int_{\Omega} |\nabla w|^2 + \int_{\Omega} uw + \int_{\Omega} vw-\int_{\Omega}w^2 \notag\\&\leq -\int_{\Omega} |\nabla w|^2 + \frac{1}{2} \int_{\Omega} u^2+ \frac{1}{2} \int_{\Omega} v^2.    
\end{align}
For all \( t \in (0, T_{{max}}-\tau)\), integrate both sides of \eqref{eq:3.3} from \(t\) to \(t+\tau\). 
This, together with \eqref{eq:3.2} and Lemma \ref{lem:3.2}  leads to  
\begin{align}{\label{eq:3.4}}
&\int_{t}^{t+\tau} \int_{\Omega} |\nabla w(\cdot, s)|^2 \,\mathrm{d}x\mathrm{d}s \notag\\
\leq& \frac{1}{2} \int_{t}^{t+\tau} \int_{\Omega} u^2(\cdot, s) \,\mathrm{d}x\mathrm{d}s +\frac{1}{2} \int_{t}^{t+\tau} \int_{\Omega} v^2(\cdot, s) \,\mathrm{d}x\mathrm{d}s+ \frac{1}{2} \int_{\Omega} w^2(\cdot, t) \leq c_1.    
\end{align}
Let \( y(t) := \frac{1}{2} \int_{\Omega} |\nabla w(\cdot, t)|^2 \). Through simple calculations, we have 
\begin{align*}
y'(t) + 2y(t) &=\int_{\Omega} \nabla w\cdot \nabla w_t+\int_{\Omega} |\nabla w(\cdot, t)|^2 \\&=-\int_{\Omega}w_t \Delta w+\int_{\Omega} |\nabla w(\cdot, t)|^2\\&= -\int_{\Omega} (\Delta w)^2 - \int_{\Omega} u \Delta w- \int_{\Omega} v \Delta w\\ 
&\leq \frac{1}{2} \int_{\Omega} u^2+\frac{1}{2} \int_{\Omega} v^2.
\end{align*}  
This, combine with \eqref{eq:3.2} - \eqref{eq:3.4} and Lemma \ref{lem:2.3} entails that \( \sup_{t \in (0, T_{\text{max}})} y(t) \leq c_2 \) for some \( c_2 > 0 \), which completes the proof. 
\end{proof}
 
The lemma below is simple but essential for deriving an  \( L \ln L \) estimate of \( u \) and \( v \).

\begin{lemma}\label{lem:3.4}
Let \( l(t):= -\int_{\Omega} u \ln w  -\int_{\Omega} v \ln w \). Then
\begin{align*}
\varepsilon l'(t) + \varepsilon l(t) \leq  & 2 \varepsilon \int_{\Omega} \frac{|\nabla u|^2}{u}+ 2\varepsilon \int_{\Omega} \frac{|\nabla v|^2}{v}- \frac{\varepsilon}{2} \int_{\Omega} u \frac{|\nabla w|^2}{w^2} - \frac{\varepsilon}{2} \int_{\Omega} v \frac{|\nabla w|^2}{w^2}\notag\\& + \frac{1}{2}(\mu_1+\frac{r}{2})\int_{\Omega} u^2 \ln w+ \frac{1}{2}(\mu_2+\frac{r}{2})  \int_{\Omega} v^2 \ln w\notag- \varepsilon \int_{\Omega} \frac{u^2}{w} -\varepsilon \int_{\Omega} \frac{v^2}{w}+C.   \end{align*}
\end{lemma}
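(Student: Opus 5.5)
The plan is to differentiate $l$ along the solution, integrate by parts using the Neumann conditions, bound each resulting term separately, and finally multiply by $\varepsilon$ and add $\varepsilon l$. By Lemma \ref{lem:2.1}, $w>0$ on $\bar\Omega\times(0,T_{\max})$, so $\ln w$ and $1/w$ are admissible there. We have
\[
l'(t)=-\int_\Omega u_t\ln w-\int_\Omega v_t\ln w-\int_\Omega \frac{(u+v)\,w_t}{w}.
\]
We insert the three equations of \eqref{eq:1.3} and integrate by parts.

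\textbf{The $u$-equation contribution.} From $-\int_\Omega u_t\ln w$ we obtain
\[
\int_\Omega\frac{\nabla u\cdot\nabla w}{w}-\chi_1\int_\Omega \frac{u|\nabla w|^2}{w^{k+1}}-\int_\Omega w\ln w+\mu_1\int_\Omega u^2\ln w .
\]
The $v$-equation gives the analogous terms with $\chi_2,\mu_2$, together with the extra coupling term $-r\int_\Omega uv\ln w$.

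\textbf{The $w$-equation contribution.} From $-\int_\Omega \frac{u w_t}{w}$ we obtain
\[
\int_\Omega\frac{\nabla u\cdot\nabla w}{w}-\int_\Omega\frac{u|\nabla w|^2}{w^2}-\int_\Omega\frac{u^2}{w}-\int_\Omega\frac{uv}{w}+\int_\Omega u ,
\]
and the same with $u$ and $v$ interchanged.

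\textbf{Elementary bounds.} The following terms are handled directly:
\begin{itemize}
\item The chemotactic terms $-\chi_i\int_\Omega u w^{-k-1}|\nabla w|^2$ (and their $v$-analogues) and the terms $-\int_\Omega uv/w$ are nonpositive, so we drop them.
\item $-\int_\Omega w\ln w\le |\Omega|/e$.
\item $\int_\Omega u+\int_\Omega v\le 2M$ by Lemma \ref{lem:3.1}.
\item The two cross terms are estimated by Young's inequality: $2\int_\Omega \frac{\nabla u\cdot\nabla w}{w}\le 2\int_\Omega\frac{|\nabla u|^2}{u}+\frac12\int_\Omega \frac{u|\nabla w|^2}{w^2}$, and similarly for $v$. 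This leaves $-\frac12\int_\Omega u|\nabla w|^2/w^2$ and $-\frac12\int_\Omega v|\nabla w|^2/w^2$, exactly as in the claim.
\item The term $-\int_\Omega u^2/w$ is only partly retained, since a portion of it is spent on absorbing other terms below.
\end{itemize}

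\textbf{The term $\varepsilon l$.} We have $\varepsilon l=-\varepsilon\int_\Omega(u+v)\ln w$. On $\{w\ge1\}$ this is nonpositive. On $\{w<1\}$ we use
\[
u|\ln w|\le \delta\frac{u^2}{w}+\frac{1}{4\delta}\,w\ln^2 w ,
\]
where $w\ln^2w$ is bounded on $(0,1)$. With $\delta$ chosen small, this is absorbed by part of $-\varepsilon\int_\Omega u^2/w$ at the cost of a constant.

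\textbf{Main obstacle: logistic and coupling terms.} The delicate step is the logistic terms $\varepsilon\mu_i\int u^2\ln w$ together with the coupling term $-\varepsilon r\int_\Omega uv\ln w$, because $\ln w$ changes sign. I would first apply $uv\le\frac12(u^2+v^2)$ and split $\Omega$ into $\{w\ge1\}$ and $\{w<1\}$.
\begin{itemize}
\item On $\{w\ge1\}$, for $\varepsilon\le\frac12$, all these contributions are dominated by $\frac12(\mu_1+\frac r2)u^2\ln w$ and $\frac12(\mu_2+\frac r2)v^2\ln w$.
\item On $\{w<1\}$, the logistic parts are nonpositive and the coupling part is $\le\frac{\varepsilon r}{2}(u^2+v^2)|\ln w|$.
\end{itemize}
The step I expect to fail is absorbing this last piece $\frac{\varepsilon r}{2}(u^2+v^2)|\ln w|$ on $\{w<1\}$, and I see no argument that closes it as stated. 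Absorbing it into the retained $-\varepsilon\int_\Omega u^2/w$ via $|\ln w|\le \eta/w$ only works on $\{w<w_\eta\}$. On $\{w_\eta\le w<1\}$ a leftover $c\int_\Omega u^2$ remains, which is not bounded pointwise in time. Trying to trade it against $\frac12\mu_1\int_\Omega u^2\ln w$ does not help, because on $\{w<1\}$ that term is negative and already carries the smaller coefficient $\frac12\mu_1\le\varepsilon\mu_1$ only in the opposite direction. So the sign bookkeeping of the $\ln w$-weighted quadratic terms has to be checked carefully, and an extra $c\int_\Omega(u^2+v^2)$ on the right-hand side may be unavoidable. Such a term would be harmless later by \eqref{eq:3.2} and Lemma \ref{lem:2.3}, but it is not part of the inequality as claimed. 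Once this is settled, collecting all pieces yields the stated inequality with a constant $C$ depending on $M$, $|\Omega|$, $r$ and $\varepsilon$.
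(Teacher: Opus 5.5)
\textbf{There is a genuine gap: your proposal does not prove the stated inequality, and no argument can.} Your diagnosis is correct, and the paper's proof fails at exactly the step you isolate.

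\textbf{Where the paper's proof breaks.} In \eqref{eq:3.6} the coupling term is disposed of via $-r\int_\Omega uv\ln w\le\frac r2\int_\Omega u^2\ln w+\frac r2\int_\Omega v^2\ln w$. This is false on $\{w<1\}$ wherever $uv>0$: the left side is positive there and the right side negative. There are two further slips. First, \eqref{eq:3.9} uses up the whole of $-\varepsilon\int_\Omega u^2/w$ to absorb $-\varepsilon\int_\Omega u\ln w$, yet the final display still keeps it. Second, \eqref{eq:3.8} produces $\frac12(\mu_1+\frac r2)\int_\Omega u^2\ln u$, not $\int_\Omega u^2\ln w$.

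\textbf{The displayed inequality is false.} Take the spatially homogeneous solution with $(u_0,v_0,w_0)\equiv(m,0,0)$, which is admissible under \eqref{eq:1.4}. As $t\searrow0$ we have $u\to m$, $w\sim mt$ and $v=O(t^2)$. All gradient terms vanish, and the $u^2/w$, $v^2/w$ terms cancel between the two sides. Hence
\[
\text{LHS}-\text{RHS}=|\Omega|\,|\ln w|\Big(\varepsilon m+\big(\tfrac12-\varepsilon\big)\mu_1m^2+\tfrac r4m^2\Big)+O(1)\longrightarrow+\infty
\]
for every $\varepsilon\le\frac12$, and for every $\varepsilon>0$ once $m$ is small. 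The $(\frac12-\varepsilon)\mu_1m^2+\frac r4m^2$ part is precisely the mismatch of the $\ln w$-weighted logistic terms on $\{w<1\}$ that you noticed. The $\varepsilon m$ part comes from $\varepsilon l$ once the full $-\varepsilon\int_\Omega u^2/w$ is kept on the right. So no constant $C$ works. Your choice to retain only part of $-\varepsilon\int_\Omega u^2/w$ is also forced: with $\ln u$ in place of $\ln w$ on the right, the same example still blows up whenever $m<1/\mu_1$.

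\textbf{What is true, and the idea you are missing.} What Lemma \ref{lem:3.5} actually uses in \eqref{eq:3.21} is the version with $\frac12(\mu_1+\frac r2)\int_\Omega u^2\ln u$ and $\frac12(\mu_2+\frac r2)\int_\Omega v^2\ln v$ on the right, with the $u^2/w$, $v^2/w$ terms dropped. That version holds for every $\varepsilon>0$. The missing step is to trade $\ln w$ for $\ln u$ rather than keep $\ln w$.
\begin{itemize}
\item By $xy\le x\ln x+e^{y-1}$, valid for all real $y$, one gets $\varepsilon\mu_1u^2\ln w\le\frac{\mu_1}{4}u^2\ln u+\frac{\mu_1}{8e}w^{8\varepsilon}$ on all of $\Omega$, whatever the sign of $\ln w$. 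The integral $\int_\Omega w^{8\varepsilon}$ is bounded by Lemma \ref{lem:3.2}, and the $v$-logistic term is handled the same way.
\item The coupling term is nonpositive on $\{w\ge1\}$.
\item On $\{w<1\}$ your own split $uv|\ln w|\le\frac12(u^2+v^2)(\delta/w+|\ln w_\delta|)$ works. The $\delta$-part is absorbed by a fraction of $-\varepsilon\int_\Omega(u^2+v^2)/w$, alongside the piece you already spend on $\varepsilon l$.
\item The leftover $c\int_\Omega(u^2+v^2)$ you worried about is absorbed by $cs^2\le\eta s^2\ln s+C_\eta$ with $\eta$ small.
\item Finally, the coefficients of $\int_\Omega u^2\ln u$ and $\int_\Omega v^2\ln v$ can be raised to $\frac12(\mu_i+\frac r2)$ using $s^2\ln s\ge-\frac{1}{2e}$.
\end{itemize}
With this step no extra $\int_\Omega(u^2+v^2)$ term survives. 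Your argument otherwise parallels the paper's \eqref{eq:3.5}--\eqref{eq:3.12}, and with the faulty step repaired it proves the corrected lemma.
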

\begin{proof}
Let \(l_1(t)=-\int_{\Omega} u \ln w\), \(l_2(t)= -\int_{\Omega} v \ln w \). By directly differentiating, followed by applying integration by parts and Young's inequality, we can get
\begin{align}\label{eq:3.5}
l_1'(t)&= -\int_{\Omega} u_t \ln w - \int_{\Omega} \frac{u}{w} w_t\notag\\
\quad&= -\int_{\Omega} \left( \Delta u - \chi_1 \nabla \cdot \left( u \frac{\nabla w}{w^k} \right) + w - \mu_1 u^2 \right) \ln w- \int_{\Omega} \frac{u}{w} (\Delta w +u+v- w )\notag\\
\quad&= 2 \int_{\Omega} \nabla u \cdot \frac{\nabla w}{w} - \chi_1 \int_{\Omega} \frac{u}{w^{1+k}} |\nabla w|^2-\int_{\Omega} w \ln w + \mu_1 \int_{\Omega} u^2 \ln w\notag\\&\quad- \int_{\Omega} \frac{u}{w^2} |\nabla w|^2 - \int_{\Omega} \frac{u^2}{w}-\int_{\Omega} \frac{uv}{w}+ \int_{\Omega} u   \notag\\
\quad& \leq 2 \int_{\Omega} \frac{|\nabla u|^2}{u}  
- \frac{1}{2} \int_{\Omega} u \frac{|\nabla w|^2}{w^2} -\int_{\Omega} w\ln w  +\mu_1 \int_{\Omega} u^2 \ln w- \int_{\Omega} \frac{u^2}{w}+M.  
\end{align}
Using the same method, we can attain
\begin{align}\label{eq:3.6}
l_2'(t)&= -\int_{\Omega} v_t \ln w - \int_{\Omega} \frac{v}{w} w_t\notag\\
\quad&= -\int_{\Omega} \left( \Delta v - \chi_2 \nabla \cdot \left( v \frac{\nabla w}{w^k} \right) + w+ruv - \mu_2 v^2 \right) \ln w- \int_{\Omega} \frac{v}{w} (\Delta w +u+v- w )\notag\\
\quad&= 2 \int_{\Omega} \nabla v \cdot \frac{\nabla w}{w} - \chi_2 \int_{\Omega} \frac{v}{w^{1+k}} |\nabla w|^2-\int_{\Omega} w \ln w -r\int_{\Omega}uv\ln w+ \mu_2 \int_{\Omega} v^2 \ln w\notag\\&\quad- \int_{\Omega} \frac{v}{w^2} |\nabla w|^2  - \int_{\Omega} \frac{v^2}{w} -\int_{\Omega} \frac{uv}{w}+ \int_{\Omega} v \notag\\
\quad &\leq  2 \int_{\Omega} \frac{|\nabla v|^2}{v}  
- \frac{1}{2} \int_{\Omega}v \frac{|\nabla w|^2}{w^2}  -\int_{\Omega} w \ln w+\frac{r}{2}\int_{\Omega} u^2 \ln w+\frac{r}{2}\int_{\Omega} v^2 \ln w+ \mu_2 \int_{\Omega} v^2 \ln w- \int_{\Omega} \frac{v^2}{w}+M. 
\end{align} 
Combing \eqref{eq:3.5} and \eqref{eq:3.6}, we have
\begin{align}\label{eq:3.7}
\varepsilon l'(t) + \varepsilon l(t) \leq  & 2 \varepsilon \int_{\Omega} \frac{|\nabla u|^2}{u}+ 2\varepsilon \int_{\Omega} \frac{|\nabla v|^2}{v}- \frac{\varepsilon}{2} \int_{\Omega} u \frac{|\nabla w|^2}{w^2} - \frac{\varepsilon}{2} \int_{\Omega} v \frac{|\nabla w|^2}{w^2}- 2\varepsilon \int_{\Omega} w \ln w\notag\\& + (\mu_1+\frac{r}{2}) \varepsilon \int_{\Omega} u^2 \ln w+ (\mu_2+\frac{r}{2}) \varepsilon \int_{\Omega} v^2 \ln w\notag- \varepsilon \int_{\Omega} \frac{u^2}{w} -\varepsilon \int_{\Omega} \frac{v^2}{w}\notag\\&- \varepsilon \int_{\Omega} u \ln w - \varepsilon \int_{\Omega} v \ln w+2\varepsilon M .     
\end{align}
By using an elementary inequality that \( xy \leq x \ln x + e^{y-1} \) for all \( x > 0 \) and \( y > 0 \) and Lemma \ref{lem:3.2}, we can derive that
\begin{align}\label{eq:3.8}
(\mu_1+\frac{r}{2}) \varepsilon \int_{\Omega} u^2 \ln w &= \frac{1}{4}(\mu_1+\frac{r}{2}) \int_{\Omega} u^2 \ln w^{4\varepsilon}\notag\\
&\leq \frac{1}{2}(\mu_1+\frac{r}{2}) \int_{\Omega} u^2 \ln u +\frac{1}{4} (\mu_1+\frac{r}{2}) \int_{\Omega} w^{4\varepsilon}\notag\\
&\leq \frac{1}{2}(\mu_1+\frac{r}{2}) \int_{\Omega} u^2 \ln u + c_1,   
\end{align}
where \(\varepsilon \geq \frac{1}{4}\). Applying Young's inequality and Lemma \ref{lem:3.2} entails that
\begin{align}\label{eq:3.9}
-\varepsilon\int_{\Omega} u \ln w \leq  \varepsilon\int_{\Omega} \frac{u^2}{w} + \frac{\varepsilon}{4} \int_{\Omega} w \ln^2 w
\leq \varepsilon \int_{\Omega} \frac{u^2}{w} + c_2.   
\end{align}
By employing the same methodology, we can infer that
\begin{align}\label{eq:3.10}
(\mu_2+\frac{r}{2}) \varepsilon \int_{\Omega} v^2 \ln w
\leq \frac{1}{2}(\mu_2+\frac{r}{2}) \int_{\Omega} v^2 \ln v + c_1,    
\end{align}
\begin{align}\label{eq:3.11}
-\varepsilon\int_{\Omega} v \ln w \leq  \varepsilon \int_{\Omega} \frac{v^2}{w} + \frac{ \varepsilon}{4} \int_{\Omega} w \ln^2 w
\leq \varepsilon \int_{\Omega} \frac{v^2}{w} + c_2,    
\end{align}
\begin{align}\label{eq:3.12}
 -2\varepsilon \int_{\Omega} w \ln w\leq c_3,
\end{align}
where  $c_1$, $c_2$, $c_3 > 0$ . 
By combining  \eqref{eq:3.7}-\eqref{eq:3.12}, and utilizing Lemma \ref{lem:3.1}, we have 
\begin{align}
\varepsilon l'(t) + \varepsilon l(t) \leq  & 2 \varepsilon \int_{\Omega} \frac{|\nabla u|^2}{u}+ 2\varepsilon \int_{\Omega} \frac{|\nabla v|^2}{v}- \frac{\varepsilon}{2} \int_{\Omega} u \frac{|\nabla w|^2}{w^2} - \frac{\varepsilon}{2} \int_{\Omega} v \frac{|\nabla w|^2}{w^2}\notag\\& + \frac{1}{2}(\mu_1+\frac{r}{2})\int_{\Omega} u^2 \ln w+ \frac{1}{2}(\mu_2+\frac{r}{2})  \int_{\Omega} v^2 \ln w\notag- \varepsilon \int_{\Omega} \frac{u^2}{w} -\varepsilon \int_{\Omega} \frac{v^2}{w}+2\varepsilon M.    
\end{align}
\end{proof}
We are now poised to establish the first pivotal component in proving the primary result, specifically, an \( L \ln L \) bound for  both \( u \) and \( v \).
\begin{lemma}\label{lem:3.5}
There exists \( C > 0 \) such that
\begin{align*}
\int_{\Omega} u(\cdot, t) \ln u(\cdot, t)+\int_{\Omega} v(\cdot, t) \ln v(\cdot, t) \leq C  \quad \text{for all } t \in (0, T_{\text{max}}).
\end{align*}
\end{lemma}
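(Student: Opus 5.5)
We will prove the lemma by studying the energy functional announced in the introduction,
\begin{align*}
y(t) := \int_{\Omega} u \ln u+\int_{\Omega} v \ln v +\varepsilon\, l(t) + \frac{1}{2} \int_{\Omega} |\nabla w|^2,
\end{align*}
with $l$ as in Lemma \ref{lem:3.4} and $\varepsilon>0$ fixed below. The aim is a differential inequality $y'(t)+y(t)\le C$. Together with a lower bound for the $l$ part, this gives the $L\ln L$ bound.

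\textbf{Entropy terms.} Testing the first equation of \eqref{eq:1.3} by $\ln u+1$ gives
\begin{align*}
\frac{d}{dt}\int_\Omega u\ln u=-\int_\Omega \frac{|\nabla u|^2}{u}+\chi_1\int_\Omega \frac{\nabla u\cdot\nabla w}{w^k}+\int_\Omega w(\ln u+1)-\mu_1\int_\Omega u^2(\ln u+1).
\end{align*}
The $v$-equation is handled the same way, with the extra term $r\int_\Omega uv(\ln v+1)$, which we split by Young's inequality into $\frac r2\int_\Omega (u^2+v^2)\ln(\cdot)$-type terms plus constants.

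The key step is the weakly singular cross term. Since $w^{-2k}=w^{-2}\cdot w^{2-2k}$, Young's inequality gives
\begin{align*}
\chi_1\int_\Omega \frac{\nabla u\cdot\nabla w}{w^k}\le \frac14\int_\Omega\frac{|\nabla u|^2}{u}+\chi_1^2\int_\Omega u\frac{|\nabla w|^2}{w^{2}}w^{2-2k}.
\end{align*}
We split $\{w\le A\}$ and $\{w>A\}$. On $\{w\le A\}$ we have $w^{2-2k}\le A^{2-2k}$. Since $k<1$ we can take $A$ small, so this piece is absorbed by the term $-\frac{\varepsilon}{2}\int_\Omega u|\nabla w|^2/w^2$ from Lemma \ref{lem:3.4}. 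On $\{w>A\}$ we use instead $u|\nabla w|^2 w^{-2k}\le A^{-2k}u|\nabla w|^2$ and $u|\nabla w|^2\le \eta u^2\ln(u+e)+C_\eta|\nabla w|^4/\ln(\ldots)$. Simpler, we use $\le \eta u^2+C|\nabla w|^4$ and control $\int_\Omega|\nabla w|^4$ by Gagliardo--Nirenberg. Since $\|\nabla w\|_{L^2}$ is bounded by Lemma \ref{lem:3.3}, we get $\int_\Omega|\nabla w|^4\le c\int_\Omega|\Delta w|^2+c$, and the $-\int_\Omega|\Delta w|^2$ term coming from $\frac12\frac{d}{dt}\int_\Omega|\nabla w|^2$ absorbs it after scaling the coefficient of the gradient part suitably. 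The remaining $\frac{d}{dt}\frac12\int_\Omega|\nabla w|^2$ terms, $-\int_\Omega(u+v)\Delta w$, are bounded by $\frac14\int_\Omega|\Delta w|^2+\int_\Omega (u^2+v^2)$.

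\textbf{Collecting terms.} The $\frac{|\nabla u|^2}{u}$ contributions from Lemma \ref{lem:3.4} (coefficient $2\varepsilon$) are absorbed by the diffusion once $\varepsilon\le\frac14$. Condition \eqref{eq:3.8} needs $\varepsilon\ge\frac14$, so we take $\varepsilon=\frac14$. The positive terms $u^2$, $u^2\ln w$ and $\int_\Omega w\ln u$ are then dominated by the logistic damping $-\mu_1\int_\Omega u^2\ln u$. The $u^2\ln w$ terms of Lemma \ref{lem:3.4} are estimated via $xy\le x\ln x+e^{y-1}$ as in \eqref{eq:3.8}, taking half of the damping; the leftover $u^2$ is bounded by $\delta u^2\ln u+C$. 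Adding $y$ itself and using $u\ln u\le u^2\ln u+C$ together with Lemmas \ref{lem:3.1}--\ref{lem:3.3}, we obtain $y'+y\le C$, hence $y\le C$ on $(0,T_{\max})$.

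\textbf{Lower bound for $l$.} We have $-\int_\Omega u\ln w\ge -\int_\Omega u\ln_+ w\ge -\frac12\int_\Omega u\ln u-C\int_\Omega w^{2}-C$, again by $xy\le x\ln x+e^{y-1}$ and Lemma \ref{lem:3.2}. So $\varepsilon l$ cannot cancel more than a fraction of $\int_\Omega u\ln u+\int_\Omega v\ln v$, and the lemma follows. The main obstacle is the cross term on the region where $w$ is small: it must be absorbed by the $\int_\Omega u|\nabla w|^2w^{-2}$ dissipation from $l$, and the competing constraints on $\varepsilon$ and $A$ have to be made compatible.
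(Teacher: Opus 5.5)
Your route is the paper's: the same functional $\int_\Omega u\ln u+\int_\Omega v\ln v+\varepsilon l+\frac12\int_\Omega|\nabla w|^2$, and the same treatment of the weak singularity. Your split at the level $w=A$ is the pointwise bound $w^{-2k}\le A^{2-2k}w^{-2}+A^{-2k}$, which is the paper's $\frac{\chi_1^2}{4\varepsilon}w^{-2k}\le\frac{\varepsilon}{2}w^{-2}+c_1$; the $w^{-2}$ part is absorbed by the dissipation $-\frac{\varepsilon}{2}\int_\Omega u|\nabla w|^2/w^2$ from Lemma~\ref{lem:3.4}. You also use the same Gagliardo--Nirenberg step with Lemma~\ref{lem:3.3} and the same lower bound via $xy\le x\ln x+e^{y-1}$. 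Fixing $\varepsilon=\frac14$ and compensating through $\eta$ (or the weight of $\frac12\int_\Omega|\nabla w|^2$), with the resulting multiple of $\int_\Omega u^2$ pushed into the superlinear damping, is a harmless variant of the paper's choice $\varepsilon=\min\{\frac13,\frac1{4c_4}\}$. The restriction $\varepsilon\ge\frac14$ you read off from \eqref{eq:3.8} is not actually needed, since Lemma~\ref{lem:3.2} and H\"older bound $\int_\Omega w^\theta$ for every $\theta>0$.

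The step that does not close is ``the positive terms are dominated by the logistic damping'', applied to the coupling term $r\int_\Omega uv(\ln v+1)$. With your symmetric Young split, $\frac r2 u^2\ln(\cdot)$ must fit under $\mu_1u^2\ln u$ and $\frac r2 v^2\ln v$ under $\mu_2 v^2\ln v$. That requires roughly $r<2\min\{\mu_1,\mu_2\}$, and less once part of the damping is spent on the Lemma~\ref{lem:3.4} terms. No choice of Young weights saves this in general. Along $u=\alpha v$, $v\to\infty$,
\[
ruv\ln v-\mu_1u^2\ln u-\mu_2v^2\ln v=(r\alpha-\mu_1\alpha^2-\mu_2)\,v^2\ln v+O(v^2),
\]
which for $\alpha=\frac{r}{2\mu_1}$ is positive of order $v^2\ln v$ as soon as $r^2>4\mu_1\mu_2$, and nothing else in your inequality absorbs it. So as written the argument covers only $r^2<4\mu_1\mu_2$. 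That includes the regime $r=\mu_2-\mu_1$, $\mu_2<3\mu_1$ of Theorem~\ref{thm:1.2}, but not the arbitrary $r,\mu_1,\mu_2>0$ claimed here. The paper's \eqref{eq:3.19} asserts the same domination without proof, so this is a shared defect rather than a departure from its route, but it is a real gap.

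The repair is to weight the $u$-entropy and work with $B\int_\Omega u\ln u+\int_\Omega v\ln v$.
\begin{itemize}
\item For $v>1$, $ruv\ln v\le\frac{r^2}{\mu_2}u^2\ln v+\frac{\mu_2}{4}v^2\ln v$.
\item By $xy\le x\ln x+e^{y-1}$ with $x=u^2$, $y=2\ln v$, one has $u^2\ln v\le u^2\ln u+\frac{v^2}{2e}$.
\item Hence $B\ge\frac{2r^2}{\mu_1\mu_2}$ lets $-B\mu_1\int_\Omega u^2\ln u-\mu_2\int_\Omega v^2\ln v$ absorb the coupling term up to $C\int_\Omega v^2+C$.
\end{itemize}
The factor $B$ on the $u$-cross term only makes the splitting level and $\eta$ depend on $B$. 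Treat the $u^2\ln w$ and $v^2\ln w$ terms of Lemma~\ref{lem:3.4} with $y=\theta\ln w$, $\theta$ large, so they cost an arbitrarily small fraction of the damping plus $C\int_\Omega w^\theta$. The final lower bound then only needs $\varepsilon<1\le B$.

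Also start the Gronwall argument at some $t_0>0$, because $l(0)=-\int_\Omega(u_0+v_0)\ln w_0$ may be infinite when $w_0$ vanishes. On $(0,t_0]$ the bound follows from continuity of $u,v$.
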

\begin{proof}
Let \( y(t) := \int_{\Omega} u(\cdot, t) \ln u(\cdot, t) +\int_{\Omega} v(\cdot, t) \ln v(\cdot, t) + \frac{1}{2} \int_{\Omega} |\nabla w(\cdot, t)|^2 \).
By differentiating the function with respect to time, we can acquire
\begin{align*}
y'(t) = \int_{\Omega} (\ln u + 1) u_t +\int_{\Omega} (\ln v + 1) v_t+ \int_{\Omega} \nabla w \cdot \nabla w_t
:= I_1 +I_2+ I_3.    
\end{align*}
Making use of the first equation of \eqref{eq:1.3} and integration by parts implies that
\begin{align}\label{eq:3.13}
I_1 &= \int_{\Omega} (\ln u + 1)\left( \Delta u - \chi_1 \nabla \cdot \left(  \frac{u}{w^k}\nabla w \right) + w - \mu_1 u^2 \right)\notag\\
&= -\int_{\Omega} \frac{|\nabla u|^2}{u} + \chi_1 \int_{\Omega} \frac{\nabla u \cdot \nabla w}{w^k} + \int_{\Omega} (w- \mu_1 u^2)(\ln u + 1).      
\end{align}
By applying Young's inequality with \( \varepsilon > 0 \), which will be determined later, we can obtain
\begin{align}\label{eq:3.14}
\chi_1 \int_{\Omega} \frac{\nabla u \cdot \nabla w}{w^k}
&\leq \varepsilon \int_{\Omega} \frac{|\nabla u|^2}{u} + \frac{\chi_1^2}{4\varepsilon} \int_{\Omega} u \frac{|\nabla w|^2}{w^{2k}}\notag\\
&\leq \varepsilon \int_{\Omega} \frac{|\nabla u|^2}{u} + \frac{\varepsilon}{2} \int_{\Omega} u \frac{|\nabla w|^2}{w^2} + c_1 \int_{\Omega} u |\nabla w|^2\notag\\
&\leq \varepsilon \int_{\Omega} \frac{|\nabla u|^2}{u} + \frac{\varepsilon}{2} \int_{\Omega} u \frac{|\nabla w|^2}{w^2} + \varepsilon \int_{\Omega} |\nabla w|^4 + c_2 \int_{\Omega} u^2,  \end{align}
where \( c_1 > 0 \) and \( c_2 > 0 \). Applying Gagliardo–Nirenberg interpolation inequality and Lemma \ref{lem:3.3} deduces that
\begin{align}\label{eq:3.15}
\varepsilon \int_{\Omega} |\nabla w|^4 
&\leq c_3 \varepsilon \int_{\Omega} (\Delta w)^2 \int_{\Omega} |\nabla w|^2 + c_3 \varepsilon \left( \int_{\Omega} |\nabla w|^2 \right)^2\notag\\&\leq  c_4\varepsilon \int_{\Omega} (\Delta w)^2 + c_5,    
\end{align}
where \( c_3, c_4, \) and \( c_5 \) are positive constants. By integrating \eqref{eq:3.13}-\eqref{eq:3.15}, we can conclude that
\begin{align}\label{eq:3.16}
\frac{d}{dt} \int_{\Omega} u\ln u + \int_{\Omega}u\ln u
&\leq (\varepsilon-1)\int_{\Omega} \frac{|\nabla u|^2}{u} +  \frac{\varepsilon}{2}\int_{\Omega} u\frac{|\nabla w|^2}{w^2} +\varepsilon c_4 \int_{\Omega} (\Delta w)^2 + c_2 \int_{\Omega} u^2  \notag\\&\quad+ \int_{\Omega} (w- \mu_1 u^2)(\ln u + 1)+ \int_{\Omega} u \ln u+c_5.      
\end{align}
Through the application of the same technique, we have
\begin{align*}
I_2 &= \int_{\Omega} (\ln v + 1)\left( \Delta v - \chi_2 \nabla \cdot \left(  \frac{v}{w^k}\nabla w \right) + w +ruv- \mu_2 v^2 \right)\notag\\
&= -\int_{\Omega} \frac{|\nabla v|^2}{v} + \chi_2 \int_{\Omega} \frac{\nabla v \cdot \nabla w}{w^k} + \int_{\Omega} (w+ruv- \mu_2 v^2)(\ln v + 1).      
\end{align*}
Then, it is straightforward to obtain
\begin{align}\label{eq:3.17}
\frac{d}{dt} \int_{\Omega} v\ln v + \int_{\Omega}v\ln v
&\leq (\varepsilon-1)\int_{\Omega} \frac{|\nabla v|^2}{v} +  \frac{\varepsilon}{2}\int_{\Omega} v\frac{|\nabla w|^2}{w^2} +\varepsilon c_4 \int_{\Omega} (\Delta w)^2 + c_2 \int_{\Omega} v^2  \notag\\&\quad+ \int_{\Omega} (w+ruv- \mu_2 v^2)(\ln v + 1)+ \int_{\Omega} v \ln v+c_5.   
\end{align}
Using integration by parts, Young’s inequality and Lemma \ref{lem:3.3} implies that
\begin{align}\label{eq:3.18}
I_3 + \frac{1}{2} \int_{\Omega} |\nabla w|^2 &=-\int_{\Omega}\Delta w (\Delta w+u+v-w)+\frac{1}{2} \int_{\Omega} |\nabla w|^2 \notag\\&=-\int_{\Omega} (\Delta w)^2 - \int_{\Omega} u \Delta w- \int_{\Omega} v \Delta w +\int_{\Omega}w\Delta w+\frac{1}{2}\int_{\Omega} |\nabla w|^2 \notag\\  
&\leq -\frac{1}{2} \int_{\Omega} (\Delta w)^2 +\int_{\Omega} u^2+\int_{\Omega} v^2.
\end{align}
One can verify that
\begin{align}\label{eq:3.19}
&\left( c_2 +1 \right) \int_{\Omega} u^2 +\left( c_2 +1 \right) \int_{\Omega} v^2+ \int_{\Omega} u \ln u+ \int_{\Omega} v\ln v \notag\\&+ \int_{\Omega} (w - \mu_1 u^2)(\ln u + 1) + \int_{\Omega} (w+ruv - \mu_2 v^2)(\ln v + 1)\notag\\  
\leq& -\frac{1}{2}(\mu_1+\frac{r}{2}) \int_{\Omega} u^2\ln u -\frac{1}{2}(\mu_2+\frac{r}{2}) \int_{\Omega} v^2\ln v+ c_6    
\end{align}
for some \(c_6 > 0\). Based on\eqref{eq:3.17}-\eqref{eq:3.19}, we find that
\begin{align}\label{eq:3.20}
y'(t) + y(t) &\leq (\varepsilon - 1) \int_{\Omega} \frac{|\nabla u|^2}{u} + \frac{\varepsilon}{2} \int_{\Omega} u \frac{|\nabla w|^2}{w^2} +(\varepsilon - 1) \int_{\Omega} \frac{|\nabla v|^2}{v} + \frac{\varepsilon}{2} \int_{\Omega}v\frac{|\nabla w|^2}{w^2}
\notag\\&\quad + \left(2 \varepsilon c_4 - \frac{1}{2} \right) \int_{\Omega} (\Delta w)^2
-\frac{1}{2}(\mu_1+\frac{r}{2}) \int_{\Omega} u^2 \ln u-\frac{1}{2}(\mu_2+\frac{r}{2}) \int_{\Omega} v^2 \ln v + c_7,      
\end{align}
where \( c_7 = 2c_5 + c_6  \). 
Using lemma \ref{lem:3.4}, we have 
\begin{align}\label{eq:3.21}
y'(t) + y(t) + \varepsilon l'(t) + \varepsilon l(t) \leq &(-1 + 3\varepsilon) \int_{\Omega} \frac{|\nabla u|^2}{u} + (-1 + 3\varepsilon) \int_{\Omega} \frac{|\nabla v|^2}{v} \notag\\&+\left(2\varepsilon c_4 - \frac{1}{2}\right) \int_{\Omega} (\Delta w)^2 + c_8.    
\end{align}
Choosing \( \varepsilon = \min \left\{ \frac{1}{3}, \frac{1}{4c_4} \right\} \) deduces that
\begin{align*}
y'(t) + y(t) + \varepsilon l'(t) + \varepsilon l(t) \leq c_9.  \end{align*}
Applying Gronwall's inequality to this entails that
\begin{align*}
\int_{\Omega} u \ln u +\int_{\Omega} v \ln v - \varepsilon \int_{\Omega} u \ln w  - \varepsilon \int_{\Omega} v \ln w + \frac{1}{2} \int_{\Omega} |\nabla w|^2 \leq c_{10}
\end{align*}
for some \( c_{10} > 0 \). This, together with the inequality that \( xy \leq x \ln x + e^{y-1} \) for all \( x > 0 \) and \( y > 0 \) implies that
\begin{align*}
\int_{\Omega} u \ln u+\int_{\Omega} v \ln v &\leq \varepsilon \int_{\Omega} u \ln w +\varepsilon \int_{\Omega} v \ln w+c_{10} \notag\\
&\leq \varepsilon \int_{\Omega} u \ln u + \frac{\varepsilon}{e} \int_{\Omega} w + \varepsilon \int_{\Omega} v \ln v+ \frac{\varepsilon} {e} \int_{\Omega} w+c_{10}\notag\\
&\leq \varepsilon \int_{\Omega} u \ln u +\varepsilon \int_{\Omega} v \ln v+ c_{11}.    
\end{align*}
Note that \( \varepsilon < 1 \); thus, the lemma follows.
\end{proof}
Next, we proceed to derive the subsequent estimate concerning the gradient of \( w \). The following lemma can be proven through a standard testing argument, as exemplified in \cite{le2024global} [Lemma 4.2].  This estimate will play a crucial role in facilitating theestablishment of an \( L^p \) bound for both \( u \) and \( v \).

\begin{lemma}\label{lem:3.6}
For any \(p > 1\), there exist positive constants \(C_1\), \(C_2\), \(C_3\) depending only on \(p\) such that 
\begin{align*}
\frac{d}{dt} \int_{\Omega} |\nabla w|^{2p} + \int_{\Omega} |\nabla w|^{2p} &\leq -C_1 \int_{\Omega} \left| \nabla |\nabla w|^p \right|^2 + C_2 \int_{\Omega} u^2 |\nabla w|^{2p - 2} \\&\quad+C_2 \int_{\Omega} v^2 |\nabla w|^{2p - 2} + C_3 \int_{\Omega} |\nabla w|^{2p}.   
\end{align*}
\end{lemma}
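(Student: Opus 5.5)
We differentiate $\int_\Omega |\nabla w|^{2p}$ directly along the third equation of \eqref{eq:1.3}. Using $w_t=\Delta w+u+v-w$, we have
\[
\frac{1}{p}\frac{d}{dt}\int_\Omega |\nabla w|^{2p}=2\int_\Omega |\nabla w|^{2p-2}\nabla w\cdot\nabla(\Delta w)+2\int_\Omega |\nabla w|^{2p-2}\nabla w\cdot\nabla(u+v)-2\int_\Omega|\nabla w|^{2p}.
\]
For the first term we use the pointwise identity $2\nabla w\cdot\nabla\Delta w=\Delta|\nabla w|^2-2|D^2w|^2$. Integrating by parts gives
\[
\int_\Omega |\nabla w|^{2p-2}\Delta|\nabla w|^2=-(p-1)\int_\Omega |\nabla w|^{2p-4}\big|\nabla|\nabla w|^2\big|^2+\int_{\partial\Omega}|\nabla w|^{2p-2}\partial_\nu|\nabla w|^2 .
\]
The bulk part equals $-\frac{4(p-1)}{p^2}\int_\Omega|\nabla|\nabla w|^p|^2$, which is the source of the dissipative term. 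We keep $-2\int_\Omega|\nabla w|^{2p-2}|D^2w|^2$ as an extra good term.

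The boundary integral is the main obstacle. We plan to use the standard fact for smooth bounded $\Omega$ and $\partial_\nu w=0$, namely $\partial_\nu|\nabla w|^2\le c_\Omega|\nabla w|^2$ on $\partial\Omega$. This gives a bound of $c\int_{\partial\Omega}|\nabla w|^{2p}=c\,\||\nabla w|^p\|_{L^2(\partial\Omega)}^2$. By the trace embedding $W^{1/2+\delta,2}(\Omega)\hookrightarrow L^2(\partial\Omega)$, followed by fractional interpolation and Young's inequality, this is at most $\eta\int_\Omega|\nabla|\nabla w|^p|^2+C_\eta\int_\Omega|\nabla w|^{2p}$. Choosing $\eta$ small lets it be absorbed into half of the dissipative term, and the remainder is one of the sources of $C_3\int|\nabla w|^{2p}$. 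This follows the argument of \cite{le2024global} [Lemma 4.2].

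For the coupling terms we integrate by parts, with no boundary contribution because $\partial_\nu w=0$:
\[
2\int_\Omega |\nabla w|^{2p-2}\nabla w\cdot\nabla u=-2\int_\Omega u|\nabla w|^{2p-2}\Delta w-2\int_\Omega u\,\nabla|\nabla w|^{2p-2}\cdot\nabla w .
\]
We estimate these with $|\Delta w|\le\sqrt2|D^2w|$ and $|\nabla|\nabla w|^{2p-2}|\le (2p-2)|\nabla w|^{2p-3}|D^2w|$. This bounds both pieces by $c_p\int_\Omega u|\nabla w|^{2p-2}|D^2w|$. Young's inequality then gives
\[
c_p\int_\Omega u|\nabla w|^{2p-2}|D^2w|\le \frac12\int_\Omega|\nabla w|^{2p-2}|D^2w|^2+C\int_\Omega u^2|\nabla w|^{2p-2},
\]
and the first piece is absorbed by the retained Hessian term. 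We treat $v$ identically.

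Finally, multiplying through by $p$ and adding $\int_\Omega|\nabla w|^{2p}$ to both sides, we move the $-2p\int|\nabla w|^{2p}$ term and the boundary remainder into $C_3\int_\Omega|\nabla w|^{2p}$. The constant $C_1$ comes from half of $\frac{4(p-1)}{p}$, and $C_2$ comes from the Young step. All constants depend only on $p$ and $\Omega$, which yields the stated inequality.
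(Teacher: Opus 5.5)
Your proposal is correct and follows essentially the same route as the paper: the identity $2\nabla w\cdot\nabla\Delta w=\Delta|\nabla w|^2-2|D^2w|^2$, then the boundary bound $\partial_\nu|\nabla w|^2\le M|\nabla w|^2$ with a trace inequality and Young's inequality, then integration by parts of the coupling terms. The differences are minor: the paper bounds $\int_{\partial\Omega}|\nabla w|^{2p}$ with the more elementary embedding $W^{1,1}(\Omega)\hookrightarrow L^1(\partial\Omega)$ applied to $|\nabla w|^{2p}$, not a fractional trace plus interpolation; and it absorbs the coupling terms partly into $\int_\Omega|\nabla w|^{2p-2}(\Delta w)^2$ (after using $(\Delta w)^2\le 2|D^2w|^2$) and partly into the dissipation $\int_\Omega|\nabla|\nabla w|^p|^2$, whereas you absorb both pieces into the retained Hessian term.
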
 
\begin{proof}
We make use of the following pointwise identity
\[
\nabla w \cdot \nabla \Delta w = \frac{1}{2} \Delta \left( |\nabla w|^2 \right) - |D^2 w|^2
\]
to obtain
\begin{align}\label{eq:3.22}
\frac{1}{2p} \frac{d}{dt} \int_\Omega |\nabla w|^{2p} + \int_\Omega |\nabla w|^{2p} 
=&-c_1 \int_\Omega |\nabla|\nabla w|^p|^2 - \int_\Omega |\nabla w|^{2p-2} |D^2 w|^2 + \int_\Omega |\nabla w|^{2p-2} \nabla w \cdot \nabla u \notag\\
&+ \int_\Omega |\nabla w|^{2p-2} \nabla w \cdot \nabla v + c_2 \int_{\partial \Omega} \frac{\partial |\nabla w|^2}{\partial \nu} |\nabla w|^{2p-2},     
\end{align}
where \( c_1, c_2 \) are positive constants depending only on \( p \). The inequality (\text{see} \cite{mizoguchi2014nondegeneracy}[Lemma 4.2])
\[\frac{\partial |\nabla w|^2}{\partial \nu} \leq M |\nabla w|^2  \]
for some \( M > 0 \) depending only on \( \Omega \), implies that
\[
c_2 \int_{\partial \Omega} \frac{\partial |\nabla w|^2}{\partial \nu} |\nabla w|^{2p-2} \, dS 
\leq c_2 M \int_{\partial \Omega} |\nabla w|^{2p} \, dS.
\]
Let \( f := |\nabla w|^p \) and apply the trace embedding theorem \( W^{1,1}(\Omega) \hookrightarrow L^1(\partial \Omega) \) together with Young's inequality; there exist positive constants \( c_3 \) and \( c_4\) such that
\[
c_2 M \int_{\partial \Omega} f^2 \, dS 
\leq c_3 \int_\Omega f |\nabla f| + c_3 \int_\Omega f^2 
\leq \frac{c_1}{2} \int_\Omega  |\nabla f|^2 + c_4 \int_\Omega f^2.
\]
Therefore, we have
\[
c_2 M \int_{\partial \Omega} |\nabla w|^{2p} \, dS 
\leq \frac{c_1}{2} \int_\Omega |\nabla|\nabla w|^p|^2 + c_4 \int_\Omega |\nabla w|^{2p}.
\]
Applying the pointwise inequality \( (\Delta w)^2 \leq 2 |D^2 w|^2 \) to \eqref{eq:3.22} yields
\begin{align}\label{eq:3.23}
\frac{1}{2p} \frac{d}{dt} \int_\Omega |\nabla w|^{2p} + \int_\Omega |\nabla w|^{2p}
\leq& -\frac{c_1}{2}\int_\Omega |\nabla|\nabla w|^p|^2
- \frac{1}{2} \int_\Omega |\nabla w|^{2p-2} |\Delta  w|^2
+ \int_\Omega |\nabla w|^{2p-2} \nabla w \cdot \nabla u\notag \\& + \int_\Omega |\nabla w|^{2p-2} \nabla w \cdot \nabla v
+ c_4 \int_\Omega |\nabla w|^{2p}.   
\end{align}
By integration by parts and Youngs's inequalities, there exist constants \( c_5  > 0 \) and \( c_6  > 0 \) such that
\begin{align}\label{eq:3.24}
\int_\Omega |\nabla w|^{2p-2} \nabla w \cdot \nabla u \ 
&= -\int_\Omega u |\nabla w|^{2p-2} \Delta w \ - \frac{2p-2}{p} \int_\Omega u |\nabla w|^{p-1}\nabla|\nabla w|^p\cdot \frac{\nabla w}{|\nabla w|} \notag\\
&\leq \frac{1}{4} \int_\Omega (\Delta w)^2 |\nabla w|^{2p-2} + \frac{c_1}{5} \int_\Omega \left| \nabla |\nabla w|^p \right|^2 + c_6 \int_\Omega u^2 |\nabla w|^{2p-2},
\end{align}
\begin{align}\label{eq:3.25}
\int_\Omega |\nabla w|^{2p-2} \nabla w \cdot \nabla v \ 
&= -\int_\Omega v |\nabla w|^{2p-2} \Delta w \ -\frac{2p-2}{p}  \int_\Omega v |\nabla w|^{p-1} \nabla|\nabla w|^p \cdot\frac{\nabla w}{|\nabla w|} \notag\\
&\leq \frac{1}{4} \int_\Omega (\Delta w)^2 |\nabla w|^{2p-2} + \frac{c_1}{5} \int_\Omega \left| \nabla |\nabla w|^p \right|^2  + c_6 \int_\Omega v^2 |\nabla w|^{2p-2}.
\end{align}
From \eqref{eq:3.23}-\eqref{eq:3.25}, we finally prove this lemma.
\end{proof}
Now, we move on to derive the second key component necessary for proving the main result, as outlined in the lemma below.
\begin{lemma}\label{lem:3.7}
For \( p > 1 \) and \( 0 < q < p - 1 \), there exists \( C > 0 \) such that
\begin{align*}
\int_{\Omega} u^p +\int_{\Omega} v^p + \int_{\Omega} u^pw^{-q}+\int_{\Omega} v^pw^{-q}+\int_{\Omega} |\nabla w|^{2p}\leq C\quad \text{for all } t \in (0, T_{\text{max}}).   
\end{align*} 
\end{lemma}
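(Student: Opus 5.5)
We study the combined functional
\[
z(t):=\int_\Omega u^p w^{-q}+\int_\Omega v^p w^{-q}+\int_\Omega u^p+\int_\Omega v^p+\int_\Omega |\nabla w|^{2p},
\]
following the strategy of \cite{le2025absence}. The goal is a differential inequality $z'+z\le C$. Gronwall then gives the claim. The singular weight $w^{-q}$ is there to absorb the factor $w^{-k}$ in the taxis term into the diffusion, so no positive lower bound for $w$ is needed.

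\textbf{Step 1: the weighted integrals.} We compute $\frac{d}{dt}\int_\Omega u^pw^{-q}$ using the first and third equations of \eqref{eq:1.3}. Integration by parts produces a quadratic form in $(\nabla u, \nabla w)$ with weights $u^{p-2}w^{-q}$, $u^{p-1}w^{-q-1}$, $u^pw^{-q-2}$. The coefficients are $-p(p-1)$, $2pq$ and $-q(q+1)$ respectively. This form is negative definite exactly when $q^2<(p-1)q(q+1)/p$, i.e.\ $q<p-1$.

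The hypothesis $0<q<p-1$ therefore yields a dissipative term
\[
-\delta\int_\Omega u^{p-2}w^{-q}|\nabla u|^2-\delta\int_\Omega u^pw^{-q-2}|\nabla w|^2 .
\]
The taxis contributions carry weights $u^{p-1}w^{-q-k}\nabla u\cdot\nabla w$ and $u^pw^{-q-k-1}|\nabla w|^2$. Since $k<1$, we write $w^{-q-k-1}\le \eta w^{-q-2}+c_\eta w^{-q}$. After Young's inequality, these terms are absorbed except for remainders like $c\int_\Omega u^pw^{-q}|\nabla w|^2$.

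The reaction terms behave as follows. The term $-\int u^pw^{-q-1}(u+v)$ has a good sign. The term $+q\int u^pw^{-q}$ is harmless after adding $\int u^pw^{-q}$. The source $p\int u^{p-1}w^{1-q}$ is bounded using $\int u^{p+1}w^{-q}$-type Young splitting and $w^{1-q}\le$ powers controlled via Lemma \ref{lem:3.2}. The logistic term $-p\mu_1\int u^{p+1}w^{-q}$ is also good.

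For $v$ the same computation applies. The extra term $rp\int uv^pw^{-q}$ is split by Young into $\frac{\mu_2p}{2}\int v^{p+1}w^{-q}+c\int u^{p+1}w^{-q}$. This cross term is where the coupling enters, and it is the first place needing care.

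\textbf{Step 2: the unweighted integrals and $|\nabla w|^{2p}$.} For $\int u^p,\int v^p$ the standard testing gives
\[
-c\int_\Omega |\nabla u^{p/2}|^2+c\int_\Omega u^pw^{-2k}|\nabla w|^2 .
\]
We bound $w^{-2k}\le \eta w^{-q-2}\cdot(\text{stuff})$ only if $q$ permits; otherwise we use $u^pw^{-2k}\le \eta u^pw^{-q-2}+c u^p$ (valid since $2k<q+2$), so only $c\int u^p|\nabla w|^2$ remains. For the gradient term we invoke Lemma \ref{lem:3.6}, which leaves $C_2\int (u^2+v^2)|\nabla w|^{2p-2}$ and $C_3\int|\nabla w|^{2p}$.

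\textbf{Step 3 (main obstacle): closing the estimate.} All remaining terms are of the form
\[
\int_\Omega u^p|\nabla w|^2,\qquad \int_\Omega u^2|\nabla w|^{2p-2},\qquad \int_\Omega u^{p+1}w^{-q}\ \text{(from the coupling)},\qquad \int_\Omega|\nabla w|^{2p}.
\]
By Young's inequality, each is reduced to $\int u^{p+1}$ and $\int|\nabla w|^{2(p+1)}$.

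The first of these is handled by Lemma \ref{lem:2.4} with $m=p$, $\xi=0$, $\gamma=1$. Together with the $L\ln L$ bound of Lemma \ref{lem:3.5}, it gives $\int u^{p+1}\le\varepsilon\int|\nabla u^{p/2}|^2+C$.

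The gradient term is handled by the 2D Gagliardo–Nirenberg inequality applied to $f=|\nabla w|^p$ together with Lemma \ref{lem:3.3}. This gives $\int|\nabla w|^{2p+2}\le \varepsilon\int|\nabla|\nabla w|^p|^2\cdot(\text{bounded})+C$, since $\|\nabla w\|_{L^2}$ is bounded.

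The weighted coupling remainder $\int u^{p+1}w^{-q}$ cannot be controlled this way. Instead it must be absorbed by the logistic term $-\mu_1p\int u^{p+1}w^{-q}$. To arrange this, we put a large multiplier $A$ in front of the $v$-weighted integral relative to the $u$-weighted one, or choose the Young split appropriately. Checking that these weights can be chosen consistently is the delicate point.

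After all absorptions we obtain $z'+z\le C$, and hence the claimed bound.
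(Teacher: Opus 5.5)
Your overall route is the paper's. You use the same functional, the same observation that $0<q<p-1$ is exactly what makes the $(\nabla u,\nabla w)$ quadratic form dissipative, Lemma \ref{lem:3.6} for $\int_\Omega|\nabla w|^{2p}$, Lemma \ref{lem:2.4} with the $L\ln L$ bound for $\int_\Omega u^{p+1}$ and $\int_\Omega v^{p+1}$, and Gagliardo--Nirenberg with Lemma \ref{lem:3.3} for $\int_\Omega|\nabla w|^{2p+2}$.

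The gap is the step you flag yourself, the weighted coupling term $rp\int_\Omega uv^pw^{-q}$, and the remedy you sketch goes the wrong way. Suppose $\int_\Omega v^pw^{-q}$ carries a weight $A$. The bad term is then $A\,rp\int_\Omega uv^pw^{-q}$, and any Young splitting whose $v$-part is absorbable by $-A\mu_2p\int_\Omega v^{p+1}w^{-q}$ has the form $A\,rp\,uv^p\le\theta A\mu_2p\,v^{p+1}+A\,C_\theta u^{p+1}$ with $\theta<1$. Its $u$-part grows linearly in $A$, while $-\mu_1p\int_\Omega u^{p+1}w^{-q}$ stays fixed. So a large $A$ only makes matters worse; a small weight $\delta$ on $\int_\Omega v^pw^{-q}$ would work instead. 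With $A=1$, ``choosing the split appropriately'' also fails. The pointwise inequality $rp\,uv^p\le\frac{\mu_1p}{2}u^{p+1}+\frac{\mu_2p}{2}v^{p+1}$ holds only if $rp\le\frac{p+1}{2}(p\mu_1)^{\frac{1}{p+1}}\mu_2^{\frac{p}{p+1}}$, a smallness condition on $r$ that is absent from the lemma. This is exactly \eqref{eq:3.40}, which the paper asserts as a plain Young inequality. Your suspicion is therefore justified, but your proposal does not close the estimate either.

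The missing idea is that the logistic term is not needed here. The $w$-equation supplies the favourable terms $-q\int_\Omega u^{p+1}w^{-q-1}$ and $-q\int_\Omega uv^pw^{-q-1}$. For any $C>0$:
\[
C\,u^{p+1}w^{-q}\le\tfrac q4\,u^{p+1}w^{-q-1}\ \text{on } \{w<\tfrac{q}{4C}\},\qquad C\,u^{p+1}w^{-q}\le C\bigl(\tfrac{4C}{q}\bigr)^q u^{p+1}\ \text{on the complement}.
\]
The second piece joins the terms controlled by Lemma \ref{lem:2.4}. Hence your remainder $c\int_\Omega u^{p+1}w^{-q}$ is absorbed with no restriction on $r,\mu_1,\mu_2$. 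Alternatively, the same argument applied to the coupling term itself, using $-q\int_\Omega uv^pw^{-q-1}$, does the job.

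Three smaller points in Step 1 need attention.
\begin{enumerate}[(i)]
\item The source $p\int_\Omega u^{p-1}w^{1-q}$ and the zero-order term $(q+1)\int_\Omega u^pw^{-q}$ (which you call harmless) must be absorbed by $-q\int_\Omega u^{p+1}w^{-q-1}$. This leaves $c\int_\Omega w^{p-q}$ with $p-q>1$, bounded by Lemma \ref{lem:3.2}, as in \eqref{eq:3.32}--\eqref{eq:3.33}. Splitting against $u^{p+1}w^{-q}$, as you suggest, instead leaves $\int_\Omega w^{-q}$, respectively $\int_\Omega w^{(p+1-2q)/2}$. These are negative powers of $w$ (the latter once $q>\frac{p+1}{2}$), and nothing controls them since no lower bound for $w$ is available.
\item The remainder $c\int_\Omega u^pw^{-q}|\nabla w|^2$ you leave after the taxis terms does not appear in your Step 3 list. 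Split $w^{-q-2k}\le\eta w^{-q-2}+c_\eta$ directly (valid since $k<1$), as in \eqref{eq:3.27}, to land on $c_\eta\int_\Omega u^p|\nabla w|^2$.
\item The term $-pq\chi_1\int_\Omega u^pw^{-q-k-1}|\nabla w|^2$ has a good sign and needs no absorption.
\end{enumerate}
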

\begin{proof}
Differentiating the following functional and substituting \eqref{eq:1.3} yields
\begin{align*}
&\frac{d}{dt} \int_{\Omega} u^pw^{-q} +\frac{d}{dt} \int_{\Omega} v^pw^{-q} \\=& p \int_{\Omega} u^{p-1}w^{-q}u_t - q \int_{\Omega} u^pw^{-q-1}w_t +p \int_{\Omega} v^{p-1}w^{-q}v_t - q \int_{\Omega} v^pw^{-q-1}w_t\\
=&p \int_{\Omega} u^{p-1}w^{-q} \left(\Delta u - \chi_1 \nabla \cdot \left( \frac{u}{w^k} \nabla w \right) + w - \mu_1 u^2\right)- q \int_{\Omega} u^pw^{-q-1} (\Delta w +u+ v -w)\\&+p\int_{\Omega} v^{p-1}w^{-q}\left(\Delta v-\chi_2 \nabla \cdot \left(\frac{v}{w^k} \nabla w \right) + w +ruv-\mu_2 v^2 \right)-q \int_{\Omega} v^pw^{-q-1}(\Delta w+u+v-w). 
\end{align*}
By applying integration by parts, we can derive that
\begin{align*}
&\frac{d}{dt} \int_{\Omega} u^pw^{-q} +\frac{d}{dt} \int_{\Omega} v^pw^{-q} \\
= &-p(p-1) \int_{\Omega} u^{p-2}w^{-q}|\nabla u|^2 + 2pq \int_{\Omega} u^{p-1}w^{-q-1}\nabla u \cdot \nabla w + p(p-1)\chi_1 \int_{\Omega} u^{p-1}w^{-q-k}\nabla u \cdot \nabla w \\
& - pq\chi_1 \int_{\Omega} u^pw^{-q-k-1}|\nabla w|^2-q(q+1) \int_{\Omega} u^{p}w^{-q-2}|\nabla w|^2+ p\int_{\Omega} u^{p-1}w^{-q+1} - \mu_1 p \int_{\Omega} u^{p+1}w^{-q}\\& - q\int_{\Omega} u^{p+1}w^{-q-1}-q\int_{\Omega}u^pw^{-q-1}v+q\int_{\Omega}u^pw^{-q}-p(p-1) \int_{\Omega} v^{p-2}w^{-q}|\nabla v|^2 \\&+2pq \int_{\Omega} v^{p-1}w^{-q-1}\nabla v \cdot \nabla w  + p(p-1)\chi_2 \int_{\Omega} v^{p-1}w^{-q-k}\nabla v \cdot \nabla w - pq\chi_2 \int_{\Omega} v^pw^{-q-k-1}|\nabla w|^2 \\&- q(q+1) \int_{\Omega} v^{p}w^{-q-2}|\nabla w|^2 + p\int_{\Omega} v^{p-1}w^{-q+1} +rp\int_{\Omega}uv^pw^{-q}- \mu_2 p \int_{\Omega} v^{p+1}w^{-q}- q\int_{\Omega} v^{p+1}w^{-q-1}\\&-q\int_{\Omega}uv^pw^{-q-1}v+q\int_{\Omega}v^pw^{-q}
\end{align*}
and
\begin{align*}
&\frac{d}{dt} \int_{\Omega} u^p+\frac{d}{dt} \int_{\Omega} v^p \\=&p\int_{\Omega}u^{p-1}\left(\Delta u - \chi_1 \nabla \cdot \left( \frac{u}{w^k} \nabla w \right) + w - \mu_1 u^2\right)+p\int_{\Omega}v^{p-1}\left(\Delta v-\chi_2 \nabla \cdot \left(\frac{v}{w^k} \nabla w \right)+w+ruv-\mu_2 v^2\right )\\
=&-p(p-1)\int_{\Omega}u^{p-2}|\nabla u|^2+p(p-1)\chi_1\int_{\Omega}u^{p-1}w^{-k}\nabla u \cdot \nabla w+p\int_{\Omega}u^{p-1}(w-\mu_1 u^2)\\&-p(p-1)\int_{\Omega}v^{p-2}
|\nabla v|^2+p(p-1)\chi_2\int_{\Omega}v^{p-1}w^{-k}\nabla v \cdot \nabla w+p\int_{\Omega}v^{p-1}(w+ruv-\mu_2 v^2)\\
= &-\frac{4(p-1)}{p} \int_{\Omega} |\nabla u^{\frac{p}{2}}|^2 + 2\chi_1(p-1) \int_{\Omega} u^{\frac{p}{2}}w^{-k}\nabla u^{\frac{p}{2}} \cdot \nabla w +p \int_{\Omega} u^{p-1}w  - \mu_1 p \int_{\Omega} u^{p+1}\\& -\frac{4(p-1)}{p} \int_{\Omega} |\nabla v^{\frac{p}{2}}|^2 + 2\chi_2(p-1) \int_{\Omega} v^{\frac{p}{2}}w^{-k}\nabla v^{\frac{p}{2}} \cdot \nabla w  +p \int_{\Omega} v^{p-1}w+rp\int_{\Omega}uv^p- \mu_2 p \int_{\Omega} v^{p+1}. 
\end{align*}
Applying Young’s inequality with \(\varepsilon_1>0\), which will be determined later, implies that
\begin{align}\label{eq:3.26}
2pq\int_{\Omega} u^{p-1}w^{-q-1}\nabla u \cdot \nabla w 
\leq \bigl(p(p-1) - \varepsilon_1\bigr) \int_{\Omega} u^{p-2}w^{-q}|\nabla u|^2 + \frac{p^2q^2}{p(p-1) -\varepsilon_1} \int_{\Omega} u^{p}w^{-q-2}|\nabla w|^2,
\end{align}
\begin{align}\label{eq:3.27}
p(p-1)\chi_1 \int_{\Omega} u^{p-1}w^{-q-k}\nabla u \cdot \nabla w
&\leq \varepsilon_1 \int_{\Omega} u^{p-2}w^{-q}|\nabla u|^2 
+ \frac{p^2(p-1)^2\chi_1^2}{4\varepsilon_1} \int_{\Omega} u^{p}w^{-q-2k}|\nabla w|^2  \notag\\
&\leq \varepsilon_1 \int_{\Omega} u^{p-2}w^{-q}|\nabla u|^2 
+ \varepsilon_1 \int_{\Omega} u^{p}w^{-q-2}|\nabla w|^2 
+ c_1 \int_{\Omega} u^p|\nabla w|^2,
\end{align}
\begin{align}\label{eq:3.28}
2\chi_1(p-1)\int_{\Omega} u^{\frac{p}{2}}w^{-k}\nabla u^{\frac{p}{2}} \cdot \nabla w 
&\leq \frac{p-1}{p} \int_{\Omega} |\nabla u^{\frac{p}{2}}|^2 
+ p(p-1)\chi_1^2 \int_{\Omega} u^{p}w^{-2k}|\nabla w|^2 \notag\\
&\leq \frac{p-1}{p} \int_{\Omega} |\nabla u^{\frac{p}{2}}|^2 
+ \varepsilon_1 \int_{\Omega} u^{p}w^{-q-2}|\nabla w|^2 
+ c_2 \int_{\Omega} u^p|\nabla w|^2. 
\end{align}
Utilizing the same operational procedure, we see that
\begin{align}\label{eq:3.29}
2pq\int_{\Omega} v^{p-1}w^{-q-1}\nabla v \cdot \nabla w 
&\leq \bigl(p(p-1) - \varepsilon_1\bigr) \int_{\Omega} v^{p-2}w^{-q}|\nabla w|^2 + \frac{p^2q^2}{p(p-1) - \varepsilon_1} \int_{\Omega} v^{p}w^{-q-2}|\nabla w|^2,
\end{align}
\begin{align}\label{eq:3.30}
p(p-1)\chi_2 \int_{\Omega} v^{p-1}w^{-q-k}\nabla v \cdot \nabla w
&\leq \varepsilon_1 \int_{\Omega} v^{p-2}w^{-q}|\nabla v|^2 
+ \varepsilon_1 \int_{\Omega} v^{p}w^{-q-2}|\nabla w|^2 
+ c_1 \int_{\Omega} v^p|\nabla w|^2,
\end{align}
\begin{align}\label{eq:3.31}
2\chi_2(p-1)\int_{\Omega} v^{\frac{p}{2}}w^{-k}\nabla v^{\frac{p}{2}} \cdot \nabla w 
\leq \frac{p-1}{p} \int_{\Omega} |\nabla v^{\frac{p}{2}}|^2 
+ \varepsilon_1 \int_{\Omega} v^{p}w^{-q-2}|\nabla w|^2 
+ c_2 \int_{\Omega} v^p|\nabla w|^2,    
\end{align}
where \(c_1  > 0\) and \(c_2  > 0\).
Using Young’s inequality again and Lemma \ref{lem:3.2} entails that
\begin{align}\label{eq:3.32}
p \int_{\Omega} u^{p-1} w^{-q+1} &\leq \frac{q}{2} \int_{\Omega} u^{p+1} w^{-q-1} + c_3 \int_{\Omega} w^{p-q} \leq \frac{q}{2} \int_{\Omega} u^{p+1} w^{-q-1} + c_4,  
\end{align}
\begin{align}\label{eq:3.33}
(q+1) \int_{\Omega} u^p w^{-q} &\leq \frac{q}{2} \int_{\Omega} u^{p+1} w^{-q-1} + c_5 \int_{\Omega} w^{p-q} \leq \frac{q}{2} \int_{\Omega} u^{p+1} w^{-q-1} + c_6, 
\end{align}
\begin{align}\label{eq:3.34}
p \int_{\Omega} u^{p-1} w &\leq \frac{\mu_1 p}{3} \int_{\Omega} u^{p+1} + c_7 \int_{\Omega} w^{\frac{p+1}{2}} \leq \frac{\mu_1 p}{3} \int_{\Omega} u^{p+1} + c_8,  
\end{align}
\begin{align}\label{eq:3.35}
\int_{\Omega} u^p &\leq \frac{\mu_1 p}{3} \int_{\Omega} u^{p+1} + c_9.     
\end{align}
Similarly, by repeating the procedure, we can easily get
\begin{align}\label{eq:3.36}
p \int_{\Omega} v^{p-1} w^{-q+1} &\leq \frac{q}{2} \int_{\Omega} v^{p+1} w^{-q-1} + c_3 \int_{\Omega} w^{p-q} \leq \frac{q}{2} \int_{\Omega} v^{p+1} w^{-q-1} + c_4, 
 \end{align}
\begin{align}\label{eq:3.37}
 (q+1) \int_{\Omega} v^p w^{-q}\leq \frac{q}{2} \int_{\Omega} v^{p+1} w^{-q-1} + c_6,    
\end{align}
\begin{align}\label{eq:3.38}
p \int_{\Omega} v^{p-1} w  \leq \frac{\mu_2 p}{3} \int_{\Omega} v^{p+1} + c_8., 
\end{align}
\begin{align}\label{eq:3.39}
\int_{\Omega} v^p &\leq \frac{\mu_2 p}{3} \int_{\Omega} v^{p+1} + c_9.     
\end{align}
Applying Young’s inequality, we can figure out
\begin{align}\label{eq:3.40}
rp \int_{\Omega} uv^{p}w^{-q} &\leq \frac{\mu_1 p}{2} \int_{\Omega} u^{p+1} w^{-q} + \frac{\mu_2 p}{2} \int_{\Omega} v^{p+1} w^{-q},
\end{align}
\begin{align}\label{eq:3.41}
  rp \int_{\Omega} uv^p &\leq \frac{\mu_1 p}{3} \int_{\Omega} u^{p+1} + \frac{\mu_1 p}{3} \int_{\Omega} v^{p+1}.  
\end{align}
Collecting from \eqref{eq:3.26}-\eqref{eq:3.41} leads to
\begin{align}\label{eq:3.42}
&\frac{d}{dt} \int_{\Omega} u^p w^{-q} + \int_{\Omega} u^p w^{-q} +\frac{d}{dt} \int_{\Omega} v^p w^{-q} + \int_{\Omega} v^p w^{-q} + \frac{d}{dt} \int_{\Omega} u^p + \int_{\Omega} u^p+\frac{d}{dt} \int_{\Omega} v^p + \int_{\Omega} v^p\notag\\ 
\leq& \left( \frac{p^2 q^2}{p(p-1) - \varepsilon_1} + 2\varepsilon_1 - q(q+1) \right) \int_{\Omega} u^p w^{-q-2} |\nabla w|^2- \frac{3(p-1)}{p} \int_{\Omega} |\nabla u^{\frac{p}{2}}|^2\notag\\& +\left( \frac{p^2 q^2}{p(p-1) - \varepsilon_1} + 2\varepsilon_1 - q(q+1) \right) \int_{\Omega} v^p w^{-q-2} |\nabla w|^2- \frac{3(p-1)}{p} \int_{\Omega} |\nabla v^{\frac{p}{2}}|^2\notag\\& + c_{10} \int_{\Omega} u^p |\nabla w|^2 + c_{10} \int_{\Omega} v^p |\nabla w|^2  +c_{11}, 
\end{align}
where \( c_{10} =c_1 + c_2 \) and \( c_{11} = 2(c_4 + c_6 + c_8 + c_9) \). The condition \( 0 < q < p - 1 \) entails that \( \frac{p^2 q^2}{p(p-1)} < q(q+1) \), which allows us to choose \( \varepsilon_1 \) sufficiently small such that
\begin{align*}
\frac{p^2 q^2}{p(p-1) - \varepsilon_1} + 2\varepsilon_1 - q(q+1) \leq 0.   
\end{align*}
This, together with \eqref{eq:3.42} infers that
\begin{align*}
&\frac{d}{dt} \int_{\Omega} u^p w^{-q} + \int_{\Omega} u^p w^{-q} +\frac{d}{dt} \int_{\Omega} v^p w^{-q} + \int_{\Omega} v^p w^{-q} + \frac{d}{dt} \int_{\Omega} u^p + \int_{\Omega} u^p+\frac{d}{dt} \int_{\Omega} v^p + \int_{\Omega} v^p\\
\leq& -\frac{3(p-1)}{p} \int_{\Omega} |\nabla u^{\frac{p}{2}}|^2  -\frac{3(p-1)}{p} \int_{\Omega} |\nabla v^{\frac{p}{2}}|^2+ c_{10} \int_{\Omega} u^p |\nabla w|^2 +c_{10}\int_{\Omega} v^p |\nabla w|^2+ c_{11}.    
\end{align*}
Setting \( y(t) := \int_{\Omega} u^p w^{-q} + \int_{\Omega} v^p w^{-q}+\int_{\Omega} u^p +\int_{\Omega} v^p + \int_{\Omega} |\nabla w|^{2p} \) and applying Lemma \ref{lem:3.6} deduces that
\begin{align}\label{eq:3.43}
y'(t) + y(t) \leq& -\frac{3(p-1)}{p} \int_{\Omega} |\nabla u^{\frac{p}{2}}|^2 -\frac{3(p-1)}{p} \int_{\Omega} |\nabla v^{\frac{p}{2}}|^2- c_{12} \int_{\Omega} |\nabla| \nabla w|^p|^2 + c_{10} \int_{\Omega} u^p |\nabla w|^2\notag\\&+c_{10}\int_{\Omega} v^p |\nabla w|^2+ c_{13} \int_{\Omega} u^2 |\nabla w|^{2p-2} + c_{13} \int_{\Omega} v^2 |\nabla w|^{2p-2}+ c_{14} \int_{\Omega} |\nabla w|^{2p} + c_{11},     
\end{align}
where $c_{12}$, $c_{13}$, $c_{14}$ are positive constants depending only on \(p\). In light of Young’s inequality, we can obtain that
\begin{align}\label{eq:3.44}
&c_{10} \int_{\Omega} u^p |\nabla w|^2 +c_{10} \int_{\Omega} v^p |\nabla w|^2+ c_{13} \int_{\Omega} u^2 |\nabla w|^{2p-2} + c_{13} \int_{\Omega} v^2 |\nabla w|^{2p-2} + c_{14} \int_{\Omega} |\nabla w|^{2p} + c_{11}\notag\\
\leq& \varepsilon_2 \int_{\Omega} |\nabla w|^{2p+2} + c_{15} \int_{\Omega} u^{p+1} +c_{15} \int_{\Omega} v^{p+1} + c_{16},   
\end{align}
where \(\varepsilon_2 >0\) will be determined later, \(c_{15} > 0\) and \(c_{16}> 0\). Applying Gagliardo–Nirenberg interpolation inequality and Lemma \ref{lem:3.3} yields that
\begin{equation}
\begin{split}\label{eq:3.45}
\varepsilon_2 \int_{\Omega} |\nabla w|^{2p+2} &\leq \varepsilon_2 c_{17} \int_{\Omega} \bigl| \nabla |\nabla w|^p \bigr|^2 \int_{\Omega} |\nabla w|^2 + \varepsilon_2 c_{17} \left( \int_{\Omega} |\nabla w|^2 \right)^{\frac{p}{2}}\\
&\leq c_{12} \int_{\Omega} \bigl| \nabla |\nabla w|^p \bigr|^2 + c_{18},    
\end{split} 
\end{equation}
where \(c_{17} > 0\),  \(c_{18} > 0\) and 
\begin{align*}
\varepsilon_2 = \frac{c_{12}}{c_{17} \sup_{t \in (0, T_{\text{max}})} \int_{\Omega} |\nabla w(\cdot, t)|^2}.    
\end{align*}
Then, Lemma \ref{lem:3.5} asserts that
\[\sup_{t \in (0, T_{\text{max}})} \int_{\Omega} u \ln u < \infty  \quad and    \sup_{t \in (0, T_{\text{max}})} \int_{\Omega} v \ln v < \infty,\]
which allows us to apply Lemma \ref{lem:2.4} with
\begin{align*}
 \varepsilon_3 = \frac{p-1}{p c_{15} \sup_{t \in (0, T_{\text{max}})} \int_{\Omega} u \ln(u+e)}\quad and\quad   \varepsilon_4 = \frac{p-1}{p c_{15} \sup_{t \in (0, T_{\text{max}})} \int_{\Omega} v \ln(v+e)} 
\end{align*}
to obtain that
\begin{align}\label{eq:3.46}
c_{15} \int_{\Omega} u^{p+1} &\leq \varepsilon_3 c_{15} \int_{\Omega} |\nabla u^{\frac{p}{2}}|^2 \int_{\Omega} u \ln(u+e) + \varepsilon_3 c_{15} \left( \int_{\Omega} u \right)^p \int_{\Omega} u \ln(u+e) + c_{19}\notag\\
&\leq \frac{p-1}{p} \int_{\Omega} |\nabla u^{\frac{p}{2}}|^2 + c_{20},   
\end{align}
\begin{align}\label{eq:3.47}
c_{15} \int_{\Omega} v^{p+1} &\leq \varepsilon_4 c_{15} \int_{\Omega} |\nabla v^{\frac{p}{2}}|^2 \int_{\Omega} v \ln(v+e) + \varepsilon_4 c_{15} \left( \int_{\Omega} v \right)^p \int_{\Omega} v \ln(v+e) + c_{19}\notag\\
&\leq \frac{p-1}{p} \int_{\Omega} |\nabla v^{\frac{p}{2}}|^2 + c_{20},    
\end{align}
where \(c_{19} > 0\) and \(c_{20} > 0\). Collecting from (\ref{eq:3.43})-(\ref{eq:3.47}) implies that
\[
y'(t) + y(t) \leq c_{21}  \quad \text{for all } t \in (0, T_{\text{max}}),
\]where \(c_{21} = c_{11} + c_{16} + c_{18} + 2c_{20}\). Applying Gronwall's inequality to this entails that
\[
y(t) \leq \max \left\{ \int u_0^p+\int v_0^p + \int u_0^p w_0^{-q} + \int v_0^p w_0^{-q}+ \int |\nabla w_0|^{2p},\; c_{21} \right\} \quad \text{for all } t \in (0, T_{\text{max}}),
\]
which completes the proof.
\end{proof}
Drawing upon the implications of the lemma presented above, we now conclude this section by introducing the subsequent estimate.
\begin{lemma}\label{lem:3.8}
Let \( p > \max\left\{2, \frac{1}{1-k}\right\} \). Then there exists \( C > 0 \) such that
\[
\int_{\Omega} \frac{u^p(\cdot, t) \, |\nabla w(\cdot, t)|^p}{w^{kp}(\cdot, t)} \leq C\quad  \text{and}\quad 
\int_{\Omega} \frac{v^p(\cdot, t) \, |\nabla w(\cdot, t)|^p}{w^{kp}(\cdot, t)} \leq C\quad \text{for all t}  \in (0, T_{\text{max}}). 
\]
\end{lemma}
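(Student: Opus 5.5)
Throughout, write the integrand as a product of three factors and estimate it by Hölder's inequality. Each factor is then controlled by one of the bounds already available from Lemma \ref{lem:3.7}, whose freedom in choosing $p$ and $q$ is what makes the argument work. We treat $u$; the argument for $v$ is identical.

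Fix $p>\max\{2,\frac{1}{1-k}\}$. Since $kp<p-1$, we may choose $q$ with $kp<q<p-1$. We then look for exponents $P>p$ and $Q$ with $q<Q<P-1$ such that
\begin{align*}
\frac{u^p|\nabla w|^p}{w^{kp}} = \Bigl(u^{P}w^{-Q}\Bigr)^{\alpha}\cdot u^{p-\alpha P}\cdot |\nabla w|^{p}\cdot w^{\alpha Q-kp}.
\end{align*}
The simplest choice is $\alpha Q=kp$, which removes the power of $w$ altogether. Hölder's inequality with three exponents $\frac1{\alpha}$, $s$, $t$, where $\alpha+\frac1s+\frac1t=1$, then gives
\begin{align*}
\int_\Omega \frac{u^p|\nabla w|^p}{w^{kp}}\le \Bigl(\int_\Omega u^{P}w^{-Q}\Bigr)^{\alpha}\Bigl(\int_\Omega u^{(p-\alpha P)s}\Bigr)^{1/s}\Bigl(\int_\Omega|\nabla w|^{pt}\Bigr)^{1/t}.
\end{align*}
Each factor is bounded by Lemma \ref{lem:3.7}, applied at the appropriate exponents. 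That lemma bounds $\int u^{P}w^{-Q}$ whenever $0<Q<P-1$, bounds $\int u^{m}$ for every $m$, and bounds $\int|\nabla w|^{2m}$ for every $m>1$. So the last two factors are always finite, and the only constraint is the admissibility of $(P,Q)$.

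To fix the parameters, take $\alpha\in(0,1)$ small, say $\alpha=\frac12$, and $Q=kp/\alpha=2kp$. We then need $P>Q+1=2kp+1$ and $p-\alpha P\ge 0$ or at least some $L^m$ exponent. Negative powers of $u$ would be problematic, so we want $\alpha P\le p$, i.e. $P\le 2p$. Both requirements are met by some $P$ exactly when $2kp+1<2p$, i.e. $p>\frac{1}{2(1-k)}$, which holds by assumption. The remaining conjugate exponents $s,t$ just need $\frac1s+\frac1t=\frac12$, e.g. $s=t=4$. All integrals are then bounded uniformly in $t\in(0,T_{\max})$ by Lemma \ref{lem:3.7}.

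An alternative, if one prefers to avoid the extra factor of $u$, is to take $P=p/\alpha$ exactly. Then the middle factor disappears, and Hölder is used with exponents $\frac1\alpha$ and $\frac1{1-\alpha}$ on $|\nabla w|^p$, with the admissibility condition $\frac{kp}{\alpha}<\frac p\alpha-1$, i.e. $\alpha<p(1-k)$. This is satisfiable since $p(1-k)>1$.

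The main point to check is that Lemma \ref{lem:3.7} indeed yields bounds for the larger exponents $P$ (with their own $Q$), since its constant depends on $(p,q)$. Since that lemma holds for every $p>1$ and every $0<q<p-1$, this is legitimate. No initial-data obstruction arises, because $w_0$ may vanish only if the authors implicitly assume $\int u_0^Pw_0^{-Q}<\infty$. I would note this, and if needed restart from a positive time $t_0$, where $w>0$ by the strong maximum principle, to justify finiteness of the initial functional.
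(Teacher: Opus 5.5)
Your argument is correct, but it takes a different route from the paper. The paper feeds the $L^p$ bounds of Lemma \ref{lem:3.7} (for some $p>2$) into the parabolic regularity result Lemma \ref{lem:2.2} to get $\sup_t\|\nabla w(\cdot,t)\|_{L^\infty(\Omega)}<\infty$. It then pulls $\|\nabla w\|_{L^\infty(\Omega)}^p$ out of the integral and applies Lemma \ref{lem:3.7} at the \emph{same} exponent $p$ with $q=kp$; this choice is admissible exactly because $p>\frac{1}{1-k}$.

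You never need an $L^\infty$ bound on $\nabla w$. H\"older's inequality shifts the singular weight onto a weighted integral at larger exponents, most cleanly $P=p/\alpha$ and $Q=kp/\alpha$. Meanwhile $|\nabla w|^p$ is only needed in a finite $L^m$ norm, which Lemma \ref{lem:3.7} already supplies.

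The paper's version is a one-liner once Lemma \ref{lem:2.2} is invoked, and the $L^\infty$ gradient bound is needed anyway for Theorem \ref{thm:1.1}. Yours stays inside Lemma \ref{lem:3.7} and actually proves more: in your second variant any $\alpha<\min\{1,p(1-k)\}$ is admissible. So the estimate holds for every $p>0$, and the restriction $p>\max\{2,\frac{1}{1-k}\}$ is not needed for this lemma itself.

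The price is that you use Lemma \ref{lem:3.7} with weights $Q>kp$. Its Gronwall step hides a requirement on the initial data, finiteness of $\int_\Omega u_0^Pw_0^{-Q}$ when $w_0$ may vanish, and at larger $Q$ this is stronger than the one implicit in the paper's proof. Your closing paragraph garbles this point: the obstruction is real, and it affects both proofs. Your remedy, restarting at some $t_0>0$ where $w>0$ on $\bar\Omega$, is the right one. It yields the bound on $(t_0,T_{\max})$ rather than on all of $(0,T_{\max})$, which suffices for the later use in Theorem \ref{thm:1.1}.
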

\begin{proof}
From Lemma \ref{lem:3.7}, it follows that \( u,v \in L^\infty((0, T_{\text{max}})\); \(L^p(\Omega)) \) for some \( p > 2 \). This, in conjunction with  Minkowski's inequality \((||u+v||_{L^p (U)}\leq ||u||_{L^p (U)}+||v||_{L^p (U)}\) for \(1 \leq p \leq \infty\) and \(u\), \(v\in L^p (U)\)) and  Lemma \ref{lem:2.2} entails that
\[
\sup_{t \in (0, T_{\text{max}})} \|\nabla w(\cdot, t)\|_{L^\infty(\Omega)} = c_1 < \infty.
\]
Moreover, we notice that \( kp < p - 1 \) since \( p > \frac{1}{1-k} \), which together with Lemma \ref{lem:3.7} implies that
\[
\sup_{t \in (0, T_{\text{max}})} \int_\Omega \frac{u^p(\cdot, t)}{w^{kp}(\cdot, t)} = c_2 < \infty\quad \text{and}\quad 
\sup_{t \in (0, T_{\text{max}})} \int_\Omega \frac{v^p(\cdot, t)}{w^{kp}(\cdot, t)} = c_2 < \infty.
\]
Therefore, we can infer that
\[
\int_\Omega \frac{u^p(\cdot, t) |\nabla w(\cdot, t)|^p}{w^{kp}(\cdot, t)}
\leq \sup_{t \in (0, T_{\text{max}})} \|\nabla w(\cdot, t)\|_{L^\infty(\Omega)}^p \int_\Omega \frac{u^p(\cdot, t)}{w^{kp}(\cdot, t)}
\leq c_1^p c_2 \quad \text{for all } t \in (0, T_{\text{max}}),
\]
\[
\int_\Omega \frac{v^p(\cdot, t) |\nabla w(\cdot, t)|^p}{w^{kp}(\cdot, t)}
\leq \sup_{t \in (0, T_{\text{max}})} \|\nabla w(\cdot, t)\|_{L^\infty(\Omega)}^p \int_\Omega \frac{v^p(\cdot, t)}{w^{kp}(\cdot, t)}
\leq c_1^p c_2 \quad \text{for all } t \in (0, T_{\text{max}}),
\]
which proves the lemma.
\end{proof}
\begin{proof}[\textbf{Proof of Theorem \ref{thm:1.1}}] \quad 
By utilizing standard estimates pertaining to heat semigroups, we are now in a position to establish the uniform boundedness of solutions. We use a similar method in the proof of Theorem 0.1 in \cite{winkler2010boundedness}.
\begin{align}\label{eq:3.48}
u(\cdot, t) = e^{(t-t_0)\Delta}u(\cdot, t_0) + \int_{t_0}^t e^{(t-s)\Delta}\nabla \cdot \left( u\frac{\nabla w}{w^k} \right)(\cdot, s) \, ds + \int_{t_0}^t e^{(t-s)\Delta} (w-\mu_1u^2(\cdot, s)) \, ds, 
\end{align}
where \( t \in (0, T_{\text{max}}) \) and \( t_0 = \max \{ 0, t - 1 \} \). Applying standard \( L^p - L^q \) estimates for \( (e^{t\Delta})_{t \geq 0} \) (see \cite{winkler2010aggregation} [Lemma 1.3]) implies that
\begin{align}\label{eq:3.49}
\left\| e^{(t-t_0)\Delta}u(\cdot, t_0) \right\|_{L^\infty(\Omega)} \leq c_1 M(1 + (t - t_0)^{-1}),    
\end{align}
where \( c_1 > 0 \) and \( M \) is the constant in Lemma \ref{lem:3.1}. For \( q > \max \left\{ 2, \frac{1}{1-k} \right\} \) and \( p \in (q, \infty) \), applying Lemma \ref{lem:3.8} entails that
\begin{align}\label{eq:3.50}
\left\|\int_{t_0}^t e^{(t-s)\Delta}\nabla \cdot \left( u\frac{\nabla w}{w^k} \right)(\cdot, s) \, ds\right\|_{L^\infty(\Omega)} &\leq \int_{t_0}^t \left\| e^{(t-s)\Delta}\nabla \cdot \left( u\frac{\nabla w}{w^k} \right)(\cdot, s) \right\|_{L^\infty(\Omega)} \, ds\notag\\   
&\leq c_2 \int_{t_0}^t (t - s)^{-\frac{1}{p}} \left\| e^{\frac{t-s}{2}\Delta}\nabla \cdot \left( u\frac{\nabla w}{w^k} \right)(\cdot, s) \right\|_{L^p(\Omega)} \, ds\notag\\
&\leq c_3 \int_{t_0}^t (t - s)^{-\frac{1}{2} - \frac{1}{q}} \left\| u(\cdot, s) \frac{\nabla w(\cdot, s)}{w^k(\cdot, s)} \right\|_{L^q(\Omega)} \, ds\notag\\&
\leq c_4 (t - t_0)^{\frac{1}{2} - \frac{1}{q}}.
\end{align}
In addition, by utilizing Lemma \ref{lem:3.2} and Lemma \ref{lem:3.7}, we have
\begin{align}\label{eq:3.51}
\left\|\int_{t_0}^t e^{(t-s)\Delta} (w-\mu_1u^2) \, ds\right\|_{L^\infty(\Omega)} 
\leq c_5(t - t_0).    
\end{align}
Collecting from \eqref{eq:3.48}-\eqref{eq:3.51} yields
\begin{align*}
\|u(\cdot, t)\|_{L^\infty(\Omega)} &\leq c_6\bigl((t - t_0)^{-1}  + (t - t_0)^{-\frac{1}{2} - \frac{1}{q}}+(t - t_0)\bigr)\\
&\leq c_7(t^{-1} + 1)  \quad \text{for all } t \in (0, T_{\max}).    
\end{align*}
On the other hand, we have 
\begin{align*}
c_8rv - \mu_2 v^2 \leq \frac{(c_8r)^2}{4\mu_2}\quad \text{for all}\quad t \in (0, T_{\max}). 
\end{align*} 
Then we can get \(\|v\|_{L^{\infty}(\Omega)}\leq c_9\) by the same way, where \( c_9 > 0\). Consequently, one can get
\begin{align*}
\|u(\cdot, t)\|_{L^\infty(\Omega)} + \|v(\cdot, t)\|_{L^\infty(\Omega)} + \|w(\cdot, t)\|_{W^{1,\infty}(\Omega)} \leq C \quad \text{for all} \quad t \in (0, T_{\max}),  
\end{align*}
which further deduces that \(u\) is bounded in \(\Omega\times (0,T_{\text{max}})\) since \(u\) is bounded in  \(\Omega\times(0,\frac{T_{\text{max}} }{2})\)
 by
Lemma \ref{lem:2.1}. Therefore, by the extensibility property of the solutions as established in Lemma \ref{lem:2.1},
it follows that \(T_{\text{max}} =\infty\) and that \((u,v,w)\) is bounded in \(\Omega\times(0,\infty)\).
\end{proof}
\section{ Asymptoptic behavior}
\hspace*{\parindent}
In this section, based on the above established uniform boundedness, we prove that the positive global solutions of \eqref{eq:1.3} exponentially converge to the positive equilibrium.

\vspace{\baselineskip} 
\noindent\textit{4.1. Stabilization}
\vspace{\baselineskip} 

We construct a Lyapunov function, the form of which is inspired by \cite{lou2021role}. 
\begin{lemma}\label{lem:4.1}
Let the assumptions of Theorem \ref{thm:1.1} hold and let \( a > 0 \) be given in \eqref{eq:1.6}, \( K > 0 \). Assume that \( \mu_1, \mu_2 \) and \( r \) fulfill
\begin{align}\label{eq:4.1}
 \mu_1 < \mu_2 < 3\mu_1   
\end{align}
and
\begin{align}
r = \mu_2 - \mu_1.  \label{eq:4.2}   
\end{align}
Then there exists \( \chi_0 > 0 \) such that whenever \( \chi_1^2 + \chi_2^2 \leq \chi_0 \) and \( (u, v, w) \) is a global classical solution of the boundary value problem in \eqref{eq:1.3} such that \( u, v \) and \( w \) are positive in \( \Omega \times (0, \infty) \) with
\begin{align}\label{eq:4.3}
\|w(\cdot, t)\|_{L^\infty(\Omega)} \leq K \quad \text{for all } t > 0.   
\end{align}
For
\begin{align*}
\mathcal{F}(t) &:= \int_\Omega \left\{ u(\cdot, t) - u_* - u_* \ln \frac{u(\cdot, t)}{u_*} \right\} + \int_\Omega \left\{ v(\cdot, t) - v_* - v_* \ln \frac{v(\cdot, t)}{v_*} \right\}
\\&\quad+ 2 \int_\Omega \left\{ w(\cdot, t) - w_* - w_* \ln \frac{w(\cdot, t)}{w_*} \right\}  \quad \text{for all } t > 0
\end{align*}
and
\begin{align*}
\mathcal{E}(t) &:= \int_\Omega (u - u_*)^2 +  \int_\Omega (v - v_*)^2 \quad \text{for all } t > 0,   
\end{align*}
we can find some constants \( \varepsilon > 0 \) such that
\begin{align}\label{eq:4.4}
\frac{d}{dt} \mathcal{F}(t) \leq -\varepsilon\mathcal{E}(t) \quad \text{for all } t > 0,    
\end{align}
where \( (u_*, v_*, w_*) \) is defined in \eqref{eq:1.5}.
\end{lemma}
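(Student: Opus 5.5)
The plan is to differentiate $\mathcal F$ along the solution and split $\mathcal F'$ into a diffusion–taxis part and a reaction part. The reaction part is literally the same as in the model \eqref{eq:1.1} studied in \cite{lou2021role}, so the algebra there can be reused. Only the taxis contributions change, because the sensitivity is now $\chi_i w^{-k}$. That part is handled with the bound \eqref{eq:4.3} and the smallness of $\chi_1^2+\chi_2^2$.

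\emph{Step 1: the steady state.} First I note that under \eqref{eq:4.2} we have $r^2+4\mu_1\mu_2=(\mu_1+\mu_2)^2$. Hence $a=1$ in \eqref{eq:1.6}, so $u_*=v_*=\frac{2}{\mu_1}$ and $w_*=\frac{4}{\mu_1}=u_*+v_*$. One checks directly that
\begin{align*}
w_*-\mu_1u_*^2=0,\qquad w_*+ru_*v_*-\mu_2v_*^2=0,\qquad u_*+v_*-w_*=0.
\end{align*}

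\emph{Step 2: the gradient and taxis terms.} Testing the three equations of \eqref{eq:1.3} by $1-\frac{u_*}{u}$, $1-\frac{v_*}{v}$ and $2(1-\frac{w_*}{w})$, and integrating by parts with the Neumann conditions, gives
\begin{align*}
\frac{d}{dt}\mathcal F(t)=&-u_*\int_\Omega\frac{|\nabla u|^2}{u^2}+\chi_1u_*\int_\Omega\frac{\nabla u\cdot\nabla w}{u\,w^k}-v_*\int_\Omega\frac{|\nabla v|^2}{v^2}+\chi_2v_*\int_\Omega\frac{\nabla v\cdot\nabla w}{v\,w^k}\\
&-2w_*\int_\Omega\frac{|\nabla w|^2}{w^2}+J(t),
\end{align*}
where $J$ collects the zero-order terms. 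By Young's inequality,
\begin{align*}
\chi_1u_*\int_\Omega\frac{\nabla u\cdot\nabla w}{u\,w^k}\le u_*\int_\Omega\frac{|\nabla u|^2}{u^2}+\frac{\chi_1^2u_*}{4}\int_\Omega\frac{|\nabla w|^2}{w^{2k}},
\end{align*}
and similarly for the $v$ term. The key point is that $w^{-2k}=w^{-2}w^{2-2k}\le K^{2-2k}w^{-2}$ by \eqref{eq:4.3}, since $2-2k>0$. Because $u_*=v_*=w_*/2$, the taxis contributions are therefore bounded by $\frac{(\chi_1^2+\chi_2^2)K^{2-2k}w_*}{8}\int_\Omega\frac{|\nabla w|^2}{w^2}$. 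Choosing $\chi_0:=16K^{2k-2}$ makes this at most $2w_*\int_\Omega\frac{|\nabla w|^2}{w^2}$, so all gradient terms are nonpositive. This is the only place where the weak singularity enters, and it is where $\chi_0$ is allowed to depend on $K$.

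\emph{Step 3: the reaction terms.} Here
\begin{align*}
J=\int_\Omega\Bigl(1-\frac{u_*}{u}\Bigr)(w-\mu_1u^2)+\int_\Omega\Bigl(1-\frac{v_*}{v}\Bigr)(w+ruv-\mu_2v^2)+2\int_\Omega\Bigl(1-\frac{w_*}{w}\Bigr)(u+v-w).
\end{align*}
Using the identities of Step 1, I subtract the equilibrium values and write each term in the variables $U=u-u_*$, $V=v-v_*$, $W=w-w_*$. For instance,
\begin{align*}
w-\mu_1u^2=W-\mu_1U(u+u_*),\qquad u+v-w=U+V-W.
\end{align*}
I then follow the computation in \cite{lou2021role}, which carries over verbatim because the reaction terms are identical. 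The aim is to rewrite $J$ as minus the integral of a quadratic form in $(U,V,W)$, possibly with positive pointwise weights such as $\frac1u,\frac1v,\frac1w$. The hypothesis $\mu_1<\mu_2<3\mu_1$ is exactly what makes the $(U,V)$-part of this form positive definite. That yields $J\le-\varepsilon\int_\Omega(U^2+V^2)$, and any $W^2$ contribution is nonpositive and is simply dropped.

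The main obstacle I expect is this last step, namely verifying the definiteness of the quadratic form coming from $J$. The factors $1/u$, $1/v$ and $1/w$ are not bounded from above uniformly, and the mixed term $ruv$ couples $U$ and $V$. If the pointwise weights cannot be handled directly, I would try a Cauchy–Schwarz splitting of the cross terms, for example $\frac{UW}{u}$ against $\frac{W^2}{w}$ and $\mu_1U^2$. I would check that the coefficients close precisely when $\mu_2<3\mu_1$, using $r=\mu_2-\mu_1$ and $u_*=v_*$. This keeps the constant $\varepsilon$ depending only on $\mu_1$ and $\mu_2$, and it would give \eqref{eq:4.4}.
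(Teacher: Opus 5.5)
Your Steps 1 and 2 are correct. Step 2 is even slightly sharper than the paper: the paper uses $w^{-2k}\le w^{-2}+1$ and obtains $\chi_0=\frac{16}{1+K^2}$, whereas your $w^{-2k}\le K^{2-2k}w^{-2}$ gives $\chi_0=16K^{2k-2}$.

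The gap is Step 3. It is the real content of the lemma, you do not carry it out, and the mechanism you describe would fail. Treat $1/u$ and $1/w$ as frozen weights and $U,W$ as free variables, using $\mu_1u_*=2$ and half of the $-\frac{2W^2}{w}$ term. The $(U,W)$-block of $J$ is then
\[
-\frac{2}{u}U^2+\bigl(\frac1u+\frac2w\bigr)UW-\frac1wW^2 .
\]
Its discriminant is $\bigl(\frac1u+\frac2w\bigr)^2-\frac{8}{uw}=\bigl(\frac1u-\frac2w\bigr)^2\ge 0$. So the block is indefinite unless $w=2u$. Maximizing in $W$ leaves a surplus $\frac{(w-2u)^2}{4u^2w}U^2$, which $\mu_1U^2$ cannot absorb, because nothing in the hypotheses keeps $u$ away from $0$. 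Your fallback of splitting $\frac{UW}{u}$ against $\frac{W^2}{w}$ and $\mu_1U^2$ hits the same wall. It produces a term $\frac{wU^2}{u^2}$, which dominates both $\frac{2U^2}{u}$ and $\mu_1U^2$ as $u\to 0$.

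The missing idea is that the weights must not be decoupled from the deviations. The paper, following Lou--Tao, keeps two terms together:
\begin{itemize}
\item $-u_*\frac{w}{u}$, coming from $(1-\frac{u_*}{u})w$;
\item $-2w_*\frac{u}{w}$, coming from $2(1-\frac{w_*}{w})u$.
\end{itemize}
It then applies the pointwise AM--GM bound $u_*\frac{w}{u}+2w_*\frac{u}{w}\ge 2\sqrt{2u_*w_*}$, with equality exactly when $w=2u$, i.e.\ $\frac wu=\frac{w_*}{u_*}$. The same is done for $v$. Everything else is expanded around $(u_*,v_*)$. Because $a=1$ and the $w$-part of $\mathcal F$ carries the factor $2$, the following all vanish:
\begin{itemize}
\item the linear coefficients $2-\mu_1u_*$ and $2+ru_*-\mu_2v_*$;
\item the constant $\mu_1u_*^2+\mu_2v_*^2-ru_*v_*+2w_*-2\sqrt{2u_*w_*}-2\sqrt{2v_*w_*}$.
\end{itemize}
In your notation this is the exact identity
\[
J=-\mu_1\int_\Omega U^2-\mu_2\int_\Omega V^2+r\int_\Omega UV-u_*\int_\Omega\frac{(w-2u)^2}{uw}-v_*\int_\Omega\frac{(w-2v)^2}{vw}.
\]
The weighted squares are in $w-2u$ and $w-2v$, not in $U$ and $W$ separately, and there is no separate $W^2$ term to drop.

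Young's inequality $rUV\le\frac r2(U^2+V^2)$ then gives \eqref{eq:4.4} with $\varepsilon=\frac{3\mu_1-\mu_2}{2}$. Note that $\mu_2<3\mu_1$ is what this particular splitting needs. Positive definiteness of $\mu_1U^2+\mu_2V^2-rUV$ by itself would only require $r^2<4\mu_1\mu_2$.
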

\begin{proof}
Given \( K> 0 \), we let \( \chi_0:= \frac{16}{1+K^2} \) and suppose that \( \chi_1^2 + \chi_2^2 \leq \chi_0 \), and \( (u, v, w) \) is a global classical solution of the boundary value problem in \eqref{eq:1.3} fulfilling \( u, v, w > 0 \) in \( \bar{\Omega} \times (0, \infty) \) as well with \eqref{eq:4.3}. Then, straightforward calculations entail that
\begin{align}\label{eq:4.5}
\frac{d}{dt} \int_{\Omega} \left( u - u_* - u_* \ln \frac{u}{u_*} \right) &= \int_{\Omega} \left( 1 - \frac{u_*}{u} \right) u_t \notag\\  
&= -u_* \int_{\Omega} \frac{|\nabla u|^2}{u^2} + \chi_1 u_* \int_{\Omega}  \frac{\nabla u }{u}\cdot \frac{\nabla w}{w^k} + \int_{\Omega} \left( 1 - \frac{u_*}{u} \right) (w - \mu_1 u^2)
\end{align}
for all \( t > 0 \). We apply Young's inequality to estimate
\begin{align}\label{eq:4.6}
\chi_1 u_* \int_{\Omega} \frac{\nabla u}{u} \cdot \frac{\nabla w }{w^k}\leq u_* \int_{\Omega} \frac{|\nabla u|^2}{u^2} + \frac{u_* \chi_1^2}{4} \int_{\Omega} \frac{|\nabla w|^2 }{w^{2k}}\quad \text{for all } t > 0.    
\end{align}
Combing \eqref{eq:4.5} and \eqref{eq:4.6}, we have
\begin{align}\label{eq:4.7}
\frac{d}{dt} \int_{\Omega} \left( u - u_* - u_* \ln \frac{u}{u_*} \right) \leq \frac{u_* \chi_1^2}{4} \int_{\Omega} \frac{|\nabla w|^2}{w^{2k}} + \int_{\Omega} w - \mu_1 \int_{\Omega} u^2 - u_* \int_{\Omega} \frac{w}{u} + \mu_1 u_* \int_{\Omega} u   
\end{align}
for all \( t > 0 \). Similarly, it follows from the second and the third equations in \eqref{eq:1.3} that
\begin{align}\label{eq:4.8}
\frac{d}{dt} \int_{\Omega} \left( v - v_* - v_* \ln \frac{v}{v_*} \right) &\leq \frac{v_* \chi_2^2}{4} \int_{\Omega} \frac{|\nabla w|^2}{w^{2k}} + \int_{\Omega} w + r \int_{\Omega} uv - \mu_2 \int_{\Omega} v^2\notag\\&\quad
- v_* \int_{\Omega} \frac{w}{v} - rv_* \int_{\Omega} u + \mu_2 v_* \int_{\Omega} v \quad \text{for all } t > 0   
\end{align}
and
\begin{align}\label{eq:4.9}
\frac{d}{dt} \int_{\Omega} \left( w - w_* - w_* \ln \frac{w}{w_*} \right) 
&= -w_* \int_{\Omega} \frac{|\nabla w|^2}{w^2} + \int_{\Omega} u + \int_{\Omega} v - \int_{\Omega} w- w_* \int_{\Omega} \frac{u}{w} \notag\\&\quad - w_* \int_{\Omega} \frac{v}{w} + w_* \cdot |\Omega| \quad \text{for all } t > 0.   
\end{align}
Therefore, we infer from \eqref{eq:4.7}-\eqref{eq:4.9} that
\begin{align}\label{eq:4.10}
\frac{d}{dt} \mathcal{F}(t) &\leq \int_{\Omega} \left\{\frac{u_* \chi_1^2 + v_* \chi_2^2 }{4w^{2k}}- \frac{2w_*}{w^2}\right\} |\nabla w|^2\notag\\&\quad
-\mu_1 \int_{\Omega} (u - u_*)^2 + (2 - \mu_1 u_* - rv_*) \int_{\Omega} u + \mu_1 u_*^2 \cdot |\Omega|\notag\\&\quad
-\mu_2 \int_{\Omega} (v - v_*)^2 + (2 - \mu_2 v_*) \int_{\Omega} v + \mu_2 v_*^2 \cdot |\Omega|\notag\\&\quad
+ r \int_{\Omega} (u - u_*)(v - v_*) + rv_* \int_{\Omega} u + ru_* \int_{\Omega} v - ru_* v_* \cdot |\Omega|\notag\\&\quad
- u_* \int_{\Omega} \frac{w}{u} - 2w_* \int_{\Omega} \frac{u}{w} - v_* \int_{\Omega} \frac{w}{v} - 2w_* \int_{\Omega} \frac{v}{w} + 2w_* \cdot |\Omega| \quad \text{for all } t > 0.    
\end{align}
Since \(k\in(0,1)\), we can use interpolation to obtain a value between two known points. The specific steps are as follows
\begin{align}\label{eq:4.11}
\int_{\Omega} \left\{\frac{u_* \chi_1^2 + v_* \chi_2^2 }{4w^{2k}}- \frac{2w_*}{w^2}\right\} |\nabla w|^2 \leq \int_{\Omega} \left\{\frac{u_* \chi_1^2 + v_* \chi_2^2 }{4w^{2}}+\frac{u_* \chi_1^2 + v_* \chi_2^2 }{4}- \frac{2w_*}{w^2}\right\} |\nabla w|^2.
\end{align}
In light of Young's inequality, we can conclude that
\begin{align}\label{eq:4.12}
r \int_{\Omega} (u - u_*)(v - v_*) \leq \frac{r}{2} \int_{\Omega} (u - u_*)^2 + \frac{r}{2} \int_{\Omega} (v - v_*)^2 \quad \text{for all } t > 0,   
\end{align}
\begin{align}\label{eq:4.13}
- u_* \int_{\Omega} \frac{w}{u} - 2w_* \int_{\Omega} \frac{u}{w} \leq -2 \int_{\Omega} \sqrt{u_*\frac{w}{u} \cdot 2w_* \frac{u}{w}} = -2\sqrt{2u_* w_*} \cdot |\Omega| \quad \text{for all } t > 0,   
\end{align}
\begin{align}\label{eq:4.14}
- v_* \int_{\Omega} \frac{w}{v} - 2w_* \int_{\Omega} \frac{v}{w} \leq -2 \int_{\Omega} \sqrt{v_*\frac{w}{v} \cdot 2w_* \frac{v}{w}} = -2\sqrt{2v_* w_*} \cdot |\Omega| \quad \text{for all } t > 0.    
\end{align}
Hence, together with \eqref{eq:4.10}–\eqref{eq:4.14}, one can see that for all \( t > 0 \),
\begin{align}\label{eq:4.15}
\frac{d}{dt} \mathcal{F}(t) &\leq \int_{\Omega} \left\{\frac{u_* \chi_1^2 + v_* \chi_2^2 }{4w^{2}}+\frac{u_* \chi_1^2 + v_* \chi_2^2 }{4}- \frac{2w_*}{w^2}\right\} |\nabla w|^2\notag\\&\quad
- \left( \mu_1 - \frac{r}{2} \right) \int_{\Omega} (u - u_*)^2 - \left( \mu_2 - \frac{r}{2} \right) \int_{\Omega} (v - v_*)^2\notag\\&\quad
+ (2 - \mu_1 u_*) \int_{\Omega} u + (2 + ru_* - \mu_2 v_*) \int_{\Omega} v\notag\\&\quad
+ \{ (\mu_1 u_*^2 + \mu_2 v_*^2 - ru_* v_*) - 2(\sqrt{2u_*} w_* + \sqrt{2v_*} w_* - w_*) \} \cdot |\Omega|.    
\end{align}
Then, it follows from the definition of \( a \) in \eqref{eq:1.6} and \eqref{eq:4.2} that
\begin{align}\label{eq:4.16}
a = \frac{\mu_2 - \mu_1 + \sqrt{(\mu_2 - \mu_1)^2 + 4\mu_1\mu_2}}{2\mu_2} = 1.    
\end{align} 
By combining \eqref{eq:1.6} and \eqref{eq:4.16}, we can deduce that
\begin{align}\label{eq:4.17}
u_* = v_* = \frac{2}{\mu_1} \quad \text{and} \quad w_* = \frac{4}{\mu_1}.   
\end{align}
Also, by direct calculations, one can see
\begin{align}\label{eq:4.18}
2 - \mu_1 u_* = 2 - (1 + a) = 0,    
\end{align}
\begin{align}\label{eq:4.19}
2 + ru_* - \mu_2 v_* = 0,     
\end{align}
\begin{align}\label{eq:4.20}
\mu_1 u_*^2 + \mu_2 v_*^2 - ru_* v_* = (\mu_1 + \mu_2 - r) u_*^2 = 2\mu_1 \cdot \frac{4}{\mu_1^2} = \frac{8}{\mu_1}   
\end{align}
and
\begin{align}\label{eq:4.21}
2(\sqrt{2u_*} w_* + \sqrt{2v_*} w_* - w_*) = 2\left(2\sqrt{2 \cdot \frac{2}{\mu_1} \cdot \frac{4}{\mu_1}} - \frac{4}{\mu_1}\right) = \frac{8}{\mu_1}.    
\end{align}
From \eqref{eq:4.3} and the condition \( \chi_1^2 + \chi_2^2 \leq \chi_0 \), we have
\begin{align}\label{eq:4.22}
\left\{\frac{u_* \chi_1^2 + v_* \chi_2^2 }{4w^{2}}+\frac{u_* \chi_1^2 + v_* \chi_2^2 }{4}- \frac{2w_*}{w^2}\right\}&=\left\{\frac{u_* \chi_1^2 + v_* \chi_2^2 }{4}- \frac{8w_*-(u_*\chi_1^2+v_*\chi_2^2)}{4w^2}\right\} \notag\\
&\quad\leq \frac{\chi_1^2+\chi_2^2}{2\mu_1}  -\frac{16-(\chi_1^2+\chi_2^2)}{2\mu_1 K^2} \notag\\&\quad\leq 0.     
\end{align}
Furthermore,
\begin{align}\label{eq:4.23}
\mu_1 - \frac{r}{2} = \frac{3\mu_1 - \mu_2}{2} > 0 \quad \text{and} \quad \mu_2 - \frac{r}{2} = \frac{\mu_1 + \mu_2}{2} > 0     
\end{align}
implies that \eqref{eq:4.1}. Accordingly, we combine \eqref{eq:4.18}-\eqref{eq:4.23} to get \eqref{eq:4.4}.
\end{proof}
Before proving Theorem \ref{thm:1.2}(1), we shall show the following key property.
\begin{lemma}\textup{\cite{lou2021role}[Lemma 6.3]}\label{lem:4.2}
Let \( n \geq 1 \), and \( (u, v, w) \in (C^{2,1}(\bar{\Omega} \times (0, \infty)))^3 \) be a classical solution of \eqref{eq:1.3} in \( \Omega \times (0, \infty) \) for which \( u \geq 0 \), \( v \geq 0 \) and \( w \geq 0 \). And for which there exists \( K > 0 \) such that
\begin{align*}
\|u(\cdot, t)\|_{L^\infty(\Omega)} + \|v(\cdot, t)\|_{L^\infty(\Omega)} \leq K \quad \text{for all } t > 0.     
\end{align*}
Then there exist \( \theta \in (0, 1) \) and \( C > 0 \) such that
\[
\|u\|_{C^{\theta, \frac{\theta}{2}}(\bar{\Omega} \times [t, t+1])} + \|v\|_{C^{\theta, \frac{\theta}{2}}(\bar{\Omega} \times[t, t+1])} + \|w\|_{C^{1+\theta, \frac{1+\theta}{2}}(\bar{\Omega} \times [t, t+1])} \leq C \quad \text{for all } t > 1. 
\]
\end{lemma}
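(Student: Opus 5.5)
The plan is to treat each component separately with standard parabolic regularity theory. The uniform $L^\infty$ bound on $u$ and $v$ feeds first into the $w$-equation. The resulting gradient bound for $w$, together with the integrability from Lemma \ref{lem:3.8}, then feeds into the $u$- and $v$-equations written in divergence form. The key point is that we never need a pointwise positive lower bound for $w$: the singular factor $w^{-k}$ only enters through the flux, and Lemma \ref{lem:3.8} (in the bounded-solution setting of Theorem \ref{thm:1.1}) already controls that flux in $L^p$ for large $p$.

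\emph{Step 1: regularity of $w$.} The third equation reads $w_t=\Delta w-w+f$ with $f:=u+v$ and $\|f(\cdot,t)\|_{L^\infty(\Omega)}\le K$ for all $t>0$. Lemma \ref{lem:2.2} with $p>2$ gives $\sup_{t>0}\|w(\cdot,t)\|_{W^{1,\infty}(\Omega)}<\infty$. On each cylinder $\Omega\times(t-1,t+1)$ with $t>1$, we then use a smooth temporal cutoff $\zeta$ vanishing near $t-1$. Maximal Sobolev regularity applied to $\zeta w$ for the Neumann heat equation with bounded right-hand side gives $\zeta w\in W^{2,1}_q(\Omega\times(t-1,t+1))$ for every $q<\infty$, with bounds independent of $t$. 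Taking $q>n+2$, the embedding $W^{2,1}_q\hookrightarrow C^{1+\theta,\frac{1+\theta}{2}}$ yields the claimed estimate for $w$ on $\bar\Omega\times[t,t+1]$.

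\emph{Step 2: regularity of $u$ and $v$.} We write the first equation as
\[
u_t=\nabla\cdot\bigl(\nabla u-\mathbf a(x,t)\bigr)+b(x,t),\qquad \mathbf a:=\chi_1\frac{u\nabla w}{w^k},\quad b:=w-\mu_1u^2 .
\]
By the $L^\infty$ bounds on $u$ and $w$, $b$ is bounded. By Lemma \ref{lem:3.8}, $\sup_t\|\mathbf a(\cdot,t)\|_{L^p(\Omega)}<\infty$ for every large $p$, hence $\mathbf a\in L^p(\Omega\times(t-1,t+1))$ uniformly in $t$ with $p>n+2$. The Hölder regularity theorem for bounded weak solutions of such divergence-form equations with $L^p$ fluxes (Porzio--Vespri, interior and up to the Neumann boundary) then gives $\theta\in(0,1)$ and $C>0$, independent of $t$, with $\|u\|_{C^{\theta,\frac\theta2}(\bar\Omega\times[t,t+1])}\le C$. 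The same argument applies to $v$ with $b:=w+ruv-\mu_2v^2$, which is bounded. Taking the minimum of the finitely many Hölder exponents completes the proof.

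\emph{Main obstacle.} The delicate point is the possible degeneracy of $w^{-k}$ near $w=0$. The first idea, a uniform lower bound for $w$ via heat-kernel positivity, would require a lower bound on $\int_\Omega u$, and that is not evident without extra parameter conditions. I would therefore rely entirely on the $L^p$ control of the flux from Lemma \ref{lem:3.8}, with $p>\max\{2,\frac1{1-k},n+2\}$, which is exactly the integrability the Porzio--Vespri framework requires. A secondary check is that all constants are independent of $t$. This holds because every bound used is uniform on $(0,\infty)$ and the cylinders have fixed length.
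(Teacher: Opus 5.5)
Your argument is sound, but it is not the paper's route. The paper gives no proof and simply cites Lemma 6.3 of \cite{lou2021role}, which was proved for the non-singular system \eqref{eq:1.1}. There the flux $\chi_i u\nabla w$ is bounded as soon as $u$ and $\nabla w$ are. That H\"older step does not carry over directly to the flux $\chi_i u\nabla w/w^k$ of \eqref{eq:1.3}, because the paper establishes no positive lower bound for $w$.

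You keep the same skeleton: maximal regularity for $w$ driven by the bounded source $u+v$, then Porzio--Vespri for $u$ and $v$ in divergence form. What you add is the ingredient the citation glosses over, namely the uniform $L^\infty((0,\infty);L^p(\Omega))$ bound for $u\nabla w/w^k$ from Lemma \ref{lem:3.8}. This is the integrability that H\"older theory for bounded weak solutions needs when the flux is not bounded. The citation buys brevity and the advertised generality (any bounded classical solution, any $n\ge1$). Your route buys an actual justification for the singular sensitivity.

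The price is scope. Lemma \ref{lem:3.8} is proved only for $n=2$ and only for the solution of Theorem \ref{thm:1.1}, so your Step 2 covers just that solution. This is the only case the paper uses, but it is narrower than the statement as written. To recover the full statement, note that once $u$, $v$ and $\nabla w$ are bounded, the weighted estimate for $\int_\Omega u^pw^{-q}+\int_\Omega v^pw^{-q}$ from the proof of Lemma \ref{lem:3.7} closes in any dimension without Lemma \ref{lem:2.4}. The terms $\int_\Omega u^p|\nabla w|^2$ and $\int_\Omega v^p|\nabla w|^2$ are then trivially bounded. You must start that estimate at some $t_0\in(0,1)$ where $w(\cdot,t_0)>0$ on $\bar\Omega$ by the strong maximum principle. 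Starting at positive time also removes a tacit assumption you inherit from that proof: its Gronwall step needs $\int_\Omega u_0^pw_0^{-q}<\infty$, which \eqref{eq:1.4} does not guarantee when $w_0$ has zeros.
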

\begin{proof}[\textbf{Proof of Theorem \ref{thm:1.2}(1)}]\quad
Since \(\mathcal{F}\) is nonnegative due to the fact that \(s - 1 - \ln s \geq 0\) for all \(s > 0\). And since \(\mathcal{F}(1)\) is finite by positivity of \(u(\cdot,1)\), \(v(\cdot,1)\) and \(w(\cdot,1)\) in \(\bar{\Omega}\), integrating (\ref{eq:4.4}) in time shows that
\[
 c_1 \int_1^\infty \int_{\Omega} (u - u_*)^2 +c_2 \int_1^\infty \int_{\Omega} (v- v_*)^2 < \infty, 
\]
which due to \(c_1, c_2 > 0\) by our assumption, implies that
\begin{align}\label{eq:4.24}
\int_1^\infty \int_{\Omega} (u - u_*)^2 < \infty \quad    
and\quad
\int_1^\infty \int_{\Omega} (v - v_*)^2 < \infty.     
\end{align}
As both \( u - u_* \) and \( v - v_* \) are uniformly continuous in \( \Omega \times (1, \infty) \) thanks to Lemma \ref{lem:4.2} by a contradiction argument (cf. \cite{tao2019boundedness}, for instance), it follows from \eqref{eq:4.24} that
\[
u(\cdot, t) \to u_* \quad \text{and} \quad v(\cdot, t) \to v_* \quad \text{in } L^\infty(\Omega) \quad \text{as } t \to \infty.
\]
Finally, testing the third equation in \eqref{eq:1.3} against \( w - w_* \), using Young's inequality and relying on (\ref{eq:4.24}), we can easily find that
\[
\int_1^\infty \int_\Omega (w - w_*)^2 < \infty,
\]
it implies that
\[
w(\cdot, t) \to w_* \quad \text{in } L^\infty(\Omega) \quad \text{as } t \to \infty.
\]
This completes the proof. 
\end{proof}
\noindent\textit{4.2. Convergence rates}
\vspace{\baselineskip} 

In this subsection, we shall show the convergence rates of solutions in \(L^\infty\)-norm. Inspired by \cite{zhang2025globalexp}, we present the following lemma: 
\begin{lemma}\textup{\cite{2019Jinhaiyang}[Lemma 3.6] }\label{lem:4.3}
Let \(\Omega \subset \mathbb{R}^2\) be a bounded domain with smooth boundary and \((u, v, w)\) be a nonnegative bounded classical solution of system \eqref{eq:1.3}. And \(\chi_i(w) \) (\( i = 1, 2 \)) satisfy \(\bf(H1)\) and \(\bf(H2)\). Then for any \(q > 1\), there exists a constant \(C > 0\) independent of \(t\) such that
\begin{align*}
\| \nabla u(\cdot, t) \|_{L^{2q}(\Omega)} + \| \nabla v(\cdot, t) \|_{L^{2q}(\Omega)} \leq C, \quad t > 1.
\end{align*}
\end{lemma}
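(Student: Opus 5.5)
The plan is to derive a uniform-in-time bound for $\nabla u$ and $\nabla v$ in $L^{2q}(\Omega)$ from the $L^\infty$ bounds of Theorem \ref{thm:1.1}. We will combine a positive lower bound for $w$, parabolic regularity for $w$, and a semigroup (Duhamel) representation for $u$ and $v$. Assumption (H1) and (H2) enter only to make the sensitivity $\chi_i(w)=\chi_i w^{-k}$ and its derivative bounded along the solution.

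\textbf{Step 1 (lower bound for $w$).} Since $u_0\not\equiv0$, $u$ is positive for $t>0$, and $u_t\ge \Delta u-\chi_1\nabla\cdot(\frac{u}{w^k}\nabla w)-\mu_1u^2$. A direct pointwise lower bound for $u$ is awkward, so we will work with $w$ instead. From $w_t=\Delta w-w+u+v$ and the Neumann heat semigroup estimate $e^{t\Delta}f\ge \gamma\int_\Omega f$ for $t\ge 1$ in bounded smooth domains, we get $w(\cdot,t)\ge \gamma\int_{t-1}^{t-1/2}e^{-(t-s)}\int_\Omega (u+v)\,ds$. It then suffices to bound $\int_\Omega u$ from below uniformly. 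Integrating the $u$-equation gives $\frac{d}{dt}\int_\Omega u\ge \int_\Omega w-\mu_1\|u\|_{L^\infty}\int_\Omega u$, and $\frac{d}{dt}\int_\Omega w=\int_\Omega(u+v)-\int_\Omega w$. This linear cooperative system with bounded coefficients and nonzero initial mass yields $\int_\Omega u,\int_\Omega w\ge \delta>0$ for $t\ge1$. Hence $w\ge \eta>0$ on $\Omega\times(1,\infty)$. Combined with the $L^\infty$ upper bound, $\chi_i(w)$, $\chi_i'(w)$ and $\chi_i''(w)$ are then bounded there; this is where (H1) is used, while (H2) plays no further role once $w$ is bounded.

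\textbf{Step 2 (regularity of $w$).} Because $u+v\in L^\infty$, maximal Sobolev regularity applied on intervals $[t,t+2]$ with a cutoff in time gives $\sup_{t>1}\|w\|_{W^{2,r}}$ for every $r<\infty$, together with $\int_t^{t+1}\|w_t\|_{L^r}^r\le C$. Lemma \ref{lem:4.2} additionally gives $\nabla w\in C^{\theta,\theta/2}$ uniformly in time. We will rewrite the taxis term as
\[
\nabla\cdot(\chi(w)u\nabla w)=\chi(w)\nabla u\cdot\nabla w+\chi'(w)u|\nabla w|^2+\chi(w)u\Delta w.
\]

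\textbf{Step 3 (gradient of $u$).} We use the variation-of-constants formula on $(t_0,t)$ with $t_0=t-1\ge 1$, keeping the divergence form. For $2q>2$ and $r>2q$,
\[
\|\nabla u(t)\|_{L^{2q}}\le C\|\nabla e^{\Delta}u(t_0)\|_{L^{2q}}+C\int_{t_0}^t\bigl(1+(t-s)^{-\frac12-\frac1r+\frac1{2q}}\bigr)\|\nabla\cdot(\chi(w)u\nabla w)-w+\mu_1u^2\|_{L^{r}}\,ds .
\]
Expanding the divergence as in Step 2, the factor $\|\nabla u\|_{L^{2q}}$ appears multiplied by the bounded quantity $\|\chi(w)\nabla w\|_{L^\infty}$. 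The remaining terms are bounded by Step 2. The time kernel is integrable, so a singular Gronwall argument yields a uniform bound. We expect this step to be the main obstacle: the right-hand side contains $\|\nabla u\|_{L^r}$ with $r>2q$, so the exponents do not close directly.

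\textbf{Resolving the obstacle.} We first try to avoid expanding the divergence altogether. The semigroup bound $\|\nabla e^{\tau\Delta}\nabla\cdot F\|_{L^{2q}}\le C\tau^{-1}\|F\|$ is not integrable, so instead we use the Hölder bound: $F=\chi(w)u\nabla w$ is Hölder continuous in space by Lemma \ref{lem:4.2} and Step 1. Then $\|\nabla e^{\tau\Delta}\nabla\cdot F\|_{L^\infty}\le C\tau^{-1+\theta/2}\|F\|_{C^\theta}$, which is integrable, and the source terms are bounded. This gives $\nabla u\in L^\infty\subset L^{2q}$ uniformly for $t>2$, and the case $t\in(1,2]$ follows from local regularity. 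The function $v$ is handled identically, noting that $ruv$ is bounded and Hölder continuous.
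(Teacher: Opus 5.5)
Your Step 1 has a genuine gap, and everything downstream depends on it. Your comparison system $X'=Y-aX$, $Y'=X-Y$, with $a:=\mu_1\sup_t\|u(\cdot,t)\|_{L^\infty(\Omega)}$, is cooperative, so $\int_\Omega u\ge X$ and $\int_\Omega w\ge Y$ do hold. However, its principal eigenvalue is $\frac{-(a+1)+\sqrt{(a-1)^2+4}}{2}$, which is negative whenever $a>1$. In that case $X$ and $Y$ decay exponentially, and you only get $\int_\Omega u,\int_\Omega w\ge c\,e^{-\lambda t}$, not a uniform $\delta>0$.

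Moreover, $a>1$ is the typical situation. Where the lemma is used, $\mu_1u_*=2$, so $a\ge 2$. Adding the $v$-equation does not help: with $b:=\mu_2\sup_t\|v\|_{L^\infty}$, the $3\times 3$ comparison matrix is still stable as soon as $a(b-1)>b$.

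The loss happens when you replace $\mu_1\int_\Omega u^2$ by $\mu_1\|u\|_{L^\infty}\int_\Omega u$. This discards the fact that the logistic loss is quadratically small when $u$ is small, which is exactly what makes the trivial state unstable. Without $\inf_{\Omega\times(1,\infty)}w>0$, the factor $\chi_i(w)=\chi_iw^{-k}$ is not bounded along the solution. Then $F=\chi(w)u\nabla w$ need not be H\"older continuous, or even bounded, and your Step 3 cannot start.

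The step can be repaired by a persistence argument that keeps the quadratic structure.
\begin{itemize}
\item The uniform H\"older bound of Lemma \ref{lem:4.2} gives $\|u\|_{L^\infty}\le C\|u\|_{L^1}^{\beta}$ with $\beta=\theta/(2+\theta)$ once $\int_\Omega u\le 1$.
\item Hence $Z:=\int_\Omega u+\int_\Omega w$ satisfies $Z'=\int_\Omega u+\int_\Omega v-\mu_1\int_\Omega u^2\ge\int_\Omega u\,\bigl(1-C(\int_\Omega u)^{\beta}\bigr)\ge 0$ whenever $Z\le\varepsilon$. So $Z$ stays above a positive constant.
\item Your heat-kernel representation, now also keeping the term $e^{(\Delta-1)}w(\cdot,t-1)\ge e^{-1}\gamma\int_\Omega w(\cdot,t-1)$, together with $\bigl|\frac{d}{dt}\int_\Omega u\bigr|\le C$, then yields a uniform positive lower bound for $w$. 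On bounded time intervals, positivity and continuity of $w$ suffice.
\end{itemize}
Alternatively, under the hypotheses where the paper actually applies the lemma, Theorem \ref{thm:1.2}(1) already gives $w\to w_*=4/\mu_1$ uniformly.

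For comparison, the paper gives no proof at all. It imports Lemma 3.6 of its reference, whose hypotheses (H1)--(H2) say nothing about the singularity at $w=0$, so it tacitly relies on the very lower bound you correctly identified as necessary. Once that bound is in place, your Step 3 route via spatial H\"older continuity of $F$ is sound. Note, though, that the boundary part of the estimate $\|\nabla e^{\tau\Delta}\nabla\cdot F\|_{L^\infty}\le C\tau^{-1+\theta/2}\|F\|_{C^\theta}$ uses $F\cdot\nu=0$ on $\partial\Omega$, which holds here because $\partial_\nu w=0$.
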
 

The following lemma is the key to proving Theorem \ref{thm:1.2}(2). We shall use a perfect idea in \cite{Winkler2016Equilibration} [Lemma 3.7] to prove this lemma.
\begin{lemma}\label{lem:4.4}
Let the assumptions in Lemma \textup{\ref{lem:4.1}} be true. Then there exist two positive constants \(C_1\) and \(C_2\), such that
\begin{align*}
\|u(\cdot, t) - u_*\|_{L^2} + \|v(\cdot, t) - v_*\|_{L^2} + \|w(\cdot, t) - w_*\|_{L^2} \leq C_1 e^{-C_2t}  
\end{align*}
holds for all \(t > 0\).
\end{lemma}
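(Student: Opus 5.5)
The plan is to upgrade the dissipation inequality \eqref{eq:4.4} of Lemma \ref{lem:4.1} to a linear differential inequality for $\mathcal{F}$ itself, for all large times, and then integrate. Theorem \ref{thm:1.2}(1) gives $u\to u_*$, $v\to v_*$, $w\to w_*$ uniformly. Hence there is $t_3>1$ such that $\frac{u_*}{2}\le u\le 2u_*$, $\frac{v_*}{2}\le v\le 2v_*$ and $\frac{w_*}{2}\le w\le 2w_*$ in $\Omega\times(t_3,\infty)$.

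\textbf{Step 1: comparing $\mathcal{F}$ with quadratic quantities.} With $\varphi(\omega)=\omega-u_*\ln\omega$, Taylor's formula (or L'Hôpital, as announced in the introduction) gives $\varphi(\omega)-\varphi(u_*)=\frac{1}{2\xi^2}u_*(\omega-u_*)^2$ for some $\xi$ between $\omega$ and $u_*$. Once $u$ stays in $[\frac{u_*}{2},2u_*]$, this yields
\[\frac{1}{8u_*}\int_\Omega(u-u_*)^2\le\int_\Omega\Big(u-u_*-u_*\ln\frac{u}{u_*}\Big)\le\frac{2}{u_*}\int_\Omega(u-u_*)^2 \quad\text{for } t>t_3.\]
The analogous two-sided bounds hold for the $v$ and $w$ components. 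Thus $\mathcal{F}(t)$ is comparable to $\mathcal{E}(t)+\int_\Omega(w-w_*)^2$ for $t>t_3$.

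\textbf{Step 2: controlling the $w$-part, which I expect to be the main obstacle.} The dissipation $\mathcal{E}$ in \eqref{eq:4.4} contains no $w$-term, so $\int_\Omega(w-w_*)^2$ must be recovered from $u$ and $v$. First I would try to keep part of the negative gradient term in \eqref{eq:4.10}: by \eqref{eq:4.22} and the smallness of $\chi_1^2+\chi_2^2$, a multiple of $-\int_\Omega\frac{|\nabla w|^2}{w^2}\le -\frac{1}{4w_*^2}\int_\Omega|\nabla w|^2$ can be retained for $t>t_3$. This only controls $w-\overline{w}$ via Poincaré, not the mean. I would therefore also retain the terms $-u_*\int\frac{w}{u}-2w_*\int\frac{u}{w}+\dots$ before applying AM–GM in \eqref{eq:4.13}. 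Expanding them near the equilibrium gives a nonnegative quadratic form in $(u-u_*,v-v_*,w-w_*)$, which should be positive definite in the relevant combination.

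If that expansion gets messy, the fallback is to test the third equation against $w-w_*$, using $u_*+v_*=w_*$:
\[\frac12\frac{d}{dt}\int_\Omega(w-w_*)^2+\int_\Omega|\nabla w|^2+\frac12\int_\Omega(w-w_*)^2\le\int_\Omega(u-u_*)^2+\int_\Omega(v-v_*)^2 .\]
I would then work with $\mathcal{G}:=\mathcal{F}+\delta\int_\Omega(w-w_*)^2$, with $\delta$ small. For this functional, $\mathcal{G}'\le-(\varepsilon-2\delta)\mathcal{E}-\frac{\delta}{2}\int_\Omega(w-w_*)^2\le-\varepsilon_2\mathcal{G}$ for $t>t_3$, by Step 1.

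\textbf{Step 3: conclusion.} Gronwall gives $\mathcal{G}(t)\le\mathcal{G}(t_3)e^{-\varepsilon_2(t-t_3)}$ for $t>t_3$. By the lower bounds of Step 1, $\|u-u_*\|_{L^2}^2+\|v-v_*\|_{L^2}^2+\|w-w_*\|_{L^2}^2\le c\,e^{-\varepsilon_2 t}$ for $t>t_3$; taking square roots gives the rate $C_2=\varepsilon_2/2$. On $(0,t_3]$ the left-hand side is bounded by Theorem \ref{thm:1.1} and \eqref{eq:3.1}, so enlarging $C_1$ covers all $t>0$.
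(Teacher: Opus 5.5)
Your argument is correct, and where it departs from the paper it is more careful than the paper.

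\textbf{What the paper does.} It also compares $u-u_*-u_*\ln\frac{u}{u_*}$ with $(u-u_*)^2$, using L'H\^{o}pital instead of Taylor, which gives the constants $\frac{1}{4u_*}$, $\frac{1}{u_*}$ in \eqref{eq:4.25}--\eqref{eq:4.26}. It then asserts $\mathcal{F}(t)\le c_1\mathcal{E}(t)$ for $t>t_3$ using only the upper bounds for the $u$- and $v$-parts. It closes with $\mathcal{F}'\le-\frac{\varepsilon}{c_1}\mathcal{F}$ from \eqref{eq:4.4}.

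\textbf{The gap you identified.} That step overlooks the summand $2\int_\Omega\bigl(w-w_*-w_*\ln\frac{w}{w_*}\bigr)$ of $\mathcal{F}$, which $\mathcal{E}$ does not dominate: a state with $u\equiv u_*$, $v\equiv v_*$, $w\not\equiv w_*$ has $\mathcal{E}=0<\mathcal{F}$. The paper's concluding bound on $\int_\Omega(w-w_*)^2$ is likewise not derived.

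\textbf{Your fallback repairs it.} Testing the third equation against $w-w_*$, using $w_*=u_*+v_*$, gives $\frac{d}{dt}\int_\Omega(w-w_*)^2\le 2\mathcal{E}-\int_\Omega(w-w_*)^2$. So for $\delta<\varepsilon/2$ the functional $\mathcal{G}=\mathcal{F}+\delta\int_\Omega(w-w_*)^2$ has dissipation comparable to $\mathcal{G}$ itself once $t>t_3$. Your computation actually yields $-\delta$ rather than $-\frac{\delta}{2}$ in front of $\int_\Omega(w-w_*)^2$.

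\textbf{Your first idea.} It would also work. The discarded AM--GM remainder is
$u_*\frac{w}{u}+2w_*\frac{u}{w}-2\sqrt{2u_*w_*}=\bigl(\sqrt{u_*w/u}-\sqrt{2w_*u/w}\bigr)^2$.
Since $w_*=2u_*$, this is comparable to $\bigl(\frac{w}{u}-2\bigr)^2$ near equilibrium. Together with $\mathcal{E}$, it then controls $\int_\Omega(w-w_*)^2$ via $w-w_*=(w-2u)+2(u-u_*)$. As written, however, this route is only sketched, so present the $\mathcal{G}$-argument as the proof.

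Finally, your use of Theorem \ref{thm:1.1} on $(0,t_3]$ handles small times explicitly. The paper's proof only treats $t>t_3$.
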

\begin{proof}
We introduce the function \(\varphi(\omega) := \omega- u_* \ln \omega\) for \(\omega > 0\) and use the L'Hôpital's rule to derive
\begin{align*}
\lim_{\omega \to u_*} \frac{\varphi(\omega) - \varphi(u_*)}{(\omega - u_*)^2} = \lim_{\omega \to u_*} \frac{\varphi'(\omega)}{2(\omega - u_*)} = \frac{1}{2u_*}.   
\end{align*}
Since Theorem \ref{thm:1.2}(1) shows that  \(\|u(\cdot, t) - u_*\|_{L^\infty} \to 0\) as \(t \to \infty\), then we can select \(t_1 > 0\) such that for all \(t > t_1\)
\begin{align}\label{eq:4.25}
\frac{1}{4u_*} \int_{\Omega} (u - u_*)^2\leq\int_{\Omega} \left( u - u_* - u_* \ln \frac{u}{u_*} \right) = \int_{\Omega} (\varphi(u) - \varphi(u_*)) \le \frac{1}{u_*} \int_{\Omega} (u - u_*)^2.    
\end{align}
Similarly, the fact \(\|v(\cdot, t) - v_*\|_{L^\infty} \to 0\) as \(t \to \infty\) implies that there exists \(t_2 > 0\) such that for all \(t > t_2\)
\begin{align}\label{eq:4.26}
\frac{1}{4v_*} \int_{\Omega} (v - v_*)^2 \leq \int_{\Omega} \left( v - v_* - v_* \ln \frac{v}{v_*} \right) \leq \frac{1}{v_*} \int_{\Omega} (v - v_*)^2.   
\end{align}
Let \(t_3 =\text{max}\{1, t_1, t_2\}\). According to the definitions of \( \mathcal{F}(t) \) and \( \mathcal{E}(t) \) in Lemma \ref{lem:4.1}, the right inequalities of \eqref{eq:4.25} and \eqref{eq:4.26} indicate that \( \mathcal{F}(t)\leq c_1\mathcal{E}(t) \) for some \( c_1 > 0 \) and any \( t > t_3 \). Combining this with the energy inequality \eqref{eq:4.4}, one has
\begin{align}\label{eq:4.27}
\frac{d}{dt} \mathcal{F}(t) \leq -\varepsilon \mathcal{E}(t) \leq -\frac{\varepsilon}{c_1} \mathcal{F}(t) \qquad \text{for all}\quad t > t_3.   \end{align}
It follows from Gronwall’s inequality to \eqref{eq:4.27} and the boundedness of \( \mathcal{F}(t_3) \) that there exist \( c_2 > 0 \) and \( c_3 > 0 \) fulfilling
\begin{align}
\mathcal{F}(t) \leq c_2 e^{-c_3 t} \qquad \text{for all}\quad t > t_3.   
\end{align}
which combined with the left inequality of \eqref{eq:4.25} and  \eqref{eq:4.26}  gives
\begin{align*}
\int_{\Omega} (u - u_*)^2 + \int_{\Omega} (v - v_*)^2 + \int_{\Omega} (w - w_*)^2 \leq c_4 e^{-c_3 t} \qquad \text{for all}\quad t > t_3.    
\end{align*}
\end{proof}
\begin{proof}[\textbf{Proof of Theorem \ref{thm:1.2}(2)}]\quad
Let \( q = 2 \) in Lemma \ref{lem:4.3} and by the boundedness of \( \|w\|_{W^{1,\infty}(\Omega)} \) in Theorem \ref{thm:1.1}, we see that
\begin{align*}
\|u(\cdot, t)\|_{W^{1,4}(\Omega)} + \|v(\cdot, t)\|_{W^{1,4}(\Omega)} + \|w(\cdot, t)\|_{W^{1,4}(\Omega)} \leq c_1 \qquad \text{for all}\quad  t > t_3,   
\end{align*}
where \(t_3\) defined in Lemma \eqref{lem:4.4}. Hence, applying the Gagliardo–Nirenberg inequality to \( u - u^* \) yields
\begin{align}\label{eq:4.29}
\|u - u_*\|_{L^\infty(\Omega)} \leq c_2 \left( \|\nabla u\|_{L^4(\Omega)}^{\frac{2}{3}} \|u - u_*\|_{L^2(\Omega)}^{\frac{1}{3}} + \|u - u_*\|_{L^2(\Omega)}^{1} \right) \leq c_3 \|u - u_*\|_{L^2(\Omega)}^{\frac{1}{3}}.    
\end{align}
Similarly, one can obtain
\begin{align}\label{eq:4.30}
\|v - v_*\|_{L^\infty(\Omega)} \leq c_4 \|v - v_*\|_{L^2(\Omega)}^{\frac{1}{3}} \quad \text{and} \quad \|w - w_*\|_{L^\infty(\Omega)} \leq c_5 \|w - w_*\|_{L^2(\Omega)}^{\frac{1}{3}}.   
\end{align}
The combination of \eqref{eq:4.29} and \eqref{eq:4.30} and Lemma \ref{lem:4.4} shows
\begin{align*}
\|u(\cdot, t) - u_*\|_{L^\infty(\Omega)} + \|v(\cdot, t) - v_*\|_{L^\infty(\Omega)} + \|w(\cdot, t) - w_*\|_{L^\infty(\Omega)} \leq c_6 e^{-\frac{c_2}{6}t},
\end{align*}
where \(c_2\) is given by Lemma \ref{lem:4.4}. Choosing \( \lambda = \frac{c_2}{6} \) for all \( t > t_3 \). Moreover, it is valid for all \( t > 0 \) by picking the constant \( c_6 > 0 \) sufficiently large. The proof of Theorem \ref{thm:1.2}(2) is completed. 
\end{proof}

\section{Numerical simulation}
\hspace*{\parindent}
To reveal the spatiotemporal dynamics of the AA system \eqref{eq:1.3}, we illustrate some of our theoretical results by numerical calculations. To this end, we restrict ourselves to the two‑dimensional rectangular domain \(\Omega = [0, 2\pi] \times [0, 2\pi]\) with a uniform Cartesian grid, following the idea of \cite{zhang2025globalexp}. The spatial step sizes are taken as \(\Delta x = \Delta y = 0.5\), yielding \(13\times13\) grid points. The spatial derivatives of system \eqref{eq:1.3} are discretized using second‑order finite differences: the Laplacian is approximated by the standard five‑point stencil, while the chemotaxis term \(-\nabla \cdot (u \chi(w) \nabla w)\) is treated by a finite‑volume method that first computes fluxes on cell faces and then assembles the divergence, thereby guaranteeing mass conservation. Neumann (zero‑flux) boundary conditions are imposed consistently for all variables.

Throughout our numerical experiments, we employ the explicit Euler method for time integration with a fixed time step \(\Delta t = 0.01\), a value that satisfies the stability condition of the diffusive part. At each time step, we simultaneously update the three unknowns $u$, $v$ and $w$ by evaluating all spatial operators (diffusion, chemotaxis, and reaction) from the current state. A positivity‑preserving cutoff \(\max(\cdot, 10^{-6})\) is applied after each update to avoid non‑physical negative densities. The initial condition is a constant state \((u_0\), \(v_0\), \(w_0\)) = (\(2.5\), \(2.5\), \(5.0\)) plus a small normally distributed perturbation \(\mathcal{N}\sim(0, 0.2^2)\), which mimics a state far from the positive equilibrium. To monitor the spatio‑temporal evolution, we record the solution $u$ (since the distributions of the three variables are similar) at the four time instants  \(t=10\), \(20\), \(60\), \(1000\) and \(t=10\), \(20\), \(60\), \(1600\). For visualization, a three‑dimensional surface plot is generated after a seven‑fold linear interpolation of the coarse grid data, while a global colormap is set by the 1st–99th percentiles of the first three snapshots to allow direct comparison between different times.

Case 1 : \(k=0.8\)\quad  \(\mu_1=0.8 \quad \mu_2=0.9  \quad r=0.1.\)\quad 
Then \(u_0=2.5 \quad v_0=2.5 \quad w_0=5.\) 

Case 2 : \(k=1\)  \quad   \(\mu_1=0.8 \quad \mu_2=0.9  \quad r=0.1.\)\quad 
Then \(u_0=2.5 \quad v_0=2.5 \quad w_0=5.\) \label{Case 2} 

\begin{figure}[H]
	\centering
	\includegraphics[width=\linewidth]{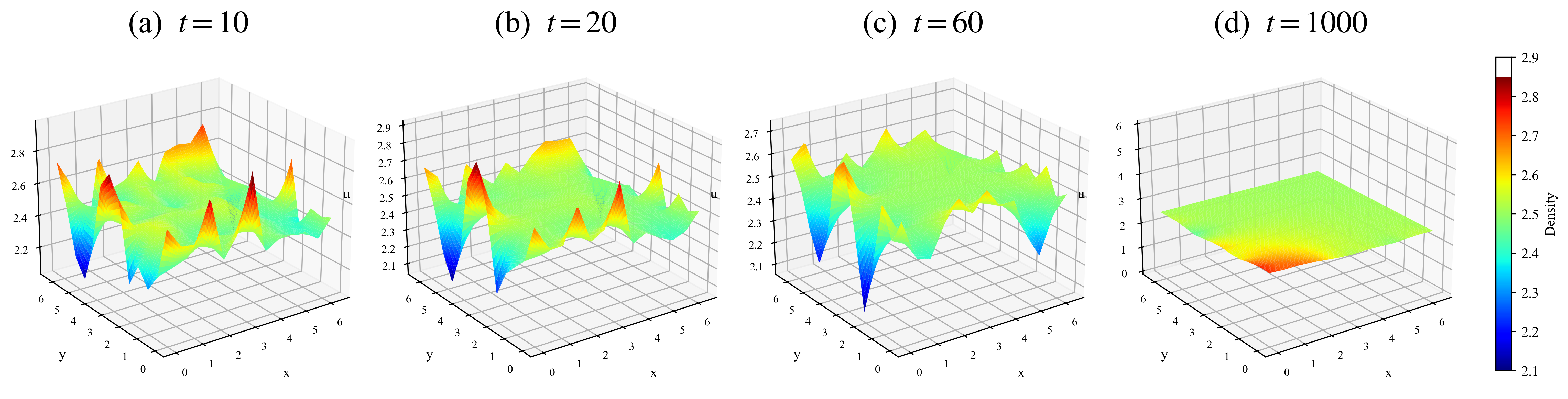}
    \caption{Spatio-temporal evolution and Turing instability of \(u(\cdot,x,y)\) for chemotaxis system (\ref{eq:1.3}) with parameter set Case 1 and \((u_0,v_0,w_0) = (2.5,2.5,5)+ \mathcal{N}\sim(0, 0.2^2)\) on \(\Omega = [0,2\pi]\times [0,2\pi]\).}
\end{figure}

\begin{figure}[H]
	\centering
	\includegraphics[width=\linewidth]{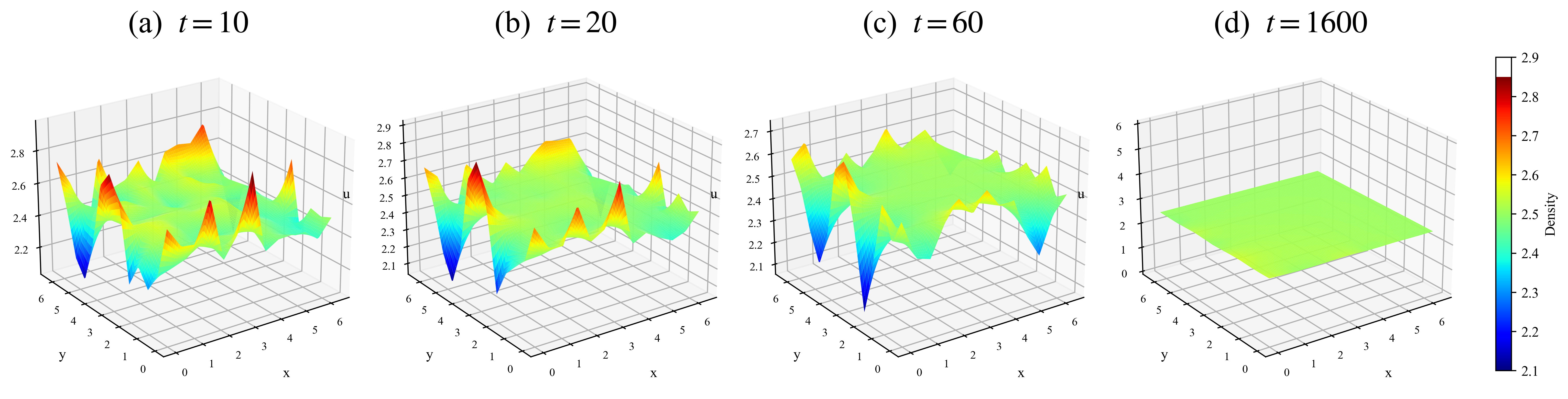}
    \caption{Spatio-temporal evolution and Turing instability of \(u(\cdot,x,y)\) for chemotaxis system (\ref{eq:1.3}) with parameter set Case 1 and \((u_0,v_0,w_0) = (2.5,2.5,5)+ \mathcal{N}\sim(0, 0.2^2)\) on \(\Omega = [0,2\pi]\times [0,2\pi]\).}
\end{figure}

\begin{figure}[H]
	\centering
	\includegraphics[width=\linewidth]{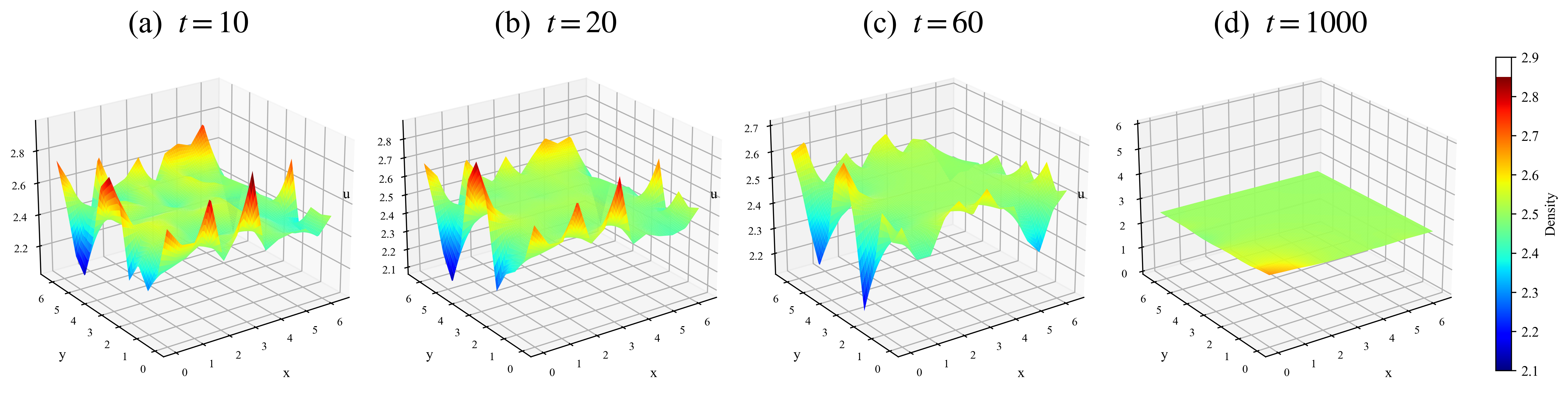}
    \caption{Spatio-temporal evolution and Turing instability of \(u(\cdot,x,y)\) for chemotaxis system (\ref{eq:1.3}) with parameter set Case 2  and \((u_0,v_0,w_0) = (2.5, 2.5,5)+ \mathcal{N}\sim(0, 0.2^2)\) on \(\Omega =[0,2\pi]\times [0,2\pi]\).}
\end{figure}
\begin{figure}[H]
	\centering
	\includegraphics[width=\linewidth]{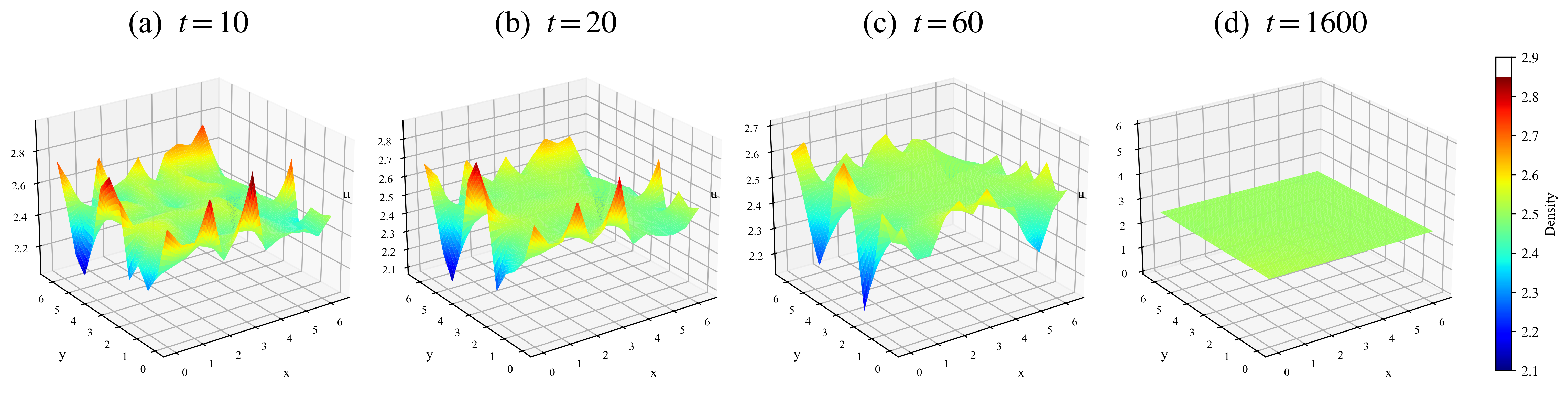}
     \caption{Spatio-temporal evolution and Turing instability of \(u(\cdot,x,y)\) for chemotaxis system (\ref{eq:1.3}) with parameter set Case 2  and \((u_0,v_0,w_0) = (2.5, 2.5,5)+ \mathcal{N}\sim(0, 0.2^2)\) on \(\Omega = [0,2\pi]\times [0,2\pi]\).}
\end{figure}

Considering a reaction-diffusion-coupling system, when initiated from initial random perturbations, its evolutionary process can be described through the following sequential stages: the waves diminish in intensity, followed by the merging of peaks and valleys, then a transition towards a quasi-uniform state, and finally the system attains an asymptotic steady state. Ultimately, the system trends towards a spatially uniform distribution (or an approximately planar distribution). At this stage, the combined net effects of reaction, diffusion, and coupling mechanisms equal zero, and the system achieves a homogeneous steady state. This behavior confirms the asymptotic stability of the system under specific parameter balancing and highlights the synergistic equilibrium between diffusion transport and nonlinear reaction.
Furthermore, when the chemotaxis index \(0<k<1\), the convergence rate is significantly slowed down, and the system requires a longer time to reach the steady state.

\section*{Acknowledgement}
\hspace*{\parindent}
The second author is supported by National Natural Science Foundation of China (No.12561039), and Corps Science and Technology Plan Project (No.2023CB008-13). The third author is supported by Launch Project of High-Level Talent Scientific Research of Shihezi University (No.RCZK202411), and Corps Science and Technology Plan Project ( No.2025DA049).
\vspace{\baselineskip} 
\section*{Conflict of Interest} 
\hspace*{\parindent}
The authors declare that this work does not have any conflicts of interest.

\end{document}